\newcommand{\showcomments}{yes}
\renewcommand{\showcomments}{no}
\newsavebox{\commentbox}
\newenvironment{com}%
\newcommand{\Z}{\mathbb Z}
\DeclareMathOperator{\Art}{Art}
\DeclareMathOperator{\rank}{rk}
\newcommand{\euler}{\chi}
\theoremstyle{definition}
\newtheorem{thm}{Theorem}[section]
\newtheorem{lem}[thm]{Lemma}
\newtheorem{prop}[thm]{Proposition}
\newtheorem{exa}[thm]{Example}
\newtheorem*{question}{Question}
\newtheorem{remark}[thm]{Remark}
\newtheorem{defn}[thm]{Definition}
\newtheorem{cor}[thm]{Corollary}
\newtheorem{obs}[thm]{Observation}
\newtheorem{desc}[thm]{Description}
\newtheorem{thmx}{Theorem}
\author{Kasia Jankiewicz}
\address{Department of Mathematics, University of Chicago, Chicago, Illinois, 60637}
\email{kasia@math.uchicago.edu}
\title{Residual finiteness of certain $2$-dimensional Artin groups}
\begin{document}
\begin{com}
{\bf \normalsize COMMENTS\\}
ARE\\
SHOWING!\\
\end{com}

\begin{abstract} 
We show that many $2$-dimensional Artin groups are residually finite. 
This includes $3$-generator Artin groups with labels $\geq 4$ except for $(2m+1, 4,4)$ for any $m\geq 2$. 
As a first step towards residual finiteness we show that 
these Artin groups, and many more, split as free products with amalgamation 
or HNN extensions of finite rank free groups. Among others, this holds for all large type Artin groups with defining graph admitting an orientation, where each simple cycle is directed.
\end{abstract}

\subjclass[2010]{20F36, 20F65, 20E26}
\keywords{Artin groups, Residual finiteness, Graphs of free groups}

\maketitle

A group $G$ is \emph{residually finite} if for every $g\in G-\{1\}$ 
there exists a finite quotient $\phi:G\to \bar G$ such that
$\phi(g)\neq 1$. 
The main goal of this paper is to extend the list of Artin groups known to be residually finite. 
Let $\Gamma$ be a simple graph 
where each pair of vertices $a,b$ in $\Gamma$ is labelled by an integer $M_{ab}\geq 2$.
The associated \emph{Artin group} 
\[\Art_{\Gamma} = \langle a\in V(\Gamma)\mid (a,b)_{M_{ab}} = (b,a)_{M_{ab}} \text{ for }a,b\text{ joined by an edge}\rangle.\]
By $(a,b)_{M_{ab} }$ we denote the alternating word $abab\dots$ of length $M_{ab}$. 
The Artin group on two generators with the label $M$ will be denoted by $\Art_M$, 
and the Artin group with three generators and labels $M,N,P$ will be denoted by $\Art_{MNP}$.

\begin{thmx}\label{thm:main}
If $M,N,P\geq 4$ and $(M,N,P)\neq (2m+1, 4,4)$ (for any permutation), then the Artin group $A_{MNP}$ is residually finite. 
\end{thmx}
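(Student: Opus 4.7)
The plan is to leverage the splitting result advertised in the abstract, which expresses $A_{MNP}$ (perhaps up to passing to a finite-index subgroup) as a finite graph of groups $\mathcal{G}$ whose vertex groups are finite-rank free groups and whose edge groups are finitely generated subgroups thereof. Theorem~\ref{thm:main} then reduces to proving that such a $\pi_1(\mathcal{G})$ is residually finite, i.e.\ to a purely graph-of-free-groups statement that is independent of the Artin origin.

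To carry out this reduction, I would aim to apply the Haglund--Wise/Agol machinery: the goal is to exhibit $\pi_1(\mathcal{G})$ as a group acting properly and cocompactly on a CAT(0) cube complex with separable hyperplane stabilisers, whence it is virtually compact special and in particular residually finite. The key technical step is to verify that the edge subgroups of $\mathcal{G}$ form a malnormal (or at least almost malnormal) collection in each vertex group. This I would attempt by a direct combinatorial analysis inside the free vertex groups, inspecting how the generators of each edge group are written as words in a chosen free basis and arguing that conjugates of edge groups intersect trivially or in controlled cyclic subgroups. Once malnormality is in hand, the Bass--Serre tree of $\mathcal{G}$ together with a wall structure on the vertex spaces produces a dual cube complex on which $\pi_1(\mathcal{G})$ acts properly and cocompactly; separability of hyperplane stabilisers then follows from the LERF property of free groups, and Agol's theorem yields virtual specialness.

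The main obstacle I expect is the malnormality check, and the exclusion $(M,N,P)\neq (2m+1,4,4)$ strongly suggests it is there that the argument would break down for those triples: an odd label adjacent to two labels equal to $4$ is exactly the configuration one expects to produce a Baumslag--Solitar-like subgroup, or a non-trivial common centraliser of two edge subgroups, and either of these defects destroys malnormality of the edge system. The remaining case analysis, organised by the parities and sizes of $M,N,P$, I expect to be intricate but routine once the correct splittings from the first half of the paper are plugged in; with malnormality verified, the passage from a graph of free groups to residual finiteness is the now-standard route through cube complexes.
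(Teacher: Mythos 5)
Your high-level plan — split $A_{MNP}$ as a graph of free groups, then deduce residual finiteness by showing the resulting group is virtually special — matches the paper's overall strategy. But there is a genuine gap at the step you yourself flag as the crux: malnormality of the edge group in the vertex groups of the splitting. In the paper's splitting $A_{MNP}=A\ast_C B$ (Corollary~\ref{cor:splitting of 3 gen}), the edge group $C$ is \emph{never} malnormal in $A$. The paper points this out explicitly at the start of Section~\ref{sec:rf of 3 gen}: if $C$ were malnormal, the twisted double $D(A,C,\beta)$ would be hyperbolic by Bestvina--Feighn, but $A_{MNP}$ is never hyperbolic (each $2$-generator dihedral Artin subgroup contributes a $\Z^2$). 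So the direct combinatorial check you propose inside the free vertex groups is doomed for \emph{all} finite $(M,N,P)$, not just the excluded triples, and the cube-complex machinery cannot be applied to the original amalgam.

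The idea you are missing is the passage to a further quotient. The paper constructs an explicit quotient $\phi\colon A\to\bar A$ — a free product of finite cyclic groups when some label is even (Proposition~\ref{prop:triangle group quotient even}), and a hyperbolic von Dyck group when all labels are odd (Proposition~\ref{prop:triangle group quotient}) — under which $\bar C=\phi(C)$ \emph{does} become malnormal and quasiconvex. Then Hsu--Wise/Haglund--Wise is applied to $D(\bar A,\bar C,\bar\beta)$, not to the original splitting. Separability in $D(\bar A,\bar C,\bar\beta)$ is then pulled back to detect a finite quotient $\Psi$ of the original group such that the kernel of $\Psi$ acts on the Bass--Serre tree with edge stabilizers that are free factors of vertex stabilizers; this is where the ``oppressive set'' machinery of Section~\ref{sec:rf} and Wise's clean graph-of-groups criterion (Theorem~\ref{thm:wise clean}) enter. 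The exclusion $(M,N,P)\neq(2m+1,4,4)$ is indeed tied to a malnormality failure, but for $\bar C$ in $\bar A$ (via the fiber-product computation and Remark~\ref{rem:all monochrome}), not for $C$ in $A$, and not to a Baumslag--Solitar obstruction. Without the intermediate quotient and the separation-from-oppressive-set step, your proof cannot get off the ground.
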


None of the groups in Theorem~\ref{thm:main} with $M,N,P<\infty$ were previously known to be residually finite. % The theorem is satsified for all $M,N,P\geq 3$, except $(M,N,P) = (3,3,2k+1)$ where $k\geq 2$. 
%We believe that groups $\Art_{33(2k+1)}$ where $k\geq 2$ are also residually finite, 
%but our methods do not work in these cases. 
We also obtain residual finiteness of many more $2$-dimensional Artin groups. 
For precise statements see Section~\ref{sec:rf of other}.
In a subsequent work \cite{JankiewiczArtinSplittings} we prove the residual finiteness of Artin groups $\Art_{2MN}$ where $M,N\geq 4$ and at least one of them is even.

Our proof of Theorem~\ref{thm:main} %and ~\ref{thm:main3} 
relies on a splitting of these Artin group 
as a free product with amalgamation or HNN extension of finite rank free groups. 
The existence of such splitting depends on the combinatorics of the defining graph. 
Recall an Artin group $\Art_\Gamma$ with the defining graph $\Gamma$
 has \emph{large type} if all labels in $\Gamma$ are at least $3$. The quotient of an Artin group, obtained by adding the relation $a^2=1$ for every $a\in V(\Gamma)$ is a Coxeter group. 
We say $\Art_{\Gamma}$ is \emph{spherical} if the corresponding Coxeter quotient is finite, 
and $\Art_{\Gamma}$ is \emph{$2$-dimensional} if no triple of generators generates a spherical Artin group. 
In particular, every large type Artin group is $2$-dimensional. 
For the definition of \emph{admissible} partial orientation of $\Gamma$, see Definition~\ref{defn:admissible}. 
We prove the following.

\begin{thmx}\label{thm:main2}
If $\Gamma$ admits an admissible partial orientation, then $\Art_{\Gamma}$ splits as a free product with amalgamation or an HNN-extension of finite rank free groups. \end{thmx}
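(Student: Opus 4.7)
The plan is to promote the admissible partial orientation of $\Gamma$ to an action of $\Art_\Gamma$ on a simplicial tree with finite-rank free vertex and edge stabilizers, and then to invoke Bass--Serre theory to extract the desired splitting.

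First I would use the orientation data to produce a surjective homomorphism from $\Art_\Gamma$ onto either $\mathbb{Z}$ (for the HNN case) or the infinite dihedral group $\mathbb{Z}/2 \ast \mathbb{Z}/2$ (for the amalgamated case). The plain exponent-sum map is insufficient in general: each Artin relation $(a,b)_{M}=(b,a)_{M}$ balances generator occurrences, and a nontrivial weighting of vertices that respects all balances only exists under restricted combinatorial hypotheses on $\Gamma$. The admissibility condition should be exactly the criterion that renders the system of constraints (one per edge of $\Gamma$) consistent along every loop of $\Gamma$, playing the role of a cocycle condition, with the directedness of each simple cycle as the key input.

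Given such a map $\phi$, I would work with a two-dimensional cocompact classifying space $X$ for $\Art_\Gamma$ -- for instance the presentation $2$-complex, or the Deligne complex, which is two-dimensional precisely because $\Art_\Gamma$ is $2$-dimensional -- and view $\phi$ as a Morse-type height function on the universal cover $\tilde X$. The stabilizer of a level set then becomes a vertex group of the Bass--Serre splitting, and cocompactness of the $\Art_\Gamma$-action on $\tilde X$ provides its finite generation provided the level set is connected.

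The main obstacle is freeness of the vertex groups, equivalently that the relevant level sets in $\tilde X$ are homotopy equivalent to graphs. This reduces to a local question about ascending and descending links at each vertex of $\tilde X$. For the dihedral subgroups $\Art_{M}$ the calculation is classical, but gluing the local pictures into a global statement across triangles and longer cycles of $\Gamma$ is delicate: a simple cycle that is not coherently oriented could produce a $2$-sphere in the level set and destroy freeness. The admissible partial orientation should be exactly the hypothesis designed to rule this out, yielding a level set that is a tree up to homotopy and thereby a finite-rank free vertex group.
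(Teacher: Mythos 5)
Your proposal goes off course at the very first step. The vertex groups in the splitting of Theorem~\ref{thm:decomposition} are not kernels of any homomorphism $\phi:\Art_\Gamma\to\Z$ or $\phi:\Art_\Gamma\to\Z/2*\Z/2$; they are non-normal subgroups of infinite index, so no Bestvina--Brady style Morse function built from such a $\phi$ can produce them as level-set groups. Already for the triangle $\Art_{MNP}$ with $M,N,P\geq 3$, the paper exhibits $\Art_{MNP}\cong A*_CB$ with $A\cong F_3$, $B\cong F_4$, $C\cong F_7$, where $C$ has index $2$ in $B$ but infinite index in $A$. In the HNN case $\rank A=|E(\Gamma)|$ while $\rank B=1-|V(\Gamma)|+2|E(\Gamma)|$, so $\rank B>\rank A$ whenever $\Gamma$ has a cycle, and the edge group is a subgroup of $A$ of infinite index; this is not the kind of ascending HNN extension that a $\Z$-valued Morse function yields. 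Your two candidate complexes are also not the right ones: neither the standard presentation $2$-complex nor the Deligne complex carries the cell-wise fibration structure the argument needs.

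What the paper does instead is a Seifert--van Kampen decomposition of the Brady--McCammond \emph{non-standard} presentation complex $X_\Gamma$, which has an extra generator $x=ab$ per edge of $\Gamma$ so that every $2$-cell fibers over $[0,1]$. The height function $h$ is defined only on the disjoint union of $2$-cells and explicitly does \emph{not} descend to $X_\Gamma$ --- there is no cocycle and no global Morse function --- but the level sets $X_0$ (a bouquet of loops, one per edge of $\Gamma$), $X_{1/2}$ (the graph $\Gamma$ with doubled edges) and $X_{1/4}$ (a double cover of $X_{1/2}$) are well-defined subspaces, and $X_\Gamma=N_0\cup N_{1/2}$ with $N_0\cap N_{1/2}=N_{1/4}$. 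Seifert--van Kampen (Lemmas~\ref{lem:pushout} and~\ref{lem:hnn}) then gives the splitting with $A=\pi_1 X_0$, $B=\pi_1 X_{1/2}$, $C=\pi_1 X_{1/4}$, once the map $X_{1/4}\to X_0$ is shown to be $\pi_1$-injective. This is where admissibility enters, but not as a cocycle condition or a link condition in the universal cover: the absence of misdirected even cycles makes $X_{1/4}\to\overline X_{1/4}$ a homotopy equivalence (Proposition~\ref{prop:homotopic X14}), and the absence of almost misdirected cycles makes $\overline X_{1/4}\to X_0$ a combinatorial immersion (Proposition~\ref{prop:locally injective}). Your instinct that the orientation must forbid a bad configuration of cycles is sound, but the object it controls is a map between finite graphs in the quotient complex, not ascending/descending links upstairs, and the splitting itself is not mediated by any quotient homomorphism.
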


The above theorem includes all large type Artin groups whose defining graph $\Gamma$ admits an orientation 
such that each cycle is directed. 

All linear groups are residually finite by a classical result by Mal'cev \cite{Malcev40}. 
Among Artin group very few classes are known to be residually finite, and even fewer linear. 
It was once a major open question whether braid groups are linear and 
it was proved independently by Krammer~\cite{Krammer2002} and Bigelow~\cite{Bigelow2001}. 
Later, the linearity was extended to all spherical Artin groups by Cohen-Wales~\cite{CohenWales2002}, 
and independently by Digne \cite{Digne2003}. 
The right-angled Artin groups are also well known to be linear.  
Since linearity is inherited by subgroups, any virtually special Artin group is linear. 
Artin groups whose defining graphs are forests are the fundamental groups of graph manifolds with boundary
by the work of Brunner~\cite{Brunner92} and Hermiller-Meier \cite{HermillerMeier99}, 
and so they are virtually special by the work of Liu~\cite{LiuGraphManifolds} 
and Przytycki-Wise~\cite{PrzytyckiWiseGraphManifolds}. 
Artin groups in certain classes (including $2$-dimensional, $3$-generators) 
are not cocompactly cubulated even virtually, unless they are sufficiently similar to RAAGs 
by Huang-Jankiewicz-Przytycki~\cite{HuangJankiewiczPrzytycki16} and 
independently by Haettel~\cite{HaettelArtin}. 
In particular, if $M,N,P$ are finite, none of the groups in Theorem~\ref{thm:main}, is virtually cocompactly cubulated. 
Haettel has a conjectural classification of all virtually cocompactly cubulated Artin groups~\cite{HaettelArtin}. 
Haettel also showed that some triangle-free Artin group 
act properly but not cocompactly on locally finite, finite dimensional 
CAT(0) cube complexes \cite{HaettelTriangleFreeArtin}. We note that if one of the exponents $M, N, P$ is infinite, then the residual finiteness of $\Art_{MNP}$ is well-known.

The list of other known families of residually finite Artin groups is short. An Artin group is of \emph{FC type} if every clique (i.e.\ a complete induced subgraph) in $\Gamma$ is the defining graph of a spherical Artin group. 
(Blasco-Garcia)-(Martinez-Perez)-Paris showed that FC type Artin groups with all labels even are residually finite~\cite{BlascoGarciaMartinezPerezParis19}. (Blasco-Garcia)-Juhasz-Paris showed in~\cite{BlascoGarciaJuhaszParis18} 
the residual finiteness of Artin groups with defining graph $\Gamma$ 
where the vertices of $\Gamma$ admit a partition $\mathcal P$ such that 
\begin{itemize}
\item for each  $X\in \mathcal P$ the Artin group $A_X$ is residually finite,
\item for each distinct $X,Y\in \mathcal P$ there is at most one edge in $\Gamma$
joining a vertex of $X$ with a vertex of $Y$, and
\item the graph $\Gamma/\mathcal P$ is either a forest, or a triangle free graph with even labels. The graph $\Gamma/\mathcal P$ is defined as follows. The vertices of $\Gamma/\mathcal P$ are $\mathcal P$, and an edge with label $M$ joins sets $X,Y \in \mathcal P$ if there exist $a\in X, b\in Y$ such that $M_{ab} = M$.
\end{itemize}
In \cite{JankiewiczArtinSplittings} the author proves the residual finiteness of Artin groups $\Art_{2MN}$ where $M,N\geq 4$ and at least one of them is even.

The residual finiteness of $3$-generator affine Artin groups (i.e.\ corresponding to affine Coxeter groups), 
i.e.\ $\Art_{244}$, $\Art_{236}$, $\Art_{333}$ follows from the work of Squier~\cite{Squier87}. 
Squier proved that $\Art_{244}$ splits as an HNN extension of $F_2$ by an automorphism of an index two subgroup, and both $\Art_{236}$ and $\Art_{333}$ split as $F_3*_{F_7}F_4$ 
where $F_7$ is normal and of finite index in each of the factors. 
We give a geometric proof of the Squier's splitting of $\Art_{333}$ in Example~\ref{exa:Art333}.
The subgroup $F_7$ has index three and two respectively in the factors $F_3$ and $F_4$ in the splitting of $\Art_{333}$.
This yields a short exact sequence of groups 
\[1\to F_7\to \Art_{333}\to \Z/3*\Z/2\to 1.\]
In particular $\Art_{333}$ is free-by-(virtually free), and therefore virtually free-by-free. 
%In particular, $\Art_{333}$ is virtually a split extension of a finite rank free group by a free group. 
Since every split extension of a finitely generated residually finite group by a residually finite group 
is residually finite \cite{Malcev83}, 
we can conclude that $\Art_{333}$ is residually finite. 
Similar arguments yield residual finiteness of $\Art_{244}$ and $\Art_{236}$. 
The residual finiteness of $\Art_{333}$ and $\Art_{244}$ also follows from the fact that 
they are commensurable with the quotients of spherical Artin groups modulo their centers, 
respectively $\Art_{233}/Z$ and $\Art_{234}/Z$ \cite{CharneyCrisp2005}.

Theorem~\ref{thm:main2} provides a splitting of $\Art_{\Gamma}$ as a graph of groups with free vertex groups.
In general, the existence of such a splitting does not guarantee residual finiteness. 
In order to prove Theorem~\ref{thm:main} we carefully analyze the splitting and 
use a criterion for residual finiteness of certain amalgams of special form. 
See Theorem~\ref{thm:conditions for rf} and Theorem~\ref{thm:conditions for rf hnn}.
The following question is open in general.
\begin{question}
Let $A,B,C$ be finite rank free groups. When is the group $A*_CB$  (or $A*_B$) residually finite?
\end{question}
One instance where $G = A*_CB$ (or $A*_B$) is residually finite is when $C$ is malnormal in $A,B$.
By the combination theorem of Bestvina-Feighn \cite{BestvinaFeighn92}, 
if $A,B$ are hyperbolic, and $C$ is quasi-convex in both $A$ and $B$ and malnormal in at least one of $A,B$, 
then $G=A*_CB$ is hyperbolic. Wise showed that in such a case, $G$ is residually finite \cite{WisePolygons}, 
and later Hsu-Wise proved that $G$ is in fact virtually special \cite{HsuWiseCubulatingMalnormal}.
Another class of examples of residually finite amalgams are 
doubles of free groups along a finite index subgroup. 
These groups are virtually direct products of two finite rank free groups \cite{BDGM2001}.

On the other hand there are examples of amalgamated products of free groups that are not residually finite. 
Bhattacharjee constructed a first example which is an amalgam of two free groups along a common subgroup 
of finite index in each of the factors~\cite{Bhattacharjee94}. 
More examples are lattices in the automorphism group of a product of two trees, 
which split as twisted doubles of free groups along a finite index subgroup, 
and they were constructed by \cite{Wise96Thesis} and \cite{BurgerMozes97}.
 The Burger-Mozes examples are not only non residually finite, but virtually simple. 

The paper is organized as follows.
In Section~\ref{sec:prem} we fix notation and recall some geometric group theory tools that we use later. 
In Section~\ref{sec:rf} we recall some facts about residual finiteness 
and prove our criterion for residual finiteness of twisted doubles of free group (Theorem~\ref{thm:conditions for rf}) and of HNN extensions of free groups (Theorem~\ref{thm:conditions for rf hnn}).
In Section~\ref{sec:Artin} we recall the definition of Artin groups, 
and describe their non-standard presentations due to Brady-McCammond \cite{BradyMcCammond2000}. 
In Section~\ref{sec:Decomposition} we carefully study the presentation complex from the previous section 
and prove Theorem~\ref{thm:main2} (as Theorem~\ref{thm:decomposition}). 
Finally, in Section~\ref{sec:rf of 3 gen} we prove Theorem~\ref{thm:main} 
(as Corollary~\ref{cor:at least one even} and Corollary~\ref{cor:all odd}). 
A proof in the case where at least one label is even, is generalized to a broader family of Artin groups in Section~\ref{sec:rf of other}.
\subsection*{Acknowledgements} The author would like to thank Piotr Przytycki and Dani Wise for helpful conversations. 
She is also very grateful to anonymous referees for their corrections and suggestions.

%%%%%%%%%%%%%%%%%%%%%%%%%%%%%%%%%%%%%%%%%%%%%%%%%%%%
%%%%%%%%%%%%%%%%%%%%%%%%%%%%%%%%%%%%%%%%%%%%%%%%%%%%
\section{Graphs}\label{sec:prem}
%%%%%%%%%%%%%%%%%%%%%%%%%%%%%%%%%%%%%%%%%%%%%%%%%%%%
%%%%%%%%%%%%%%%%%%%%%%%%%%%%%%%%%%%%%%%%%%%%%%%%%%%%
In this section we gather together some standard notions and tools that we use in later sections.

%%%%%%%%%%%%%%%%%%%%%%%%%%
\subsection{Basic definitions}
%%%%%%%%%%%%%%%%%%%%%%%%%%
A graph is a $1$-dimensional CW-complex.
All the graphs we consider are finite. 
The vertex set of a graph $X$ is denoted by $V(X)$, and its edge set is denoted by $E(X)$.
Most graphs we consider are multigraphs, i.e.\  they may have multiple edges with the same endpoints, 
and \emph{loops}, i.e.\ edges with the same both endpoints. 
We refer to graphs without loops and multiple edges with equal endpoints as \emph{simple graphs}. 

A map $\rho$ between graphs is \emph{combinatorial} if the image of each vertex is a vertex, and while restricted to an open edge with endpoints $v_1, v_2$ it is a homeomorphism onto an edge with endpoints $\rho(v_1), \rho(v_2)$.
A combinatorial map $\rho:Y\to X$ between graph $X,Y$ is a \emph{combinatorial immersion}, if for every vertex $v\in Y$ and oriented edges $e_1, e_2$ with terminal vertex $v$ 
such that $\rho(e_1) = \rho(e_2)$, we have $e_1 = e_2$.
A combinatorial immersion $\rho:Y\to X$ induces 
an injective homomorphism $\pi_1 (Y,y)\hookrightarrow \pi_1 (X,x)$ \cite[Prop 5.3]{Stallings83} 
where $x,y$ are basepoints of $X,Y$ respectively with $\rho(y) = x$. 
A different basepoint $y'$ in the same connected component of $Y$ as $y$ and such that $\rho(y') = x$ represents 
a subgroup $\pi_1(Y,y')\hookrightarrow \pi_1(X,x)$ which is conjugate to $\pi_1 (Y,y)$.

Let $I_n$ denote a graph with vertex set $\{0,1,\dots, n\}$ 
with an edge for every pair of vertices $k_1, k_2$ such that $|k_2-k_1| =1$. 
Let $C_n$ denote graph $I_{n-1}$ with an additional edge joining $n-1$ and $0$.
A \emph{path of length $n$} in a graph $X$, is a combinatorial immersion $I_n\to X$.
A \emph{cycle of length $n$} in a graph $X$, is a combinatorial immersion $C_n\to X$.
We say a path or cycle is \emph{simple}, 
if vertices $0,\dots, n-1$ are mapped to distinct vertices in $X$.
We say a path is \emph{closed}, if $0$ and $n$ are mapped to the same vertex in $X$. 
%i.e.\ the path factors through a cycle of the same length.
A \emph{segment} in $X$ is a simple path whose only vertices that are mapped to vertices of valence $>2$ in $X$ are its endpoints. We refer to vertices of valence $>2$ as \emph{branching vertices}.

Suppose $X$ has a single vertex, i.e.\ $X$ is a wedge of loops. 
Let $\rho:Y\to X$ be a combinatorial immersion. 
If we choose an orientation for each edge of $X$, 
then the map $Y\to X$ can be represented by the graph $Y$ 
with edges oriented and labelled by $E(X)$. 
Visually, we pick a distinct color for each edge of $X$ and 
represent $Y\to X$ as $Y$ with edges oriented and colored. 
%We say a cycle or a path in $Y$ is \emph{monochrome}, 
%if it is mapped onto a single loop in $Y$.

If $\Gamma$ is a simple graph, 
we can describe a path as an $n$-tuple $(a_1,a_2,\dots, a_{n})$ 
of vertices of $\Gamma$ where $\{a_i, a_i+1\}$ forms an edge for each $1\leq i <n$.
Similarly we can describe a cycle in $\Gamma$ as an $n+1$-tuple $(a_1,a_2,\dots, a_{n}, a_1)$,
 if $(a_1,a_2,\dots, a_{n}, a_1)$ is a path.

%%%%%%%%%%%%%%%%%%%%%%%%%%
\subsection{Fiber product of graphs}\label{sec:fiber}
%%%%%%%%%%%%%%%%%%%%%%%%%%
Let $\rho_i:(Y_i, y_i)\to (X, x)$ is a combinatorial immersion of based graphs for $i=1,2$. 
%and let $y_1,y_2, x$ be basepoints in $Y_1, Y_2, X$ respectively, 
%with $\rho_i(y_i) = x$ for $i=1,2$.
The intersection of subgroups $\pi_1(Y_1, y_1)$ and $\pi_1(Y_2,y_2)$ of $\pi_1 (X,x)$ can be computed 
as the fundamental group of the fiber product of based graphs, by Stallings \cite{Stallings83}.
The \emph{fiber product of $Y_1$ and $Y_2$ over $X$}
is the pullback in the category of graphs,
i.e.\ it is the graph $Y_1\otimes_X Y_2$ with the vertex set
\[
\{(v_1,v_2)\in V(Y_1)\times V(Y_2): \rho_1(v_1) = \rho_2(v_2)\}
\] 
and the edge set
\[
\{(e_1, e_2)\in E(Y_1)\times E(Y_2): \rho_1(e_1) = \rho_2(e_2)\}
\]
where $\rho_1(e_1) = \rho_2(e_2)$ is the equality of oriented edges.
The graph $Y_1\otimes_X Y_2$ often has several connected components. 
The natural combinatorial immersion $Y_1\otimes_X Y_2\to X$ 
induces the embedding $\pi_1(Y_1\otimes_X Y_2, (y_1, y_2))\to \pi_1(X,x)$. 
By \cite[Thm 5.5]{Stallings83}, $\pi_1(Y_1\otimes_X Y_2, (y_1, y_2))$ is the intersection 
of $\pi_1(Y_1, y_1)$ and $\pi_1(Y_2,y_2)$ in $\pi_1 (X,x)$.
See also \cite[Section 9]{KapovichMyasnikov2002}.

Suppose $X$ has a unique vertex $y$. 
Then $V(Y_1\otimes_X Y_2) = V(Y_1)\times V(Y_2)$.
If $\rho:Y\to X$ is a combinatorial immersion of graphs then connected components of $Y\otimes_X Y$
represent the intersections $H\cap H^g$ where $H:=\pi_1 (Y,y)<\pi_1(X,x)$ and $g\in\pi_1(X,x)$.
In particular, one of the connected components of $Y\otimes_X Y$
is a copy of $Y$ with the vertex set $\{(v,v): v\in V(Y)\}$. 
It corresponds to the intersection $H\cap H^g = H$ where $g\in H$. 
We refer to this connected component of $Y\otimes_X Y$ as \emph{trivial}.
All other subgroups of the form $H\cap H^g$ are either $\{e\}$, 
or their conjugacy classes are represented by nontrivial connected components of $Y\otimes_X Y$.

\section{Residual finiteness}\label{sec:rf}
%%%%%%%%%%%%%%%%%%%%%%%%%%%%%%%%%%%%%%%%%%%%%%%%%%%%
%%%%%%%%%%%%%%%%%%%%%%%%%%%%%%%%%%%%%%%%%%%%%%%%%%%%
A group $G$ is \emph{residually finite} if for every $g\in G-\{e\}$ there exists a finite index subgroup $G'<G$ 
such that $g\notin G'$. 
Equivalently, there exists a finite quotient $\phi:G\to \bar G$ such that $\phi(g)\neq e$. 
It is easy to see, that if $G$ has a finite index residually finite subgroup, then $G$ is residually finite.

Let $H$ be a subgroup of $G$, let $\phi:G\to \bar G$ be a (not necessarily finite) quotient 
and let $\{g_i\}_i\subseteq G-H$ be a collection of elements. 
We say $\phi$ \emph{separates} $H$ from $\{g_i\}_i$ if $\phi(g_i)\notin \phi(H)$ for all $i\in I$.
A subgroup $H<G$ is \emph{separable} if 
for every finite collection $\{g_i\}_i\subseteq G-H$, 
there exists a finite quotient $\phi:G\to \bar G$ that separates $H$ from $\{g_i\}_i$. 
Equivalently, there exists a finite index subgroup $G'<_{f.i}G$ containing $H$ such that $g_i\notin G'$ for all $i$. 
To see the equivalence of the two definitions, in one direction take $N$ to be the normal core of $G'$ in $G$ 
(i.e.\ the intersection of all conjugates of $G'$ in $G$) and set $\bar G = G/N$. 
Conversely, take $G' = \phi^{-1}(\phi(H))$.

The main goal of this section is to formulate our criterion for residual finiteness
of certain free products of amalgamation and HNN extensions,
Theorem~\ref{thm:conditions for rf} and Theorem~\ref{thm:conditions for rf hnn}.
 We use the following criterion of Wise
 for residual finiteness of graph of free groups \cite{WisePolygons}. 
A graph of groups is \emph{algebraically clean}, 
if vertex groups are free, and edge groups are free factors in both of their vertex groups. 
\begin{thm}\cite[Thm 3.4]{WisePolygons}\label{thm:wise clean} 
Let $G$ split as a finite algebraically clean graph of groups 
where all edge groups are of finite rank. 
Then $G$ is residually finite.
\end{thm}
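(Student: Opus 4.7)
The plan is to realize $G$ as the fundamental group of a finite graph of finite graphs and to construct finite-index subgroups avoiding any prescribed nontrivial element. First, I would realize $G\cong\pi_1(\mathcal{X})$ where each vertex space $X_v$ is a finite graph with $\pi_1(X_v)=G_v$, each edge space $X_e$ is a finite graph with $\pi_1(X_e)=G_e$, and each attaching map $X_e\to X_v$ is realized as the inclusion of a subgraph. Algebraic cleanness guarantees such a realization exists and moreover yields a combinatorial retraction $r_v\colon X_v\to X_e$ collapsing a complementary set of edges to a basepoint; on $\pi_1$ this is the retraction corresponding to the free-factor decomposition of the vertex group.

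The core technical tool, which I will call the \emph{extension lemma}, says: if $\widetilde{X}_e\to X_e$ is a finite cover corresponding to a finite-index subgroup $K\le \pi_1(X_e)$, then the preimage $r_v^{-1}(K)\le \pi_1(X_v)$ has the same finite index, and a double-coset computation shows that the associated cover $\widetilde{X}_v\to X_v$ has connected preimage of $X_e$ which is exactly $\widetilde{X}_e$. Thus any finite cover of an edge graph extends compatibly to a finite cover of each incident vertex graph, and this is the mechanism by which local finite-cover information will be propagated across $\mathcal{X}$.

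To prove residual finiteness, I fix $g\in G\setminus\{1\}$ and represent it by a closed combinatorial path $\gamma$ in $\mathcal{X}$. Using Hall's theorem that finitely generated subgroups of free groups are separable, I would first choose a finite cover of some vertex graph visited by $\gamma$, large enough to separate the relevant arc of $\gamma$ in that vertex space; the extension lemma then propagates this cover across incident edges to finite covers of neighboring vertex graphs, continuing until a cover of all of $\mathcal{X}$ has been assembled. The main obstacle is compatibility: a vertex graph may be adjacent to several edges along which $\gamma$ travels, and each traversal imposes a condition on the cover of that vertex graph coming from the opposite side. I would resolve this by analyzing the graph-of-groups normal form of $g$, identifying precisely which arcs need to be separated, and iteratively refining the initial finite cover --- again using Hall separability --- until all constraints are met simultaneously. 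The result is a finite cover $\widetilde{\mathcal{X}}\to \mathcal{X}$ whose fundamental group is a finite-index subgroup of $G$ not containing $g$.
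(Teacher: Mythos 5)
This theorem is cited from Wise; the paper under review contains no proof of it, so there is no paper argument to compare against in detail. Your sketch does capture the correct high-level architecture that underlies the standard proof: a topological realization in which edge spaces sit inside vertex spaces as subgraphs admitting a combinatorial retraction (this is exactly what algebraic cleanness buys), the extension lemma obtained by pulling finite covers back through that retraction, and Marshall Hall separability for the local separation in a free group. These are the right ingredients.

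The genuine gap is the compatibility step, and ``iteratively refining \dots until all constraints are met simultaneously'' is not an argument. The difficulty is sharpest when the underlying graph has a cycle --- already for a single HNN edge $e$ at a vertex $v$, with both attaching maps $i_1,i_2\colon G_e\hookrightarrow G_v$ free-factor inclusions and $\beta=i_2 i_1^{-1}$. You then need finite-index $\hat G_v\le G_v$ and $\hat G_e\le G_e$ with $\hat G_v\cap i_j(G_e)=i_j(\hat G_e)$ for \emph{both} $j$ and with $\hat G_e$ invariant under $\beta$. Intersecting the two retraction preimages $r_1^{-1}(\hat G_e)\cap r_2^{-1}(\hat G_e)$ only gives the inclusion $\hat G_v\cap i_j(G_e)\subseteq i_j(\hat G_e)$, and iteratively shrinking the subgroups to force equality need not stabilize at finite index. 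One standard way to close this gap is to choose the finite covers from a verbal (hence characteristic) family, e.g.\ $\hat G_v=\gamma_k(G_v)\,G_v^{\,n}$: for any free factor $C\le A$ of a free group, applying the retraction $A\to C$ gives $\gamma_k(A)A^n\cap C=\gamma_k(C)C^n$, so the induced covers of all edge and vertex spaces are automatically compatible, and $\beta$-invariance of $\hat G_e$ is free because the subgroup is characteristic. One then reduces to residual finiteness of the quotient graph of finite nilpotent groups (a virtually free group), using in addition that free factors, being retracts, are closed in the pro-(finite nilpotent) topology so that the normal form survives. Without some uniformizing device of this kind, your proposal does not yet constitute a proof.
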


%%%%%%%%%%%%%%%%%%%%%%%%%%
\subsection{Free factor and separability}
%%%%%%%%%%%%%%%%%%%%%%%%%%
Let $H, G$ be finite rank free groups.
A famous theorem by Marshall Hall \cite{Hall49} states 
that every finitely generated subgroup of a free group is virtually a free factor, 
i.e.\ if $H<G$ then there exists a finite index subgroup $G'<G$ 
such that $H<G'$ and $H$ is a free factor of $G'$. 
A closely related result states that free groups are \emph{subgroup separable}, 
i.e.\ every finitely generated subgroup is separable.

Let $X, Y$ be graphs with basepoint $x_0, y_0$ respectively.
%In this section, let $X$ be a bouquet  of loops with a single vertex $x_0$, and let $Y$ be a graph with the basepoint $y_0$. 
Let $\rho:(Y, y_0)\to (X, x_0)$ be a combinatorial immersion inducing the inclusion of finite rank free group $H:= \pi_1(Y,y_0)\hookrightarrow \pi_1(X,x_0)=:G$. 

\begin{defn}\label{defn:oppressive}
Let $\mathcal A_\rho\subseteq G$ consist of all $g\in G$ represented by a cycle $\gamma$ in $X$  
such that $\gamma$ is a concatenation of paths $\gamma_1\cdot \gamma_2$ where:
\begin{itemize}
\item $\gamma_1 = \rho(\mu_1)$ and $\mu_1$ is a non-trivial simple non-closed path in $Y$ going from $y_0$ to some vertex $y_1$, 
\item $\gamma_2 = \rho(\mu_2)$ and $\mu_2$ is either trivial, or is a simple non-closed path in $Y$ going from some vertex $y_2$ to $y_0$, where $y_1\neq y_2\neq y_0$.
\end{itemize}
We refer to $\mathcal A_{\rho}$ as the \emph{oppressive set for $H$ in $G$ with respect to ${\rho}$}. 
We say $\mathcal A$ is an \emph{oppressive set for $H$ in $G$}, if there exists a combinatorial immersion ${\rho}$ with $\mathcal A = \mathcal A_{\rho}$.
\end{defn}

%\begin{lem}\label{lem:oppressive set disjoint}
%An oppressive set $\mathcal A$ for $H$ in $G$ is disjoint from $H$.
%\end{lem}
%\begin{proof} 
%Suppose that there exists $g\in \mathcal A$ such that $g\in H$. 
%Then $g$ is represented by a loop $\gamma$ which can be expressed 
%as a concatenation $\gamma_1\cdot \gamma_2$ as in Definition~\ref{defn:oppressive}. 
%Since $\rho$ is a combinatorial immersion, there is a unique path $\mu_1$ 
%starting at $y_0$ such that $\rho(\mu_1) =\gamma_1$, 
%and there is a unique path $\mu_2$ ending at $y_0$ 
%such that $\rho(\mu_2) =\gamma_2$. 
%Since $g\in H$ the path $\mu_1$ must end at the same vertex as $\mu_2$ starts. 
%This is a contradiction.
%\end{proof}

In Proposition~\ref{prop:oppressive set properties} we state some properties of the set $\mathcal A_{\rho}$. In particular, we explain the connection between the separation from the set $\mathcal A_{\rho}$ and $H$ being a free factor.
%how quotients of $G$ separating $H$ from $\mathcal A$ corresponds to $H$ being a free factor.
In one of the proofs below we use the following easy lemma, due to Karrass-Solitar.
\begin{lem}[\cite{KarrassSolitar69}]\label{lem:monotonicity}
Let $H$ be a free factor in $G$. 
Then for every finite index subgroup $G'<G$ the intersection $G'\cap H$ is a free factor in $G'$.
\end{lem}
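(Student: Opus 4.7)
The plan is to apply Bass--Serre theory to the splitting $G = H * K$, where $K$ is a complementary free factor to $H$. This decomposition corresponds to an action of $G$ on its Bass--Serre tree $T$, with two orbits of vertices (stabilized respectively by the conjugates of $H$ and the conjugates of $K$), a single orbit of edges, and all edge stabilizers trivial. Let $v_H \in T$ be the vertex with $\mathrm{Stab}_G(v_H) = H$.

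Next I would restrict the action to the finite-index subgroup $G' < G$. Since the $G$-action on $T$ is cocompact and $[G : G'] < \infty$, the quotient $G' \backslash T$ is a finite graph, and Bass--Serre theory presents $G'$ as the fundamental group of a finite graph of groups whose edge groups are trivial and whose vertex groups are the $G'$-stabilizers of a chosen set of orbit representatives.

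Because every edge group is trivial, collapsing a spanning tree of the underlying quotient graph exhibits $G'$ as a free product of its vertex groups together with a free group (of rank equal to the first Betti number of $G' \backslash T$). In particular, every vertex group is a free factor of $G'$. Taking $v_H$ itself as the representative of its $G'$-orbit, the associated vertex group is $\mathrm{Stab}_{G'}(v_H) = G' \cap \mathrm{Stab}_G(v_H) = G' \cap H$, which is therefore a free factor of $G'$. The argument is a direct application of standard machinery and presents no serious obstacle; the only mild subtlety is that one should choose $v_H$ as its own $G'$-orbit representative so the matching vertex group is exactly $G' \cap H$ rather than a $G'$-conjugate of it, which is immediate.
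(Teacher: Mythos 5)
Your proof is correct. The paper itself offers no argument for this lemma --- it is stated purely as a citation to Karrass--Solitar --- so there is no in-paper proof to compare against. Your Bass--Serre argument is the modern packaging of exactly the reasoning one would expect behind the citation: Karrass--Solitar would have invoked the Kurosh subgroup theorem, which for $G' \le G = H * K$ produces a decomposition $G' = F' * \bigl(\Ast_i (G' \cap g_i H g_i^{-1})\bigr) * \bigl(\Ast_j (G' \cap k_j K k_j^{-1})\bigr)$, and taking the double-coset representative $g_i = 1$ exhibits $G' \cap H$ as one of the free factors. Your tree-theoretic version makes the same point: trivial edge stabilizers force the quotient graph of groups for $G'$ to present it as a free product of vertex stabilizers and a free group, and choosing $v_H$ as its own $G'$-orbit representative picks out $G' \cap H$ on the nose rather than a $G'$-conjugate of it. The observation that finiteness of $G \backslash T$ together with $[G : G'] < \infty$ gives finiteness of $G' \backslash T$ is the only point that needed flagging, and you did flag it. Note also that the finite-index hypothesis is not actually needed for the conclusion as you have argued it (the Kurosh theorem applies to arbitrary subgroups), though the lemma only requires the finite-index case.
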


%\begin{lem}\label{lem:marshall hall}
%If $\phi:G\to \bar G$ is a finite quotient 
%that separates $H$ from an oppressive set $\mathcal A$ for $H$ in $G$, 
%then $H\cap \ker\phi$ is a free factor in $\ker \phi$. 
%\end{lem}

\begin{prop}\label{prop:oppressive set properties}
Let $\rho:(Y, y_0)\to (X, x_0)$ be a combinatorial immersion of based graphs inducing the inclusion of finite rank free group $H:= \pi_1(Y,y_o)\hookrightarrow \pi_1(X,x_o)=:G$, and let $\mathcal A_{\rho}$ be the oppressive set for $H$ in $G$ with respect to $\rho$.
\begin{enumerate}
\item $\mathcal A_{\rho}\cap H = \emptyset$.
\item $\mathcal A_\rho=\emptyset$ if and only if $\rho$ is an embedding. 
\item For any based cover $(\hat X, \hat x_0)\to (X, x_0)$ such that $\rho$ factors through a combinatorial immersion $\hat \rho:(Y, y_0)\to (\hat X, \hat x_0)$, we have $\mathcal A_{\hat \rho} = \mathcal A_{\rho} \cap \pi_1(\hat X, \hat x_0)$.
\item If $\phi:G\to \bar G$ is a quotient 
that separates $H$ from $\mathcal A_{\rho}$, 
then $H\cap \ker\phi$ is a free factor in $\ker \phi$. 
\end{enumerate}
\end{prop}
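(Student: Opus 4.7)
My plan is to handle the four items in order, treating (1) and (2) by direct arguments about the immersion $\rho$ and (3) and (4) via covering-space techniques. For (1), I pass to the universal cover $\tilde X$ of $X$. Since $\rho$ is an immersion of graphs, it lifts to an embedding $\tilde \rho : \tilde Y \hookrightarrow \tilde X$ between trees, and $H$ acts on the subtree $\tilde \rho(\tilde Y)$ via deck transformations, with $g \cdot \tilde x_0 = \tilde \rho(g \cdot \tilde y_0)$ for $g \in H$. A cycle $\gamma = \rho(\mu_1) \cdot \rho(\mu_2)$ representing $g \in \mathcal A_\rho$ lifts to a path $\tilde \gamma$ from $\tilde x_0 = \tilde \rho(\tilde y_0)$ to $g \cdot \tilde x_0$ whose first half is $\tilde \rho(\tilde \mu_1)$, ending at $\tilde \rho(\tilde y_1)$. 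If $g$ were in $H$, then for trivial $\mu_2$ we would obtain $\tilde y_1 = g \cdot \tilde y_0$, hence $y_1 = y_0$; while for non-trivial $\mu_2$ uniqueness of lifts forces the second half of $\tilde \gamma$ to equal $\tilde \rho(g \cdot \tilde \mu_2)$, and injectivity of $\tilde \rho$ gives $\tilde y_1 = g \cdot \tilde y_2$, hence $y_1 = y_2$. Either way this contradicts the defining conditions on $\mu_1, \mu_2$.

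For (2), the forward direction is immediate: an embedding is injective on vertices, precluding the required identifications at the ends of $\mu_1$ and $\mu_2$. For the converse, the immersion axiom at shared vertices rules out nontrivial identifications of edges meeting at a common vertex, so any failure of injectivity of $\rho$ must come from two distinct vertices $y' \neq y''$ of $Y$ with $\rho(y') = \rho(y'')$; simple paths in the connected graph $Y$ from $y_0$ to $y'$ and from $y''$ to $y_0$ — or a single simple path with trivial second half, when one of $y', y''$ equals $y_0$ — produce an explicit element of $\mathcal A_\rho$. For (3), the inclusion $\mathcal A_{\hat \rho} \subseteq \mathcal A_\rho \cap \pi_1(\hat X, \hat x_0)$ follows by composing the witnessing cycle in $\hat X$ with the covering projection to $X$. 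For the reverse inclusion, let $g \in \mathcal A_\rho \cap \pi_1(\hat X, \hat x_0)$ with witnesses $\mu_1, \mu_2$: the lift $\hat \gamma$ of $\gamma$ to $\hat X$ at $\hat x_0$ is a loop, whose first half must equal $\hat \rho(\mu_1)$ (the unique lift of $\gamma_1$ starting at $\hat x_0$) and whose second half must equal $\hat \rho(\mu_2)$ (the unique lift of $\gamma_2$ ending at $\hat x_0$). This forces $\hat \rho(y_1) = \hat \rho(y_2)$, so the same $\mu_1, \mu_2$ witness $g \in \mathcal A_{\hat \rho}$.

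For (4), set $G' := \phi^{-1}(\phi(H))$: the separation hypothesis becomes $\mathcal A_\rho \cap G' = \emptyset$, and $G'$ contains both $H$ and $\ker \phi$. Take the based cover $\hat X' \to X$ corresponding to $G'$. Since $H \subseteq G'$, the immersion $\rho$ lifts to $\hat \rho' : Y \to \hat X'$, and applying (3) gives $\mathcal A_{\hat \rho'} = \mathcal A_\rho \cap G' = \emptyset$, whence by (2) the map $\hat \rho'$ is an embedding, realizing $Y$ as an embedded subgraph of $\hat X'$. Because $\ker \phi \triangleleft G$ and $\ker \phi \subseteq G'$, a fortiori $\ker \phi \triangleleft G'$, so the cover of $\hat X'$ corresponding to $\ker \phi$ is regular, and in it the preimage of $Y$ is an embedded subgraph whose basepoint component has fundamental group exactly $H \cap \ker \phi$. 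Extending a spanning tree of this component to a spanning tree of the total cover exhibits $H \cap \ker \phi$ as a free factor of $\ker \phi$. The step I expect to be the main obstacle is orchestrating this two-cover construction: the normality of $\ker \phi$ in $G'$ is essential for the basepoint component of the preimage to have fundamental group $H \cap \ker \phi$ rather than a conjugate, and neither (2) nor (3) alone suffices to promote the embedding through the second cover.
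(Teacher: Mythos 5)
Your proof is correct and follows the same overall strategy as the paper: unique path lifting and covering-space arguments for (1)--(3), and reduction via the cover of $X$ corresponding to $G' = \phi^{-1}(\phi(H))$ in (4). The only genuine difference is the last step of (4). The paper, after establishing that $Y$ embeds in the cover corresponding to $G'$ (so $H$ is a free factor of $G'$), finishes by invoking a lemma of Karrass--Solitar that a free factor of $G'$ intersects any (finite-index) subgroup of $G'$ in a free factor. You instead pass to a second cover, the one of $\hat X'$ corresponding to $\ker\phi$, observe that the preimage of the embedded copy of $Y$ is an embedded subgraph whose basepoint component has fundamental group $H\cap\ker\phi$, and conclude by extending a spanning tree. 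This is a perfectly valid, self-contained alternative; it even sidesteps the fact that the cited Karrass--Solitar lemma is stated for finite-index subgroups while $\ker\phi$ need not be of finite index in $G'$ (the lemma is true in general by Kurosh, so the paper's proof is still fine, but your route avoids the point entirely). One small inaccuracy in your commentary: normality of $\ker\phi$ in $G'$ is not actually needed for the basepoint component of the preimage of $Y$ to have fundamental group $H\cap\ker\phi$. That is a standard fact for any cover with compatibly chosen basepoints; normality only gives regularity of the cover, which your argument does not use. This does not affect correctness, since the normality you invoke does hold, but it is not the crux you suggest it is.
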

\begin{proof}
(1) Suppose that there exists $g\in \mathcal A_{\rho}\cap H$. 
Then $g$ is represented by a loop $\gamma$ which can be expressed 
as a concatenation $\gamma_1\cdot \gamma_2$ as in Definition~\ref{defn:oppressive}. 
Since $\rho$ is a combinatorial immersion, there is a unique path $\mu_1$ 
starting at $y_0$ such that $\rho(\mu_1) =\gamma_1$, 
and there is a unique path $\mu_2$ ending at $y_0$ 
such that $\rho(\mu_2) =\gamma_2$. 
Since $g\in H$ the path $\mu_1$ must end at the same vertex as $\mu_2$ starts. 
This is a contradiction.

(2) Suppose $\mathcal A_\rho$ is not empty. That means that there exist a path $\mu_1$ joining vertices $y_0$ and $y_1$ in $Y$ and a path $\mu_2$ joining vertices $y_2$ and $y_0$ where $y_2\neq y_0,y_1$ such that $\rho(\mu_1)\cdot \rho(\mu_2)$ is a closed path. That means that $\rho(y_1) = \rho(y_2)$, i.e.\ $\rho$ is not an embedding. 
Conversely, suppose that $\rho$ is not an embedding, and let $y_1, y_2\in Y$ such that $\rho(y_1) = \rho(y_2)$ . 
Then the image under $\rho$ of a simple path from $y_0$ to $y_1$ 
concatenated with the image of a simple path going $y_2$ back to $y_0$ 
lifts to a closed path in $X$. That path corresponds to an element of $\mathcal A_{\rho}$.

(3) We first prove that $\mathcal A_{\hat\rho}\subseteq \mathcal A_\rho \cap \pi_1(\hat X, \hat x_0)$. 
By definition $\mathcal A_{\hat\rho}\subseteq \pi_1(\hat X, \hat x_0)$. 
Let $g\in \mathcal A_{\hat\rho}$ be represented by a cycle $\hat\gamma_1\cdot \hat\gamma_2$ in $\hat X$ as in Definition~\ref{defn:oppressive}. 
Then $\hat\gamma_1\cdot \hat\gamma_2$ maps to a cycle $\gamma_1\cdot\gamma_2$ in $X$ which still satisfies Definition~\ref{defn:oppressive} and so, $g\in \mathcal A_{\rho}$.
Conversely,  let $g\in \mathcal A_\rho \cap \pi_1(\hat X, \hat x_0)$ be represented by a cycle $\gamma_1\cdot \gamma_2$ in $X$. Since $g\in \pi_1(\hat X, \hat x_0)$ the cycle lifts to a cycle $\hat\gamma_1\cdot\hat \gamma_2$ in $\hat X$ based at $\hat x_0$. It follows that $g\in \mathcal A_{\hat \rho}$.

(4) Since $\phi$ separates $H$ from $\mathcal A_{\rho}$, the group $G':=\phi^{-1}\left(\phi(H)\right)$ contains $H$ but does not contain any element of $\mathcal A_{\rho}$. 
Let $(\hat X, \hat x_0)\to (X,x_0)$ be a cover corresponding to $G'$. 
Since $H\subseteq G'$, the map $\rho$ factors through $\hat \rho:(Y, y_0)\to (\hat X, \hat x_0)$. 
Since $\mathcal A_{\rho}\cap G' = \emptyset$, by (3) $\mathcal A_{\hat \rho} = \emptyset$. By (2) $\hat\rho$ is an embedding. Thus $H$ is a free factor of $G'$. 
By Lemma~\ref{lem:monotonicity}, $H\cap \ker \phi$ is a free factor in $\ker\phi$.
\end{proof}

%\begin{proof}[Proof of Lemma~\ref{lem:marshall hall}]
%We first show that every finite index subgroup $\hat G$ of $G$ that contains $H$ and 
%does not contain any element of $\mathcal A$ 
%corresponds to a finite cover $\hat X \to X$ where 
%$Y$ embeds in $\hat X$ and
%$\hat X\to X$ restricted to $Y$ is equal $\rho$.
%Indeed, since $H\subset \hat G$, for every cycle of $Y$ 
%the corresponding path in $X$ must lift to a closed path in $\hat X$.
%This defines a map from $Y\to \hat X$. 
%If this is not an embedding then there must exists two distinct vertices $y_1, y_2\in Y$ identified in $\hat X$. 
%Then the image of a simple path from $y_0$ to $y_1$ 
%concatenated with the image of a simple path going $y_2$ back to $y_0$ 
%lifts to a closed path in $\hat X$. 
%This is impossible by the assumption that 
%$\hat G$ does not contain any elements of $\mathcal A$. 
%Thus $Y$ embeds in $\hat X$ such that the covering map $\hat X \to X$ 
%restricted to $Y$ is the combinatorial immersion 
%inducing the inclusion $H\hookrightarrow G$. 
%Consequently $H$ is a free factor of $\hat G$. 
%
%Let now $\phi$ be a quotient homomorphism as assumed. 
%The group $\phi^{-1}\left(\phi(H)\right)$ has finite index in $G$, 
%and it contains $H$ but does not contain any element of $\mathcal A$. 
%Thus $H$ is a free factor of $\phi^{-1}\left(\phi(H)\right)$. 
%By Lemma~\ref{lem:monotonicity}, $H\cap \ker \phi$ is a free factor in $\ker\phi$.
%\end{proof}

The following Lemma will be used to verify that certain quotients separate a subgroup from its oppressive set.
\begin{lem}\label{lem:embedded universal covers}
Let $\rho:(Y, y_0)\to (X, x_0)$ be a combinatorial immersion of graphs where $x_0$ is the unique vertex of $X$. 
Let $Y_{\bullet}, X_{\bullet}$ be $2$-complexes with the $1$-skeletons $ Y_{\bullet}^{(1)} = Y$ and $X_{\bullet}^{(1)} = X$,
and let $\rho_{\bullet}:(Y_{\bullet}, y_0)\to (X_{\bullet}, x_0)$ be a map extending $\rho$. 
Let $\phi:\pi_1(X, x_0)\to \pi_1(X_{\bullet}, x_0)$ be the natural quotient
and suppose that $\phi(\pi_1 (Y, y_0)) = (\rho_{\bullet})_*(\pi_1 (Y_{\bullet}, y_0))$.
If the lift to the universal covers $\widetilde \rho_\bullet:\widetilde{Y_{\bullet}} \to \widetilde{X_{\bullet}}$ of $\rho_{\bullet}$ is an embedding, 
then $\phi$ separates $\pi_1(Y, y_0)$ from $\mathcal A_{\rho}$.
\end{lem}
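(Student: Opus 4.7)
The plan is to proceed by contradiction. Suppose $g\in \mathcal A_\rho$ satisfies $\phi(g)\in \phi(\pi_1(Y,y_0))$, and fix a representative cycle $\gamma = \rho(\mu_1)\cdot \rho(\mu_2)$ as in Definition~\ref{defn:oppressive}, with $\mu_1$ running from $y_0$ to $y_1$ and $\mu_2$ either trivial or running from $y_2$ to $y_0$, with $y_1\neq y_0$ and, in the nontrivial case, $y_1\neq y_2\neq y_0$. Using the hypothesis $\phi(\pi_1(Y,y_0)) = (\rho_\bullet)_*(\pi_1(Y_\bullet,y_0))$, choose a loop $\delta$ in $Y$ at $y_0$ with $[\gamma]\cdot[\rho(\delta)]^{-1}\in\ker\phi$, so that $\gamma$ and $\rho(\delta)$ are homotopic in $X_\bullet$.

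Arrange basepoints in the universal covers so that $\widetilde{\rho_\bullet}(\tilde y_0)=\tilde x_0$. Lift $\gamma$ and $\rho(\delta)$ to paths in the $1$-skeleton $\widetilde{X_\bullet}^{(1)}$ starting at $\tilde x_0$; the homotopy above forces them to share a common endpoint $\tilde x_1$. Independently, lift $\delta$ to $\tilde\delta$ in $\widetilde{Y_\bullet}^{(1)}$ starting at $\tilde y_0$, with endpoint a lift $\tilde y_0'$ of $y_0$. Since $\widetilde{\rho_\bullet}$ is a basepoint-preserving lift of $\rho_\bullet$, the path $\widetilde{\rho_\bullet}(\tilde\delta)$ must agree with the already-constructed lift of $\rho(\delta)$, so $\widetilde{\rho_\bullet}(\tilde y_0')=\tilde x_1$. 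Similarly, lifting $\mu_1$ to $\tilde\mu_1$ in $\widetilde{Y_\bullet}^{(1)}$ starting at $\tilde y_0$ yields an endpoint $\tilde y_1$ above $y_1$, and $\widetilde{\rho_\bullet}(\tilde\mu_1)$ is the initial segment of the lift of $\gamma$; hence its midpoint equals $\widetilde{\rho_\bullet}(\tilde y_1)$.

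The contradiction is now produced by the injectivity of $\widetilde{\rho_\bullet}$. If $\mu_2$ is trivial, the endpoint of the lift of $\gamma$ is simultaneously $\widetilde{\rho_\bullet}(\tilde y_1)$ and $\tilde x_1 = \widetilde{\rho_\bullet}(\tilde y_0')$; the embedding gives $\tilde y_1=\tilde y_0'$ and hence $y_1=y_0$, contradicting $y_1\neq y_0$. Otherwise, reverse-lift $\mu_2^{-1}$ in $\widetilde{Y_\bullet}^{(1)}$ starting at $\tilde y_0'$, obtaining a terminal vertex $\tilde y_2$ above $y_2$. Applying $\widetilde{\rho_\bullet}$ gives a lift of $\rho(\mu_2^{-1})$ in $\widetilde{X_\bullet}$ starting at $\tilde x_1$, which by unique path lifting is the reverse of the terminal segment of the lift of $\gamma$; its endpoint is therefore $\widetilde{\rho_\bullet}(\tilde y_1)$, so $\widetilde{\rho_\bullet}(\tilde y_2) = \widetilde{\rho_\bullet}(\tilde y_1)$, and the embedding property forces $y_1=y_2$, the desired contradiction.

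The bookkeeping is the main obstacle: one must lift the pieces of $\gamma$ into $\widetilde{Y_\bullet}^{(1)}$ from the correct starting vertices so that the embedding of $\widetilde{\rho_\bullet}$ bites on two vertices projecting to distinct points of $Y$, and the trivial-$\mu_2$ case must be handled separately. Conceptually, the embedding hypothesis converts the presumed homotopy $\gamma\simeq\rho(\delta)$ in $X_\bullet$ into a vertex identification in $Y$ that is exactly what Definition~\ref{defn:oppressive} rules out.
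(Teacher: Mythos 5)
Your argument is correct and follows essentially the same line of reasoning as the paper: using the embedding $\widetilde{\rho_\bullet}$ and uniqueness of path lifting to show that the contradiction assumption $\phi(g)\in\phi(\pi_1(Y,y_0))$ would force $\mu_1\cdot\mu_2$ to close up into a cycle in $Y$, which Definition~\ref{defn:oppressive} forbids. The main cosmetic difference is that you produce the target vertex $\tilde y_0'$ by lifting an auxiliary loop $\delta$, whereas the paper identifies the vertices of $\widetilde{X_\bullet}$ with $\pi_1(X_\bullet,x_0)$ and works directly with the vertex $g.p$; you also treat the trivial-$\mu_2$ case explicitly, which the paper leaves implicit.
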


\begin{proof}
The vertex set of $\widetilde{X_{\bullet}}$ can be identified with $\pi_1X_{\bullet}$. By assumption, we can view $\widetilde{Y_{\bullet}}$ as a subcomplex of $\widetilde{X_{\bullet}}$ whose vertex set contains vertices corresponding to $\phi(\pi_1 (Y, y_0)) = (\rho_{\bullet})_*(\pi_1 (Y_{\bullet}, y_0))\subseteq \pi_1(X_{\bullet}, x_0)$. %can be identified with the subset of $\widetilde{X_{\bullet}}$ corresponding to $\pi_1Y_{\bullet}$. 
Let $p$ the base vertex of $\widetilde{X_{\bullet}}$ representing the trivial element $e\in \widetilde{X_{\bullet}}$.

% is identified with a subset of $\widetilde{Y_{\bullet}}\subseteq \widetilde{X_{\bullet}}$ 
%which is the preimage of a single vertex $\widetilde y_0$ in $Y_{\bullet}$ under $(\widetilde {Y_{\bullet}}, \widetilde y_0)\to (Y_{\bullet}, y_0)$. 

Let $g\in \mathcal A_{\rho}$ be represented by a cycle $\gamma = \gamma_1\cdot \gamma _2$ in $X$
with $\gamma_i = \rho(\mu_i)$ as in Definition~\ref{defn:oppressive}, i.e.\ $\mu_1$ is a non-trivial simple path in $Y$ starting at $y_0$ and ending at some $y_1\neq y_0$, and $\mu_2$ is either trivial or it is a simple path in $Y$ starting at some $y_2\neq y_0, y_1$ and ending at $y_0$. The path $\mu_1$ lifts to unique paths $\widetilde \mu_1$ starting at $p$ in $\widetilde {Y_\bullet}\subseteq \widetilde {X_{\bullet}}$.
Similarly, the path $\mu_2$ lifts to unique path $\widetilde \mu_2$ ending at $g.p$ in $\widetilde {Y_\bullet}\subseteq \widetilde {X_{\bullet}}$.
To prove that $\phi$ separates $\pi_1(Y, y_0)$ from $\mathcal A_{\rho}$, we need to show that $\phi(g)\notin (\rho_\bullet)_* (\pi_1(Y_\bullet, y_0))$.
%The element $\phi(g)$ can be represented in $\widetilde{X_{\bullet}}$ as a path which is a concatenation of lifts of $\gamma_1,\gamma_2$. If 
%Consider the lift $\widetilde \mu_1$ of $\mu_1$ to $\widetilde Y_\bullet$ which starts at $\widetilde y_0$ and ends at some lift $\widetilde y_1$ of $y_1$.
%Similarly let $\widetilde \mu_2$ be a lift of $\mu_2$ to $\widetilde Y_\bullet$ which ends at $\widetilde y_0$ and starts at some lift $\widetilde y_2$ of $y_2$.
%Since $\widetilde{Y_{\bullet}} \to \widetilde{X_{\bullet}}$ is an embedding $\widetilde \mu_1$ and $\widetilde y_2$ can be viewed as paths in $\widetilde{X_{\bullet}}$ which 

Suppose to the contrary, that $\phi(g) \in (\rho_\bullet)_* (\pi_1(Y_\bullet, y_0))$.
%The path $\mu_1$ lifts to unique paths $\widetilde \mu_1$ starting at $p$ in $\widetilde Y_\bullet\subseteq \widetilde X_{\bullet}$.
%Similarly, the path $\mu_2$ lifts to unique path $\widetilde \mu_2$ ending at $g.p$ in $\widetilde Y_\bullet\subseteq \widetilde X_{\bullet}$.
That means that $\widetilde \mu_2$ starts where $\widetilde \mu_1$ end, so the concatenation $\widetilde \mu_1\cdot\widetilde \mu_2$ is a path from $p$ to $g.p$.
Since $\widetilde \mu_1\cdot\widetilde \mu_2$ projects onto $\mu_1\cdot \mu_2$ in $Y\subseteq Y_{\bullet}$, we conclude that $\mu_1\cdot \mu_2$ is a cycle in $Y$, which is a contradiction.
%Thus $g\in \pi_1 Y$,  
%which is impossible by Proposition~\ref{prop:oppressive set properties}(1).
\end{proof}

%%%%%%%%%%%%%%%%%%%%%%%%%%
\subsection{Residual finiteness of a twisted double}
%%%%%%%%%%%%%%%%%%%%%%%%%%
Throughout this section $A$ is a finite rank free group, $C<A$ is a finitely generated subgroup and $\beta:C
\to C$ is an automorphism.

\begin{defn}
The \emph{double of $A$ along $C$ twisted by $\beta$}, 
denoted by $D(A,C, \beta)$ 
is a free product with amalgamation $A*_CA$ 
where $C$ is mapped to the first factor via the natural inclusion $C\hookrightarrow A$, 
and to the second factor via the natural inclusion precomposed with $\beta$.
\end{defn}

\begin{prop}\label{prop:residual finiteness of a double}
Let $\mathcal A$ be an oppressive set for $C$ in $A$. 
Suppose there exists a finite quotient $\Psi:D(A,C,\beta)\to K$ 
such that $\Psi|_{A}:A\to K$ separates $C$ from $\mathcal A$. 
Then $D(A,C,\beta)$ virtually splits as an algebraically clean graph of finite rank free groups. 
In particular, $D(A,C,\beta)$ is residually finite.
\end{prop}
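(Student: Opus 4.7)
Set $G := D(A,C,\beta)$ and $N := \ker\Psi$, a normal subgroup of finite index in $G$. The plan is to show that $N$ itself admits a finite algebraically clean graph-of-groups splitting with finite rank free vertex and edge groups, and then apply Wise's Theorem~\ref{thm:wise clean}; residual finiteness of $G$ follows automatically, since $G$ contains $N$ as a subgroup of finite index and residual finiteness passes up from finite-index subgroups.

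By Bass-Serre theory, the action of $N$ on the Bass-Serre tree $T$ of the amalgam $A_1 *_C A_2$ yields a graph-of-groups decomposition of $N$ over the finite quotient graph $N\backslash T$. Since $N$ is normal in $G$, every vertex stabilizer $N \cap g A_i g^{-1}$ equals $g (N \cap A_i) g^{-1}$, hence is isomorphic to $N \cap A_i$; the analogous statement holds for edge stabilizers, which are all isomorphic to $N \cap C$. As finite-index subgroups of the finite rank free groups $A_i$ and $C$, these are themselves finite rank free, so the vertex-group and edge-group part of the algebraic-cleanliness requirement is satisfied.

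For the remaining free-factor condition, I need to check that at each edge of $N \backslash T$ the edge group embeds as a free factor in each of its two adjacent vertex groups. The hypothesis that $\Psi|_A: A \to K$ separates $C$ from the oppressive set $\mathcal A$, combined with Proposition~\ref{prop:oppressive set properties}(4), gives exactly that $C \cap \ker(\Psi|_A)$ is a free factor in $\ker(\Psi|_A)$; translated back to $G$ via $\iota_1$, this says $N \cap C$ is a free factor of $N \cap A_1$. The symmetric statement on the $A_2$-side should follow by the same mechanism, using that $\beta$ is an automorphism of $C$ so that the two restrictions $\Psi \circ \iota_1$ and $\Psi \circ \iota_2$ differ only by precomposition with $\beta$ on $C$ and thus both exhibit the edge group as a free factor of the corresponding vertex group.

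With algebraic cleanliness verified, Theorem~\ref{thm:wise clean} applies and shows that $N$ is residually finite, which forces $G = D(A,C,\beta)$ to be residually finite as well. The main technical obstacle I anticipate lies precisely in the last step of the previous paragraph: the twisting by $\beta$ means the edge-to-vertex inclusions on the two factors are genuinely distinct, so making the free-factor condition hold on both sides simultaneously is not entirely automatic. Resolving this may require either interpreting $\Psi|_A$ as giving separation via both inclusions $\iota_1, \iota_2$ (perhaps after replacing $\Psi$ by a larger finite quotient that refines both restrictions), or explicitly using the automorphism property of $\beta$ to transfer the free-factor condition obtained on the $A_1$-side to the $A_2$-side through the amalgamation relation $\iota_1(c) = \iota_2(\beta(c))$.
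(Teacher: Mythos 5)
Your proof is essentially the paper's proof: pass to $N=\ker\Psi$, observe that $N$ acts on the Bass--Serre tree with free vertex and edge stabilizers, invoke Proposition~\ref{prop:oppressive set properties}(4) to get the free-factor condition, and finish with Theorem~\ref{thm:wise clean}. The subtlety you flag at the end is real, and worth being precise about. Since $D(A,C,\beta)=A_1*_CA_2$ has two copies of $A$, a homomorphism $\Psi$ gives two restrictions $\Psi|_{A_1},\Psi|_{A_2}\colon A\to K$ that are forced to agree only after restricting to $C$ (via $\Psi|_{A_1}(c)=\Psi|_{A_2}(\beta(c))$); off $C$ they can be genuinely different. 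So the hypothesis ``$\Psi|_A$ separates $C$ from $\mathcal A$'' must be read as saying \emph{both} restrictions separate, and that is what yields the free-factor condition at both ends of each edge. The paper's own proof is equally terse here (it writes $\ker\Psi\cap A$ as if $A$ were a single subgroup), but this reading is consistent with how the proposition is deployed in Theorem~\ref{thm:conditions for rf}: since $\mathcal A$ is finite and $\bar C$ is separable in the QCERF double $D(\bar A,\bar C,\bar\beta)$, one separates $\bar C$ from both copies of $\phi(\mathcal A)$ simultaneously by a single finite quotient.

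Of your two proposed resolutions, the first (require separation via both restrictions) is the correct one. The second (transferring the free-factor condition from $A_1$ to $A_2$ through the amalgamation relation using $\beta$) does not work on its own: the relation $\Psi|_{A_1}=\Psi|_{A_2}\circ\beta$ holds only on $C$, and Proposition~\ref{prop:oppressive set properties}(4) needs control of $\Psi|_{A_2}$ on the oppressive set $\mathcal A\subseteq A\setminus C$, which the relation says nothing about. So one cannot deduce the $A_2$-side condition from the $A_1$-side condition; it must be imposed as part of the hypothesis, as above.
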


\begin{proof} 
The group $D(A,C,\beta)$ acts on its Bass-Serre tree $T$ with vertex stabilizers conjugate to $A$, 
and edge stabilizers conjugate to $C$. 
The group $\ker \Psi$ acts on $T$ with a finite fundamental domain, 
since the index of $\ker \Psi$ in $D(A,C,\beta)$ is finite. 
The vertex stabilizers are conjugates of $\ker \Psi\cap A = \ker \Psi|_A$, 
and the edge stabilizers are conjugates of $\ker \Psi\cap C = \ker\Psi|_A\cap C$. 
By Proposition~\ref{prop:oppressive set properties}(4), $\ker \Psi|_A\cap C$ is a free factor in $\ker \Psi|_ A$,
i.e.\ every edge stabilizer is a free factor 
in each respective vertex stabilizers of the action of $\ker \Psi$ on $T$.
In particular, $\ker \Psi$ splits as a clean graph of free groups, 
so by Theorem~\ref{thm:wise clean} $\ker \Psi$ is residually finite. 
Since $\ker \Psi$ has finite index in $D(A, C,\beta)$ the conclusion follows.
\end{proof}

A subgroup $H$ is \emph{malnormal} in $G$, 
if for every $g\in G-H$ we have $H^g\cap H = \{1\}$, where $H^g := g^{-1}Hg$. 
More generally, a collection $\{H_1,\dots, H_n\}$ of subgroups of $G$ is \emph{malnormal} in $G$, 
if for every $1\leq i,j\leq n$ and $g\in G$, we have $H_i^g\cap H_j = \{1\}$, unless $i=j$ and $g\in H_i$.

Let $\phi:A\to \bar A$ be a quotient and let $\bar C := \phi(C)$. 
The automorphism $\beta:C\to C$ projects to an automorphism $\bar \beta:\bar C \to \bar C$
if and only if $\beta(C\cap\ker\phi) = C\cap\ker\phi$. 
When that is the case, then
$\phi$ induces a quotient $\Phi:D(A,C,\beta) \to D(\bar A, \bar C,\bar \beta)$.

\begin{thm}\label{thm:conditions for rf}
Suppose there exists a quotient $\phi:A\to \bar A$ such that 
\begin{enumerate}
\item $\bar A$ is a virtually special hyperbolic group,
\item $\bar C:=\phi(C)$ is malnormal and quasiconvex in $\bar A$,
\item $\phi$ separates $C$ from an oppressive set $\mathcal A$ of $C$ in $A$,
\item $\beta$ projects to an automorphism $\bar\beta:\bar C \to \bar C$.
\end{enumerate}
Then $D(A,C,\beta)$ virtually splits as an algebraically clean graph of finite rank free groups. 
In particular, $D(A,C,\beta)$ is residually finite.
\end{thm}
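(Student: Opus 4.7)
The plan is to reduce to Proposition~\ref{prop:residual finiteness of a double} by constructing a finite quotient $\Psi:D(A,C,\beta)\to K$ whose restriction $\Psi|_A$ separates $C$ from the oppressive set $\mathcal A$. The key preliminary observation is that $\mathcal A=\mathcal A_{\rho}$ is finite: it is indexed by pairs of simple paths in the finite graph $Y$, each contributing at most one element of $G$. So it is enough to produce a single finite quotient that simultaneously separates the finitely many elements of $\mathcal A$ from $C$.

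Hypothesis (4) gives the natural quotient $\Phi:D(A,C,\beta)\to D(\bar A,\bar C,\bar\beta)$ with $\Phi|_A=\phi$. I would first show that $D(\bar A,\bar C,\bar\beta)$ is virtually special hyperbolic. Since $\bar\beta$ is an automorphism of $\bar C$, the image of $\bar C$ under the twisted inclusion into the second factor is still the subgroup $\bar C\leq\bar A$, so malnormality from hypothesis (2) applies in both vertex groups; combined with hyperbolicity of $\bar A$ from (1) and quasiconvexity of $\bar C$ from (2), the Bestvina--Feighn combination theorem \cite{BestvinaFeighn92} gives that $D(\bar A,\bar C,\bar\beta)$ is hyperbolic, and Hsu--Wise \cite{HsuWiseCubulatingMalnormal} upgrade this to virtual specialness.

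Next, I would observe that $\bar C$ is quasiconvex in $D(\bar A,\bar C,\bar\beta)$, as an edge group that is quasiconvex in each vertex group of a hyperbolic graph of hyperbolic groups, and invoke separability of quasiconvex subgroups in virtually special hyperbolic groups to produce a finite quotient $\bar\psi:D(\bar A,\bar C,\bar\beta)\to K$ separating $\bar C$ from the finite set $\Phi(\mathcal A)$. This is possible because hypothesis (3) guarantees $\Phi(g)=\phi(g)\notin\bar C$ for every $g\in\mathcal A$. Setting $\Psi:=\bar\psi\circ\Phi$, the restriction $\Psi|_A=\bar\psi\circ\phi$ separates $C$ from $\mathcal A$, so Proposition~\ref{prop:residual finiteness of a double} yields the clean virtual splitting and residual finiteness.

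The main obstacle is more bookkeeping than mathematics: checking that the chain of combination and specialness results applies cleanly, particularly that $\bar C$ retains its hypotheses after the twist and inside the amalgam. Beyond that, the argument is essentially a composition of three well-known tools (Bestvina--Feighn, Hsu--Wise, and separability of quasiconvex subgroups of virtually special hyperbolic groups), held together by the finiteness of the oppressive set $\mathcal A_{\rho}$.
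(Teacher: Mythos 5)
Your argument is essentially identical to the paper's: extend $\phi$ to $\Phi:D(A,C,\beta)\to D(\bar A,\bar C,\bar\beta)$, invoke Bestvina--Feighn for hyperbolicity and Hsu--Wise for cubulation (the paper then cites Haglund--Wise for virtual specialness, where you attribute that step to Hsu--Wise, a minor citation slip), use separability of the quasiconvex subgroup $\bar C$, and finish with Proposition~\ref{prop:residual finiteness of a double}. Your explicit observation that $\mathcal A_{\rho}$ is finite (because $Y$ is a finite graph and each element arises from a pair of simple paths) is a helpful point that the paper uses implicitly but does not spell out.
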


\begin{proof}
Condition (4) ensures that $\phi$ extends to the quotient $\Phi:D(A,C,\beta)\to D(\bar A, \bar C,\bar \beta)$. 
Since $\phi$ separates $C$ from $\mathcal A$, 
the set $\phi(\mathcal A) = \{\phi(a) \mid a\in \mathcal A\}\subseteq \bar A$ is disjoint from $\bar C$. 
By Bestvina-Feighn~\cite{BestvinaFeighn92} $D(\bar A, \bar C,\bar\beta)$ is hyperbolic, 
since it is a free product of  two copies of a hyperbolic group $\bar A$ amalgamated along a subgroup $\bar C$ 
which is malnormal and quasiconvex in each of the factors (see also~\cite{KharlampovichMyasnikov98}). 
Since $\bar A$ is virtually cocompactly special, 
by Hsu-Wise~\cite{HsuWiseCubulatingMalnormal} $D(\bar A, \bar C,\bar\beta)$ is cocompactly cubulated. 
Then by Haglund-Wise~\cite{HaglundWiseAmalgams} $D(\bar A, \bar C,\bar\beta)$ is virtually special 
and in particular QCERF \cite{HaglundWiseSpecial}. 
Thus, $\bar C$ is separable in $D(\bar A, \bar C,\bar\beta)$. 
There exists a finite quotient $\Psi:D(\bar A, \bar C,\bar\beta)\to K$ such that 
$\Psi|_{\bar A}$ separates $\bar C$ from $\phi(\mathcal A)$. 
Thus the composition $\Psi\circ \Phi|_{A}:A\to K$ separates $C$ from $\mathcal A$.
The quotient $\Psi\circ \Phi:D(A,C,\beta)\to K$ satisfies the assumptions 
of Proposition~\ref{prop:residual finiteness of a double}.
Hence $D(A,C,\beta)$ is residually finite.
\end{proof}

In our application of Theorem~\ref{thm:conditions for rf}, Condition (4) will be verified using the following.
\begin{obs}\label{obs:beta bar}
Let $Z$ be a finite graph and let $b:(Z,z_0)\to (Z,z_1)$ be a graph automorphism.
Then $b$ together with a choice of a path from $z_0$ to $z_1$ 
induces an automorphism $\beta:\pi_1(Z, z_0)\to \pi_1 (Z,z_0)$. 
 If $Z_{\bullet}$ is a finite $2$-complex with the $1$-skeleton $Z$ 
 such that $b$ extends to $b_{\bullet}:Z_{\bullet}\to Z_{\bullet}$, 
 then $\beta$ projects to an automorphism $\beta_{\bullet}:\pi_1(Z_{\bullet}, y_0)\to \pi_1 (Z_{\bullet}, y_0)$.
\end{obs}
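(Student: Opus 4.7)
The plan is to invoke the functoriality of $\pi_1$ applied to the inclusion $\iota:Z\hookrightarrow Z_\bullet$ together with the identity $b_\bullet\circ \iota = \iota\circ b$ of pointed continuous maps $(Z, z_0)\to (Z_\bullet, z_1)$. The inclusion $\iota$ induces a surjective quotient $q:\pi_1(Z, z_0)\twoheadrightarrow \pi_1(Z_\bullet, z_0)$ whose kernel is the normal closure of the attaching words of the $2$-cells of $Z_\bullet$; the content of the observation is then that $\beta$ preserves $\ker q$ and that the induced map on the quotient coincides with $\beta_\bullet$.

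First I would fix once and for all a path $\gamma$ from $z_0$ to $z_1$ in $Z$, use it to convert the basepoint-shifting isomorphism $b_*:\pi_1(Z,z_0)\to \pi_1(Z,z_1)$ into the automorphism $\beta$, and then use $\iota(\gamma)$, viewed now as a path in $Z_\bullet$, to convert $(b_\bullet)_*$ into $\beta_\bullet$. Using the \emph{same} path on both sides is essential. From naturality of $\pi_1$ with respect to $\iota$, together with the fact that $\iota_*$ intertwines conjugation by the class of $\gamma$ with conjugation by the class of $\iota(\gamma)$, one obtains the commutative square
\[
\begin{array}{ccc}
\pi_1(Z, z_0) & \xrightarrow{\;\beta\;} & \pi_1(Z, z_0)\\
q\downarrow & & \downarrow q\\
\pi_1(Z_\bullet, z_0) & \xrightarrow{\beta_\bullet} & \pi_1(Z_\bullet, z_0).
\end{array}
\]
Commutativity forces $\beta(\ker q)\subseteq \ker q$, so $\beta$ descends through $q$ to a well-defined endomorphism of $\pi_1(Z_\bullet, z_0)$, and that endomorphism equals $\beta_\bullet$ by construction. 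Applying the same argument to $b_\bullet^{-1}$ produces a two-sided inverse of $\beta_\bullet$, confirming it is an automorphism.

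There is essentially no mathematical obstacle here; the statement is a bookkeeping consequence of the functoriality of $\pi_1$. The only point requiring genuine care is to verify that one uses the same path $\gamma$ in defining $\beta$ and $\beta_\bullet$, so that the two vertical arrows of the square are literally the same homomorphism $q$ and the diagram commutes on the nose rather than merely up to an inner automorphism of $\pi_1(Z_\bullet, z_0)$. Given this bookkeeping, the conclusion is immediate.
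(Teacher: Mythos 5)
The paper states this as an unproved observation, so there is nothing to compare your argument against directly. Your commutative-square argument via naturality of $\pi_1$ is correct and captures the substance of the claim: from $b_\bullet\circ\iota=\iota\circ b$ and the fact that $\iota_*$ intertwines conjugation by $[\gamma]$ with conjugation by $[\iota\gamma]$, one gets $q\circ\beta=\beta_\bullet\circ q$, hence $\beta(\ker q)\subseteq\ker q$, so $\beta$ descends to a surjective endomorphism $\beta_\bullet$ of $\pi_1(Z_\bullet,z_0)$. That part is fine.

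The one step that needs more care is the last sentence, where you invoke $b_\bullet^{-1}$. The hypothesis only says $b$ extends to some map $b_\bullet:Z_\bullet\to Z_\bullet$; nothing forces $b_\bullet$ to be invertible (a continuous extension of a graph automorphism across the $2$-cells need not be a bijection), so $b_\bullet^{-1}$ may not exist. And the square alone gives only $\beta(\ker q)\subseteq\ker q$, not equality; since $\pi_1(Z_\bullet,z_0)$ need not be Hopfian in general, surjectivity of $\beta_\bullet$ does not by itself give injectivity. There is a clean fix that sidesteps $b_\bullet^{-1}$ entirely: $b$ is an automorphism of a finite graph, so it has finite order $k$, and then $\beta^k=c_w$ is conjugation by $w=[\gamma\cdot b(\gamma)\cdots b^{k-1}(\gamma)]$, a loop at $z_0$ since $b^k(z_0)=z_0$. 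Inner automorphisms descend through any quotient by a normal subgroup, so from $q\circ\beta^k=\beta_\bullet^k\circ q$ and surjectivity of $q$ one gets $\beta_\bullet^k=c_{q(w)}$, which is bijective; injectivity of $\beta_\bullet^k$ forces injectivity of $\beta_\bullet$, and combined with the surjectivity you already have, $\beta_\bullet$ is an automorphism. (If one instead reads the observation as implicitly assuming $b_\bullet$ is a cellular automorphism --- which is the case in every application in the paper --- your original closing step is fine as written.)
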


%%%%%%%%%%%%%%%%%%%%%%%%%%
\subsection{Residual finiteness of an HNN extension}
%%%%%%%%%%%%%%%%%%%%%%%%%%
Let $A, B$ be finite rank free groups.  
For $i=1,2$ let $\beta_i:B\to A$ denote an injective homomorphism, and denote $B_i=\beta_i(B)$.
Let $\beta$ denote the isomorphism $\beta_2\cdot\beta_1^{-1}:B_1\to B_2$.
By $A*_{B}$  we denote the HNN extension of $A$ with respect to $\{\beta_1, \beta_2\}$, i.e.
\[
A*_{B} = \langle A, t\mid t^{-1}bt = \beta(b) \text{ for all }b\in B_1\rangle
\]
\begin{com}Is that the def of hnn extension we want? Is the abuse of the notation ok? B or C?\end{com}
Let $X$ be a bouquet of loops, with $\pi_1X$ identified with $A$,
and for $i=1,2$ let  $\rho_i:Y_i\to X$ be a combinatorial immersion inducing the inclusion $B_i\hookrightarrow A$.
Let $\mathcal A_{\rho_1},\mathcal A_{\rho_2}$ be the oppressive sets for $B_1,B_2$ with respect to $\rho_1, \rho_2$ respectively. 

\begin{prop}\label{prop:residual finiteness of hnn}
Suppose there exists a quotient $\Psi: A*_B\to K$ such that $\Psi|_{A}:A\to K$ separates $B_1$ from $\mathcal A_1$, and $B_2$ from $\mathcal A_2$. Then $A*_B$  virtually splits as an algebraically clean graph of finite rank free groups. 
In particular, $A*_B$ is residually finite.
\end{prop}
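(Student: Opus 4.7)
The plan is to mimic the proof of Proposition~\ref{prop:residual finiteness of a double} in the HNN setting. Consider the Bass-Serre tree $T$ of $A*_B$: it has a single orbit of vertices, stabilized by conjugates of $A$, and a single orbit of edges, with edge stabilizers conjugate to $B_1$. Since $\Psi$ is a finite quotient, $\ker\Psi$ is a normal subgroup of finite index in $A*_B$, and so it acts on $T$ with finite quotient graph. The vertex stabilizers of this action are conjugates of $\ker\Psi \cap A = \ker(\Psi|_A)$, and the edge stabilizers are conjugates of $\ker\Psi \cap B_1$. Hence $\ker\Psi$ splits as a finite graph of finite rank free groups, and it remains to verify algebraic cleanness.

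At a fixed edge $e$ of $T$ with endpoints $v_1, v_2$, the two endpoint stabilizers in $A*_B$ are $A$ and $tAt^{-1}$, and both contain the edge stabilizer $B_1 = tB_2t^{-1}$. Restricting to $\ker\Psi$, the two inclusions at the endpoints are
\[
\ker\Psi \cap B_1 \hookrightarrow \ker\Psi \cap A \quad\text{and}\quad \ker\Psi \cap B_1 \hookrightarrow t(\ker\Psi \cap A)t^{-1},
\]
where we have used normality of $\ker\Psi$ to write $\ker\Psi\cap tAt^{-1} = t(\ker\Psi\cap A)t^{-1}$. Conjugating the second inclusion by $t^{-1}$ (which preserves the property of being a free factor) transforms it into $\ker\Psi \cap B_2 \hookrightarrow \ker\Psi \cap A$.

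By the separation hypothesis and Proposition~\ref{prop:oppressive set properties}(4), the quotient $\Psi|_A:A\to K$ separating $B_i$ from $\mathcal{A}_i$ implies that $\ker(\Psi|_A)\cap B_i$ is a free factor in $\ker(\Psi|_A)$ for each $i=1,2$. Thus every edge stabilizer in the splitting of $\ker\Psi$ is a free factor in each adjacent vertex stabilizer, so $\ker\Psi$ splits as a finite algebraically clean graph of finite rank free groups. By Theorem~\ref{thm:wise clean}, $\ker\Psi$ is residually finite, and since it has finite index in $A*_B$, so is $A*_B$.

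The main bookkeeping point, rather than a serious obstacle, is making sure that the two sides of the HNN identification give rise to the two distinct separation conditions: the edge group appears as $B_1$ at one endpoint and, after conjugation by $t^{-1}$, as $B_2$ at the other, so one really does need the hypothesis for both $B_1$ and $B_2$ separately. Beyond this, the argument is a direct adaptation of the amalgamation case.
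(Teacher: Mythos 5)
Your proof is correct and is exactly the elaboration of the paper's ``analogous'' argument: you run the Bass--Serre tree argument from Proposition~\ref{prop:residual finiteness of a double} and, in identifying the two endpoint inclusions of an edge stabilizer, correctly observe that one side gives $\ker\Psi\cap B_1\hookrightarrow\ker\Psi\cap A$ while the other, after conjugating by $t^{-1}$, gives $\ker\Psi\cap B_2\hookrightarrow\ker\Psi\cap A$, which is precisely why the hypothesis requires separation from both $\mathcal A_1$ and $\mathcal A_2$. Nothing to add.
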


\begin{proof}
The proof is analogous as for Proposition~\ref{prop:residual finiteness of a double}.
\end{proof}

\begin{thm}\label{thm:conditions for rf hnn}
Suppose there exists $\phi:A\to \bar A$ such that
\begin{enumerate}
\item $\bar A$ is a virtually special hyperbolic group,
\item $\bar B_i := \phi(B_i)$ is quasiconvex in $\bar A$ for $i=1,2$, and the collection $\{\bar B_1, \bar B_2\}$ is malnormal in $\bar A$,
\item $\phi$ separates $B_i$ from $\mathcal A_{\rho_i}$ for $i=1,2$,
\item $\beta$ projects to an isomorphism $\bar\beta:\bar B_1\to \bar B_2$.
\end{enumerate}
Then $A*_B$  virtually splits as an algebraically clean graph of finite rank free groups. 
In particular, $A*_B$ is residually finite.
\end{thm}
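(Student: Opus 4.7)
The plan is to mirror the proof of Theorem~\ref{thm:conditions for rf} with amalgams replaced by HNN extensions throughout. First, Condition (4) ensures that $\phi:A\to \bar A$ extends to a quotient $\Phi:A*_B \to \bar A *_{\bar B}$, where the target denotes the HNN extension of $\bar A$ along $\bar\beta:\bar B_1\to\bar B_2$. By Condition (3), the finite set $\phi(\mathcal A_{\rho_i})$ is disjoint from $\bar B_i$ for $i=1,2$.

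Next I would invoke the Bestvina-Feighn combination theorem~\cite{BestvinaFeighn92}: since $\bar A$ is hyperbolic by (1), each $\bar B_i$ is quasiconvex by (2), and the collection $\{\bar B_1,\bar B_2\}$ is malnormal in $\bar A$ (also by (2)), the HNN extension $\bar A *_{\bar B}$ is hyperbolic. Since $\bar A$ is virtually cocompactly special, Hsu-Wise~\cite{HsuWiseCubulatingMalnormal} then produces a cocompact action of $\bar A *_{\bar B}$ on a CAT(0) cube complex, Haglund-Wise~\cite{HaglundWiseAmalgams} upgrades this to virtual specialness, and~\cite{HaglundWiseSpecial} yields QCERF.

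In particular each $\bar B_i$ is separable in $\bar A*_{\bar B}$, so for each $i$ one can find a finite quotient of $\bar A*_{\bar B}$ whose restriction to $\bar A$ separates $\bar B_i$ from $\phi(\mathcal A_{\rho_i})$. Quotienting by the intersection of the two kernels produces a single finite quotient $\Psi:\bar A*_{\bar B}\to K$ whose restriction to $\bar A$ simultaneously separates $\bar B_i$ from $\phi(\mathcal A_{\rho_i})$ for both $i=1,2$. The composition $\Psi\circ\Phi|_A:A\to K$ then separates each $B_i$ from $\mathcal A_{\rho_i}$, so Proposition~\ref{prop:residual finiteness of hnn} applied to $\Psi\circ\Phi$ delivers the desired virtual algebraically clean splitting of $A*_B$ as a graph of finite rank free groups, and hence residual finiteness.

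The main point of care will be invoking the correct version of the Bestvina-Feighn combination theorem for HNN extensions, where the relevant hypothesis is the malnormality of the collection $\{\bar B_1,\bar B_2\}$ as in (2), rather than of a single subgroup as in the amalgam case; once this hyperbolicity step is in hand, the remainder is a direct adaptation of the argument for Theorem~\ref{thm:conditions for rf}.
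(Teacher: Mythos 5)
Your proposal is correct and follows exactly the approach the paper takes: the paper's own proof of Theorem~\ref{thm:conditions for rf hnn} is stated in one line as ``analogous to the proof of Theorem~\ref{thm:conditions for rf}, using Proposition~\ref{prop:residual finiteness of hnn} in place of Proposition~\ref{prop:residual finiteness of a double},'' and your write-up is a faithful unpacking of that sentence, including the point that the HNN form of the Bestvina--Feighn combination theorem requires malnormality of the pair $\{\bar B_1,\bar B_2\}$ rather than of a single subgroup. No gaps.
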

\begin{proof}
The proof is analogous as the proof of Theorem~\ref{thm:conditions for rf}. It uses Proposition~\ref{prop:residual finiteness of hnn} in the place of Proposition~\ref{prop:residual finiteness of a double}.
\end{proof}

%%%%%%%%%%%%%%%%%%%%%%%%%%%%%%%%%%%%%%%%%%%%%%%%%%%%
%%%%%%%%%%%%%%%%%%%%%%%%%%%%%%%%%%%%%%%%%%%%%%%%%%%%
\section{Artin groups and their Brady-McCammond complex}\label{sec:Artin}
%%%%%%%%%%%%%%%%%%%%%%%%%%%%%%%%%%%%%%%%%%%%%%%%%%%%
%%%%%%%%%%%%%%%%%%%%%%%%%%%%%%%%%%%%%%%%%%%%%%%%%%%%
In this section we describe a complex $X_{\Gamma}$ associated to a non-standard presentation of $\Art_{\Gamma}$
that was introduced and shown to be CAT(0) for many Artin groups 
by Brady-McCammond in~\cite{BradyMcCammond2000}. 
We then describe certain subspaces of $X_{\Gamma}$ 
that will be used in Section~\ref{sec:Decomposition} 
to prove that for certain $\Gamma$ the group $\Art_{\Gamma}$ splits 
as an amalgam of finite rank free groups. 
We start with the case of $2$-generator Artin group.

%%%%%%%%%%%%%%%%%%%%%%%%%%
\subsection{Brady-McCammond presentation for a 2-generator Artin group}\label{sec:2gen}
%%%%%%%%%%%%%%%%%%%%%%%%%%
Consider an Artin group on two generators
\[\Art_M = \langle a,b \mid (a,b)_M = (b,a)_M\rangle\]
where $M<\infty$.
By adding an extra generator $x$ and setting $x=ab$ we get another presentation

\begin{itemize}
	\item if $M = 2m$:
	\[\langle a,b,x\mid x=ab, x^m = bx^{m-1}a\rangle\]
	\item if $M=2m+1$:
	\[\langle a,b,x\mid x=ab, x^ma = bx^{m}\rangle\]
\end{itemize}
See Figure~\ref{fig:presentation}.
\begin{figure}
	\includegraphics[scale=0.2]{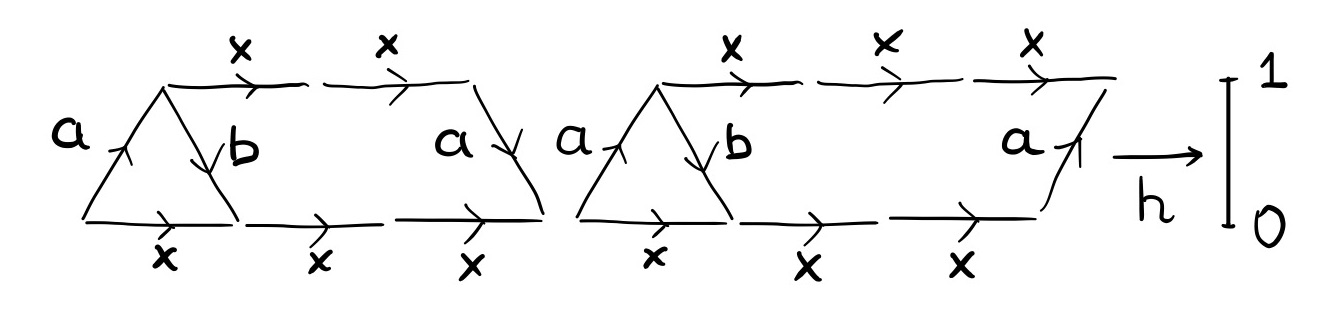}
	\caption{The $2$-cells of the presentation complex of the group presentation of 
	a $2$-generator Artin group $\Art_{5}$ (left) and $\Art_{6}$ (right).}\label{fig:presentation}
\end{figure}
Let $r_{2m}(a,b,x)$ denote the relation $x^m = bx^{m-1}a$ and 
let $r_{2m+1}(a,b,x)$ denote the relation $x^ma = bx^{m}$.
Let $X(a,b)$ be the $2$-complex corresponding to the above presentation. 
Denote by $C(a,b)$ the disjoint union of its two $2$-cells, 
and let $p:C(a,b)\to X(a,b)$ be the natural projection.
There is an embedding of $C(a,b)$ in the plane 
and a height map $h$ to the interval $[0,1]$ 
such that $h$ restricted to each edge $x$ is constant, 
as in Figure~\ref{fig:presentation}. 
We refer to these edges as \emph{horizontal}, and to the other edges as \emph{non-horizontal}.
Note that the map $h$ is not well-defined on $X(a,b)$.

%%%%%%%%%%%%%%%%%%%%%%%%%%
\subsection{Brady-McCammond presentation for a general Artin group}\label{sec:gen Artin}
%%%%%%%%%%%%%%%%%%%%%%%%%%

A \emph{partial orientation} on a simple graph $\Gamma$ is a choice of an endpoint $\iota(e)$ for some of the edges $e$ in $E(\Gamma)$. 
Visually we represent a partial orientation on a simple graph by arrows: an edge $e$ with a choice of vertex $\iota(e)$ is represented as an arrow starting at the vertex $\iota(e)$.
We say a cycle (resp. path) $\gamma$ in a simple graph $\Gamma$ with a partial orientation $\iota$ is \emph{directed}, if for every edge $e$ in the cycle $\iota(e)$ is defined, and $\iota(e) = \iota(e')$ only when $e=e'$. An \emph{orientation} on $\Gamma$ is a partial orientation where each edge is oriented.

Let $\Gamma$ be a simple graph with edges labelled by number $\geq 2$, with a partial orientation $\iota$ where $i\iota(e)$ is defined for an edge $e$ if and only if the label of $e$ is $\geq 3$.

%A \emph{orientation} on a simple graph $\Gamma$
%assigns to each edge $e\in E(\Gamma)$ a set $o(e)$ of one or two endpoints of $e$.  
%An edge with both endpoints assigned is called \emph{bioriented}.
%We say a cycle $\gamma$ in $\Gamma$ is \emph{directed},
%if for each vertex $v\in \gamma$, there is exactly one edge $e\in \gamma$ such that $v\in o(e)$.
%We say a path $\gamma$ in $\Gamma$ is \emph{directed},
%if for each vertex $v\in \gamma$, except the first one or the last one, 
%there is exactly one edge $e\in \gamma$ such that $v\in o(e)$.
%In particular in a directed cycle or path, no edge is bioriented.
%
%Let $\Gamma$ be a simple graph with edges labelled by number $\geq 2$ and with a fixed orientation $o$ such that edge $e$ is bioriented if and only if the label of $e$ equals $2$. 
%
Generalizing Section~\ref{sec:2gen} we consider the following presentation of $\Art_{\Gamma}$ with respect to the partial orientation $\iota$:
\[
\langle a\in V(\Gamma), x \in E(\Gamma)\mid x=ab,\,  r_{M_{ab}}(a,b,x) \text{ where }x =\{a,b\} \text{ and either } a=
\iota(x) \text{ or }M_{ab}=2 \rangle.
\]
The partial orientation of the edge $x = \{c,d\}$ determines whether the new generators $x$ equals $cd$ or $dc$.
If $M_{cd}=2$ we have $x=cd = dc$, which is why we do not need to specify the partial orientation.
In the case of a $3$-generators Artin group 
\[
\Art_{MNP} = \langle a,b,c\mid(a,b)_M = (b,a)_M, (b,c)_N= (c,b)_N , (c,a)_P = (a,c)_P\rangle.
\] 
with $M,N,P<\infty$, the cyclic orientation on the triangle $\Gamma$ (see Figure~\ref{fig:orientation on a triangle}) gives the presentation
\[\langle a,b,c,x,y,z\mid x=ab, y=bc, z=ca,\\r_{M}(a,b,x), r_{N}(b,c,y), r_{P}(c,a,z)\rangle.\]
\begin{figure}
\includegraphics[scale=0.2]{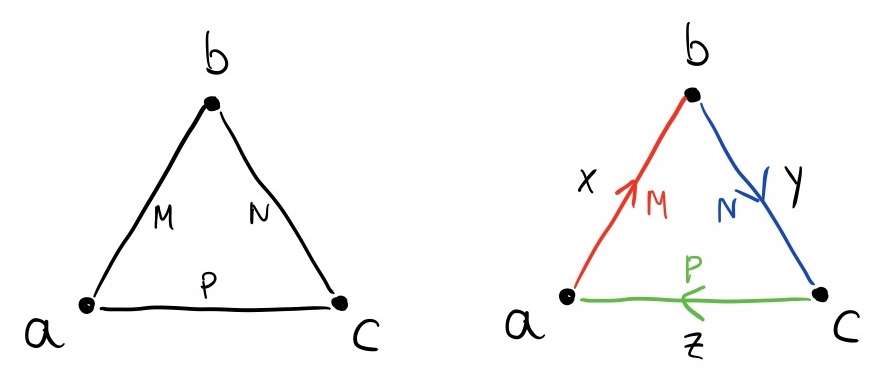}\caption{On the left: the defining graph $\Gamma$ of a triangle Artin group. On the right: the graph $\Gamma$ equipped with the cyclic orientation determining a Brady-McCammond presentation with three new generators $x,y,z$.}\label{fig:orientation on a triangle}
\end{figure}
Let $X_{\Gamma}$ be the complex obtained from the union $\bigcup_{(a,b)\in E(\Gamma)} X(a,b)$ 
by identifying the edges with the same labels. 
The fundamental group of $X_{\Gamma}$ is $\Art_{\Gamma}$. 
Brady-McCammond showed in ~\cite{BradyMcCammond2000} that when all labels are $\geq 3$,
then $X_{\Gamma}$ admits a locally CAT(0) metric provided that there exists an orientation such that
\begin{enumerate}
	\item every triangle in $\Gamma$ is directed,
	\item every $4$-cycle in $\Gamma$ contains a directed path of length at least $2$.
\end{enumerate}
Their proof in fact works in a greater generality. Using their methods one can show that for certain graphs with labels $2$ the complex $X_{\Gamma}$ admits a locally CAT(0) metric. We discuss the condition on $\Gamma$ in more detail in Section~\ref{sec:statement splitting}.

As in the $2$-generator case, let $C_{\Gamma}$ be the disjoint union of the $2$-cells of $X_{\Gamma}$. 
Again let $p:C_{\Gamma}\to X_{\Gamma}$ be the projection map.
We also define a height function $h:C_{\Gamma}\to [0,1]$ whose restriction to each $C(a,b)$ 
is the height function defined in Section~\ref{sec:2gen}.

%%%%%%%%%%%%%%%%%%%%%%%%%%%%%%%%%%%%%%%%%%%%%%%%%%%%
%%%%%%%%%%%%%%%%%%%%%%%%%%%%%%%%%%%%%%%%%%%%%%%%%%%%
\section{Splittings of Artin groups}\label{sec:Decomposition}
%%%%%%%%%%%%%%%%%%%%%%%%%%%%%%%%%%%%%%%%%%%%%%%%%%%%
%%%%%%%%%%%%%%%%%%%%%%%%%%%%%%%%%%%%%%%%%%%%%%%%%%%%

%%%%%%%%%%%%%%%%%%%%%%%%%%
\subsection{The statement of the Splitting Theorem}\label{sec:statement splitting}
%%%%%%%%%%%%%%%%%%%%%%%%%%

The main goal of Section~\ref{sec:Decomposition} is to prove Theorem~\ref{thm:decomposition}, which asserts that under certain assumption on $\Gamma$, $\Art_{\Gamma}$ splits as a free product with amalgamation $A*_CB$ or an HNN-extension $A*_B$ where $A,B,C$ are finite rank free groups.
We begin with a precise statement.

%Let $\Gamma$ be a labelled simple graph, with a partial orientation $\iota$ such that 
%an edge $e$ is oriented if and only if the label of $e$ is $\geq 3$,
%%an edge is bioriented if and only if its label is $2$,
% as in Section~\ref{sec:gen Artin}.
% 
 \begin{defn}\label{defn:misdirected} Let $\Gamma$ be a simple graph with a partial orientation $\iota$.
We say a path $\gamma$ of length $\geq 2$ in $\Gamma$ is a \emph{misdirected path} 
if the partial orientation on $\gamma$ induced by $\iota$ can be extended to an orientation such that a maximal directed subpath of $\gamma$ has length $1$.
We say an even length cycle $\gamma$ is a \emph{misdirected cycle} 
if the induced partial orientation on $\gamma$ extends to an orientation where maximal directed subpaths of $\gamma$ have length $1$.
We say a cycle $\gamma$ is an \emph{almost misdirected cycle} if $\gamma$ can be expressed as a cycle $(a_1, \dots, a_n, a_1)$ where the path $(a_1,\dots, a_n)$ is misdirected. 
\end{defn}
See Figure~\ref{fig:misdirected paths}(1) for examples of misdirected paths and a misdirected cycle.
%Note that an almost misdirected cycle might contain a directed subpath of length $2$ or $3$. 
See Figure~\ref{fig:misdirected paths}(2) for examples of almost misdirected cycles. Note that every even length misdirected cycle is almost misdirected, but not vice-versa.
\begin{figure}
\includegraphics[scale=0.3]{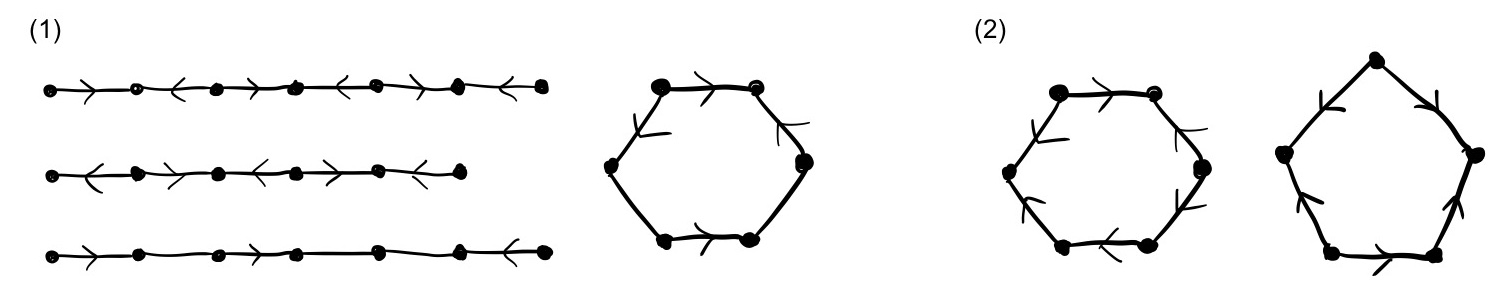}\caption{(1) Examples of misdirected paths and a misdirected cycle. (2) Almost misdirected cycles of even and odd length may contain directed subpaths of length $3$ and $2$ respectively.}\label{fig:misdirected paths}
\end{figure}
%\begin{figure}
%\includegraphics[scale=0.3]{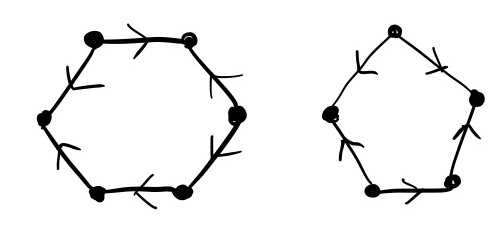}\caption{Fully oriented almost misdirected cycles of even and odd length contain directed subpaths of length $3$ and $2$ respectively.}\label{fig:almostmisdirectedcycles}
%\end{figure}

\begin{defn}\label{defn:admissible}Let $\Gamma$ be a simple graph. Assume edges of $\Gamma$ are labelled by an integer $\geq 2$. We say that  a partial orientation $\iota$ on $\Gamma$ is \emph{admissible} if 
\begin{itemize}
\item $\iota(e)$ for an edge $e$ is defined if and only if the label of $e$ is $\geq 3$, and
\item no cycle in $\Gamma$ is almost misdirected.
\end{itemize}

\end{defn}

\begin{thm}\label{thm:decomposition}
Suppose $\Gamma$ admits an admissible partial orientation. 
%Then $\Art_{\Gamma}$ splits as a free product with amalgamation or an HNN-extension of finite rank free groups. 
%The splittings is an HNN-extension if and only if $\Gamma$ is a bipartite graph with all labels even. 
If $\Gamma$ is a bipartite graph with all labels even, then $\Art_{\Gamma}$ splits as an HNN-extension $A*_B$, where $A,B$ are finite rank free groups.
Otherwise $\Art_{\Gamma}$ splits as a free product with amalgamation $A*_CB$ where $A,B,C$ are finite rank free groups. 
Moreover, $\rank A = |E(\Gamma)|$, $\rank B =1 -|V(\Gamma)| +2|E(\Gamma)|$, and $C$ is an index $2$ subgroup of $B$, so $\rank C = 1 -2|V(\Gamma)| +4|E(\Gamma)|$. 
\end{thm}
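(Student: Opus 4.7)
The plan is to realize $\Art_\Gamma$ as the fundamental group of the Brady--McCammond complex $X_\Gamma$ and then cut $X_\Gamma$ along a ``midline'' subgraph to read off the splitting by van Kampen. In each $2$-cell the two non-horizontal edges $a,b$ each appear once and interpolate between the cell's ``top'' (an $x$-edge) and its ``bottom''; subdividing every vertex-edge of $X_\Gamma$ at its midpoint and inserting one arc per $2$-cell at local height $1/2$ produces a midline graph $Z\subset X_\Gamma$ with $|V(\Gamma)|$ vertices and $2|E(\Gamma)|$ arcs. Admissibility is used straight away to verify that $Z$ is connected and that $Z\hookrightarrow X_\Gamma$ is geometrically well-behaved---an almost misdirected cycle would produce a short essential cycle in the link of the single vertex forcing unwanted identifications in $Z$ or in the subsequent collapsing argument.

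\textbf{HNN case ($\Gamma$ bipartite, all labels even).} The bipartition $V=V_1\sqcup V_2$ lets me define $\eta\colon\Art_\Gamma\to\mathbb{Z}$ by $\eta(a)=\pm 1$ according to the part of $a$ and $\eta(x)=0$. A direct check on the relators $x=ab$ and $r_{2m}(a,b,x)$ shows $\eta$ is well defined, and evenness of every label is precisely what makes this work (any odd-label relator would force $\eta(a)=\eta(b)$, incompatible with the bipartition). I pass to the $\mathbb{Z}$-cover $X_\Gamma^\eta$, which carries a global height function $\ell$, and take the fundamental domain $D=\ell^{-1}([0,1])$ whose boundary $\partial D$ is (two copies of) $Z$. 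Collapsing the short half-edges of $D$ to a single point turns each $2$-cell piece of $D$ into a disc whose boundary equates a midline arc with a power of an $x$-loop, so $D$ deformation retracts onto the wedge of the $|E|$ $x$-loops and $\pi_1 D=F_{|E|}$. The $\mathbb{Z}$-action then glues $\partial_- D$ to $\partial_+ D$, yielding $\Art_\Gamma = F_{|E|}*_{\pi_1 Z} = F_{|E|}*_{F_{1-|V|+2|E|}}$ as an HNN extension.

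\textbf{Amalgam case.} When $\Gamma$ is not bipartite-with-all-even, no such $\eta$ exists, but the parity homomorphism $\epsilon\colon \Art_\Gamma\to\mathbb{Z}/2$ (every vertex-generator $\mapsto 1$, every $x$-generator $\mapsto 0$) is well defined on every relator. Its index-$2$ kernel corresponds to a double cover $\tilde X_\Gamma$ with two vertices, in which the cut along the lifted midline $\tilde Z$ (with $2|V|$ vertices and $4|E|$ arcs) genuinely separates: the HNN-case collapsing argument, applied in $\tilde X_\Gamma$, shows $\ker\epsilon = F_{|E|}*_{F_{1-2|V|+4|E|}} F_{|E|}$. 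The deck involution $\tau$ swaps the two vertex orbits of the resulting Bass--Serre tree $T$, so $\Art_\Gamma$ acts on $T$ with a single vertex orbit; subdividing $T$ at the edges inverted by $\tau$ produces a new orbit of midpoints with stabilizer $B$ containing $C:=\pi_1\tilde Z$ as an index-$2$ subgroup. Once $B$ is shown to be free---the main technical point---Schreier's formula gives $\rank B = 1-|V|+2|E|$, and one reads off $\Art_\Gamma = F_{|E|}*_C B$.

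\textbf{Main obstacle.} The hard part is the amalgam-case verification of $B$: both the inversion claim, that $\tau$ really inverts some edge of $T$ rather than acting freely (which would give an HNN loop instead of a second vertex orbit), and the freeness of the resulting midpoint stabilizer, require a careful analysis of the $\tau$-action on $\tilde Z$ relative to the admissible partial orientation $\iota$. An almost misdirected cycle would either disconnect $\tilde Z$ or remove all $\tau$-inversions, collapsing the splitting back to a less-refined HNN form and breaking both the rank count and the stated case dichotomy.
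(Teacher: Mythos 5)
Your overall strategy---cut the Brady--McCammond complex along a midline to read off a graph-of-groups decomposition---is the same starting point as the paper, but you deviate at the key step and the route you take leaves the central technical point unaddressed.

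The paper decomposes $X_\Gamma$ directly as $N_0\cup N_{\sfrac12}$, where $N_0$ is a tubular neighborhood of the level-$0$ graph $X_0$ (a bouquet of $|E(\Gamma)|$ loops, so $A=\pi_1X_0\cong F_{|E|}$), $N_{\sfrac12}$ is a neighborhood of the level-$\sfrac12$ graph $X_{\sfrac12}$ (a copy of $\Gamma$ with each edge doubled, giving $B=\pi_1X_{\sfrac12}$ of the stated rank), and the intersection $N_{\sfrac14}$ retracts onto the level-$\{\sfrac14,\sfrac34\}$ graph $X_{\sfrac14}$, which is a canonical degree-$2$ cover of $X_{\sfrac12}$, giving $C=\pi_1X_{\sfrac14}$ with $[B:C]=2$ for free. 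All three are $\pi_1$ of finite graphs, so free; the injectivity of $C\to B$ is automatic because it is a covering map. The entire weight of the hypothesis on $\Gamma$ is borne by the injectivity of the other edge-inclusion $C\to A$, which is not automatic and which occupies Sections 5.6 and 5.7 (Propositions~\ref{prop:homotopic X14} and \ref{prop:locally injective}): $X_{\sfrac14}\to X_0$ is factored through a collapse $X_{\sfrac14}\to\overline X_{\sfrac14}$ followed by a combinatorial map $\overline X_{\sfrac14}\to X_0$, and one shows the collapse is a homotopy equivalence (resp.\ the combinatorial map is an immersion) precisely when $\Gamma$ has no misdirected even cycles and no all-$2$ cycles (resp.\ no almost misdirected cycles).

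This is exactly the step your proposal skips. Both in the HNN case (``the $\mathbb Z$-action then glues $\partial_-D$ to $\partial_+D$, yielding $\Art_\Gamma=F_{|E|}*_{\pi_1Z}$'') and in the amalgam case (``$\ker\epsilon=F_{|E|}*_{F_{1-2|V|+4|E|}}F_{|E|}$''), you silently assert that a Seifert--van Kampen pushout is an HNN extension or amalgamated product, which requires the edge-group maps into the vertex groups to be injective. Your only gesture toward this is the phrase that admissibility makes $Z\hookrightarrow X_\Gamma$ ``geometrically well-behaved'' and the vague link-cycle remark, which is not a proof. Concretely: after collapsing the half-edges of $D$, the induced map $\pi_1(\partial_\pm D)\to\pi_1 D$ can easily fail to be injective---a misdirected cycle in $\Gamma$ produces a nontrivial loop in the midline that becomes nullhomotopic in the collapsed $D$, and an almost misdirected cycle makes the resulting map to $X_0$ fail to be an immersion. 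Until you rule these out, you do not have a splitting.

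Your ``main obstacle'' paragraph also misidentifies where the difficulty lies. In the paper, the group $B$ is free for trivial reasons (it is $\pi_1$ of the graph $X_{\sfrac12}$), and its index-$2$ subgroup $C$ is free for the same reason, with no need to pass to a cover and reconstruct $B$ as a midpoint stabilizer of a $\tau$-action; the subdivision/$\tau$-inversion discussion in your amalgam case is machinery you invented to recover what the paper obtains directly. Worse, obtaining freeness of $B$ as an extension $1\to C\to B\to\mathbb Z/2\to 1$ is genuinely not automatic (such extensions can have $2$-torsion), so if you keep this route you would need an additional argument there that the paper never needs. In summary: the gap is the $\pi_1$-injectivity of $C\to A$, which is where admissibility of the partial orientation actually does its work, and your proposal neither proves it nor identifies it as the central issue.
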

We prove Theorem~\ref{thm:decomposition} in Section~\ref{sec:decomposition}.
The condition that $\Gamma$ has no almost misdirected cycles 
implies $\Art_{\Gamma}$ in Theorem~\ref{thm:decomposition} is $2$-dimensional (since no $3$-cycle can have an edge labelled by $2$).
Our condition also implies the other condition given by Brady-McCammond (and included in the end of Section~\ref{sec:gen Artin}) ensuring that $X_{\Gamma}$ is CAT(0). Therefore all Artin groups satisfying the assumptions of Theorem~\ref{thm:decomposition} are CAT(0) by \cite{BradyMcCammond2000}.
Since a $4$-clique does not admit an orientation where each $3$-cycle is directed, our condition also implies that the clique number of $\Gamma$ for is at most $3$.

Recall, $\Art(\Gamma)$ has \emph{large type}, if $M_{ab} \geq 3$ for all $\{a,b\}\in E(\Gamma)$.
Here are some examples of Artin groups that satisfy the assumptions of Theorem~\ref{thm:decomposition}:
\begin{itemize}
\item All large type $3$-generator Artin groups.
\item More generally, large type Artin group whose defining graph $\Gamma$ admits an orientation where each simple cycle is directed. 
This includes $\Gamma$ that is planar and each vertex have even valence (as observed in \cite{BradyMcCammond2000}).
\item Many other Artin groups with the sufficiently small ratio $\frac{\#\text{ labels }2\text{ in }\gamma}{\text{length}(\gamma)}$ in every cycle $\gamma$. In particular, this includes Artin groups with $\Gamma$ where all edges labelled by $2$ disconnect the graph and all subgraphs without edges labelled by $2$ are as above.
\end{itemize}

For the rest of this section, we assume that $\Gamma$ is a fixed connected, labelled, simple graph.
We write $X$ for the Brady-McCammond complex $X_{\Gamma}$ defined in Section~\ref{sec:gen Artin}. 
The splitting of $\Art_{\Gamma}$ comes from a decomposition of the $2$-complex $X$ 
into a union of two subspaces where each subspace 
and the intersection of them all have homotopy type of graphs. 
We will now describe these subspaces.

%%%%%%%%%%%%%%%%%%%%%%%%%%
\subsection{Horizontal graphs in $X$}\label{sec:horizontal}
%%%%%%%%%%%%%%%%%%%%%%%%%%

We distinguish the following subspaces of $X$ that are the images under $p$ of level sets of the height function $h$, as defined in Section~\ref{sec:gen Artin}.
\begin{figure}
\includegraphics[scale=0.4]{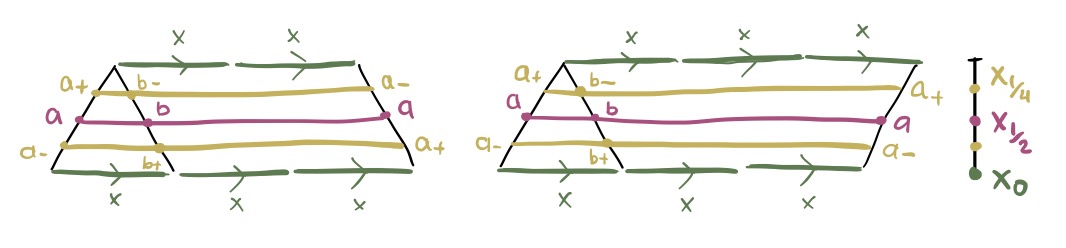}\caption{Horizontal graphs $X_0$, $X_{\sfrac 12}$ and $X_{\sfrac 14}$.}\label{fig:horizontal}
\end{figure}
\begin{itemize}
	\item[($0$)] The level set $p(h^{-1}(0))$ is denoted by $X_0$.
		The intersection of $X_0$ with every $X(a,b)$ is a single loop labelled by the generator $x=ab$.
		Thus $X_0$ is a bouquet of loops, one for each edge in $E(\Gamma)$. See Figure~\ref{fig:horizontal}.
	\item[($\sfrac 12$)] The level set $p(h^{-1}(\frac 12))$ is denoted by $X_{\sfrac{1}{2}}$. 
		We call the points of intersection of $X_{\sfrac{1}{2}}$ 
		with the non-horizontal edges the \emph{midpoints}. 
		We will abuse the notation, and $a$ will denote the midpoint of the edge labelled by $a$. 
		The intersection of $X_{\sfrac 12}$ with every $X(a,b)$ is a single cycle of length $2$ with vertices $a,b$. 
		The graph $X_{\sfrac 12}$ is a union of all these cycles of length two identified along vertices with the same label. 
		Hence $X_{\sfrac 12}$ is a copy of the graph $\Gamma$ with every edge doubled. See Figure~\ref{fig:horizontal}.
	\item[($\sfrac 14$)] The union of the level set $p(h^{-1}(\frac 1 4)\cup h^{-1}(\frac 3 4))$ 
		is denoted by $X_{\sfrac{1}{4}}$. 
		We call the points of intersection of $X_{\sfrac{1}{4}}$ 
		with the non-horizontal edges the \emph{quarterpoints}, 
		and denote them by $a_+, a_-, b_+, b_-$ where the vertices $a_-,a, a_+$ are ordered 
		with respect with the orientation of the edge $a$. 
		Similarly, $b_-,b, b_+$ are ordered with respect with the orientation of the edge $b$.
		See Figure~\ref{fig:horizontal}.
		
		If $M_{ab}$ is odd, the intersection of $X_{\sfrac{1}{4}}$ with $X(a,b)$ 
		is a single cycle of length $4$. 
		If $M_{ab}$ is even, the intersection of $X_{\sfrac{1}{4}}$ with $X(a,b)$ 
		is a disjoint union of two cycles, each of length $2$. 
		We describe $X_{\sfrac 14}$ in more detail in Section~\ref{sec:X14}.
\end{itemize}

Let us emphasize that $X_{\sfrac{1}{2}}$ is never a simple graph;
it always has double edges. Similarly $X_{\sfrac{1}{4}}$ does not need to be simple. \\

%%%%%%%%%%%%%%%%%%%%%%%%%%
\subsection{Horizontal tubular neighborhoods in $X$}\label{sec:horizontal tubular}
%%%%%%%%%%%%%%%%%%%%%%%%%%
Fix $0<\epsilon<1/4$. We now define tubular neighborhoods $N_0, N_{\sfrac 12}, N_{\sfrac 14}\subseteq X$ of graphs $X_0, X_{\sfrac 12}, X_{\sfrac 14}$.
\begin{itemize}
	\item[($0$)] Let $N_0$ be an open neighborhood of $X_0$ 
	of the form $p\left(h^{-1}([0,1/2-\epsilon)\cup(1/2+\epsilon,1])\right)$. 
	Note that $N_0$ deformation retracts onto $X_0$ with the property that 
	the intersection of $N_0$ with the $1$-skeleton of $X$ 
	is contained in the $1$-skeleton of $X$ at all times.
	
	\item[($\sfrac 12$)] Similarly, let $N_{\sfrac{1}{2}}$ be an open neighborhood of $X_{\sfrac{1}{2}}$ 
	of the form $p(h^{-1}((\epsilon, 1-\epsilon)))$. 
	Again, $N_{\sfrac{1}{2}}$ deformation retracts onto $X_{\sfrac{1}{2}}$ 
	such that $N_{\sfrac{1}{2}}\cap X^{(1)}$ is contained in $X^{(1)}$ at all times. 
	
	\item[($\sfrac 14$)] The intersection $N_0\cap N_{\sfrac{1}{2}}$, 
	which we denote by $N_{\sfrac14}$, restricted to $X(a,b)$
	 is equal to $p\left(h^{-1}((\epsilon, 1/2-\epsilon)\cup (1/2+\epsilon, 1-\epsilon))\right)$. 
	 Consequently, $N_{\sfrac14}$ deformation retracts onto $X_{\sfrac{1}{4}}$ such that
	 $N_{\sfrac 14}\cap X^{(1)}$ is contained in $X^{(1)}$ at all times. 
\end{itemize}
We also have $N_0\cup N_{\sfrac{1}{2}} = X$ 
because $[0,1/2-\epsilon)\cup(1/2+\epsilon,1]\cup(\epsilon, 1-\epsilon) = [0,1]$.

%%%%%%%%%%%%%%%%%%%%%%%%%%
\subsection{Splitting}
%%%%%%%%%%%%%%%%%%%%%%%%%%
Let $A = \pi_1 X_0 = \pi_1 N_0$, $B=\pi_1 X_{\sfrac{1}{2}} = \pi_1 N_{\sfrac{1}{2}}$ 
and if $X_{\sfrac 14}$ is connected, let 
$C =\pi_1 X_{\sfrac{1}{4}} = \pi_1 N_{\sfrac{1}{4}}$.
The group $A,B,C$ are all the fundamental groups of finite graphs, 
so they are finite rank free groups. 
The composition $X_{\sfrac{1}{4}}\hookrightarrow N_{0} \to X_0$ of the inclusion $X_{\sfrac{1}{4}}\hookrightarrow N_{0}$ with the retraction $ N_{0} \to X_0$ induces a group homomorphism $C\to A$. 
Similarly, the composition $X_{\sfrac{1}{4}}\hookrightarrow N_{\sfrac{1}{2}} \to X_{\sfrac{1}{2}}$ induces a group homomorphism $C\to B$.

When $X_{\sfrac 1 4}$ is connected, then so is $N_{\sfrac 1 4}$.
Since $N_0\cup N_{\sfrac{1}{2}} = X$ and $N_{\sfrac 1 4} =N_0\cap N_{\sfrac{1}{2}}$, by the Seifert-Van Kampen theorem we get the following.
\begin{lem}\label{lem:pushout}
If $X_{\sfrac 1 4}$ is connected and maps $C\to A$ and $C\to B$ are injective, then $\Art_\Gamma =  A*_C B$. 
\end{lem}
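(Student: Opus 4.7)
The plan is to apply the Seifert–van Kampen theorem directly to the open cover $\{N_0, N_{\sfrac 12}\}$ of $X$. The construction in Section~\ref{sec:horizontal tubular} was set up precisely to enable this: $N_0$ and $N_{\sfrac 12}$ are open in $X$, their union equals $X$, and their intersection is $N_{\sfrac 14}$, which deformation retracts onto $X_{\sfrac 14}$ while keeping the $1$-skeleton intersection inside $X^{(1)}$ at all times. The last property in particular guarantees that the inclusion $X_{\sfrac 14}\hookrightarrow N_{\sfrac 14}$ is a homotopy equivalence (and similarly for $X_0\hookrightarrow N_0$ and $X_{\sfrac 12}\hookrightarrow N_{\sfrac 12}$), so the three fundamental groups $A,B,C$ defined via the graphs agree with the fundamental groups of the open neighborhoods.

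First I would check path-connectedness of the three open sets. The space $N_0$ deformation retracts to $X_0$, which is a bouquet of loops and hence connected. The space $N_{\sfrac 12}$ deformation retracts to $X_{\sfrac 12}$, which is a copy of $\Gamma$ with doubled edges, so it is connected because $\Gamma$ is assumed connected. By hypothesis $X_{\sfrac 14}$ is connected, so $N_{\sfrac 14}=N_0\cap N_{\sfrac 12}$ is connected as well.

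Next I would invoke the Seifert–van Kampen theorem for the cover $X=N_0\cup N_{\sfrac 12}$ with connected intersection $N_{\sfrac 14}$. This yields that $\pi_1 X$ is the pushout of
\[
A \;\xleftarrow{\,\iota_A\,}\; C \;\xrightarrow{\,\iota_B\,}\; B,
\]
where $\iota_A,\iota_B$ are the maps induced by the compositions $X_{\sfrac 14}\hookrightarrow N_0\to X_0$ and $X_{\sfrac 14}\hookrightarrow N_{\sfrac 12}\to X_{\sfrac 12}$ described just before the lemma. Since $A,B,C$ are finite rank free groups and $\iota_A,\iota_B$ are injective by hypothesis, this pushout is precisely the amalgamated free product $A*_C B$ in the usual sense. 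Combined with the identification $\pi_1 X_{\Gamma}=\Art_{\Gamma}$ from Section~\ref{sec:gen Artin}, this gives $\Art_{\Gamma}=A*_C B$.

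There is essentially no obstacle: the work has been done in setting up the tubular neighborhoods so that they form a good Seifert–van Kampen cover, and the injectivity hypothesis is exactly what is needed to interpret the resulting pushout as an amalgamated product. The nontrivial verifications — that $X_{\sfrac 14}$ is connected and that $\iota_A,\iota_B$ are injective — are deferred to later portions of Section~\ref{sec:Decomposition}, where the combinatorics of the admissible partial orientation will be used; these will be the real work of the decomposition theorem.
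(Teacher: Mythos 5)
Your proposal is correct and takes essentially the same approach as the paper: the paper derives this lemma directly from the Seifert--van Kampen theorem applied to the cover $X = N_0 \cup N_{\sfrac 12}$ with connected intersection $N_{\sfrac 14}$, exactly as you describe, and the injectivity hypothesis upgrades the pushout to an amalgamated free product.
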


Analogously, we have the following.
\begin{lem}\label{lem:hnn} 
Suppose $X_{\sfrac 1 4}$ has two connected components and
$X_{\sfrac 14} \to X_{\sfrac 12}$ restricted to each connected component is a combinatorial bijection. 
If $X_{\sfrac 14} \to X_0$ restricted to each connected component is $\pi_1$-injective, 
then $\Art_\Gamma = A*_B$, where the two copies of $B$ in $A$ are induced by the two restrictions of $X_{\sfrac 14} \to X_0$ to a connected component.
\end{lem}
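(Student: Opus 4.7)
The plan is to apply the Seifert--van Kampen theorem to the open cover $X = N_0 \cup N_{\sfrac{1}{2}}$, as in Lemma~\ref{lem:pushout}, but accounting for the fact that the intersection $N_{\sfrac{1}{4}} = N_0 \cap N_{\sfrac{1}{2}}$ now has two components; this should promote the amalgamated product from Lemma~\ref{lem:pushout} to an HNN extension.

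First I would denote the two components of $N_{\sfrac{1}{4}}$ by $N'$ and $N''$, and the corresponding components of $X_{\sfrac{1}{4}}$ by $X'$ and $X''$. This presents $X$ as a graph of spaces over the bigon graph (two vertices joined by two edges), with vertex spaces $N_0, N_{\sfrac{1}{2}}$ and edge spaces $N', N''$, each including into both vertex spaces. By the graph-of-spaces version of van Kampen (equivalently, by Bass--Serre theory for the associated graph of groups), choosing the edge corresponding to $N'$ as a maximal subtree yields
\[
\pi_1 X \;=\; \bigl( \pi_1 N_0 *_{\pi_1 N'} \pi_1 N_{\sfrac{1}{2}} \bigr) *_{\pi_1 N''},
\]
where the stable letter $t$ is associated to the remaining edge.

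Next I would simplify using the hypotheses. Because the deformation retractions from Section~\ref{sec:horizontal tubular} together with the combinatorial bijections $X' \to X_{\sfrac{1}{2}}$ and $X'' \to X_{\sfrac{1}{2}}$ make each inclusion $N' \hookrightarrow N_{\sfrac{1}{2}}$ and $N'' \hookrightarrow N_{\sfrac{1}{2}}$ a homotopy equivalence, the induced maps $\pi_1 N' \to B$ and $\pi_1 N'' \to B$ are isomorphisms. The $\pi_1$-injectivity hypothesis ensures that $\pi_1 N' \to A$ and $\pi_1 N'' \to A$ are injective; let $\beta_i : B \to A$ for $i=1,2$ denote the composition of the inverse isomorphism $B \to \pi_1 N^{(i)}$ with the injection into $A$. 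The amalgamated factor $\pi_1 N_0 *_{\pi_1 N'} \pi_1 N_{\sfrac{1}{2}}$ collapses to $A$, since $B \cong \pi_1 N'$ is absorbed into $A$ via $\beta_1$, and the remaining HNN relation becomes $t^{-1}\beta_2(b) t = \beta_1(b)$ for $b \in B$. This is exactly $A *_B$, with the two copies of $B$ in $A$ induced by the two restrictions of $X_{\sfrac{1}{4}} \to X_0$, as claimed.

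The main technical point will be the book-keeping in the van Kampen/Bass--Serre computation: one must set basepoints carefully and check that the spanning-tree choice produces the correct pair of embeddings $\beta_1,\beta_2 : B \hookrightarrow A$. The underlying geometric content---namely that each sheet of $N_{\sfrac{1}{4}}$ covers $N_{\sfrac{1}{2}}$ isomorphically and injects into $N_0$---is precisely what the two hypotheses of the lemma supply, so once the graph-of-spaces framework is set up the conclusion is essentially automatic.
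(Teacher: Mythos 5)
Your proof is correct and takes essentially the same approach as the paper: the paper directly exhibits $X$ as a graph of spaces over a graph with one vertex and one loop (vertex space $X_0$, edge space $X_{\sfrac 12}$ with the two attaching maps coming from the two components of $X_{\sfrac 14}$), whereas you start from the more obvious bigon graph with two vertex spaces $N_0, N_{\sfrac 12}$ and then collapse one edge using the observation that $\pi_1 N' \to B$ and $\pi_1 N'' \to B$ are isomorphisms. The two descriptions are the same graph-of-spaces decomposition read slightly differently, and the Bass--Serre bookkeeping you carry out to land on $A *_B$ with the two embeddings $\beta_1, \beta_2$ is the content the paper leaves implicit.
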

\begin{proof} Since $X_{\sfrac 1 4}$ has two connected components, 
$X$ is a graph of spaces with one vertex and one loop, 
where the vertex space is $X_0$ and the edge space is $X_{\sfrac 12}$ 
with two maps to $X_0$ coming from the two restrictions of $X_{\sfrac 14} \to X_0$ to a connected component. 
Since $X_{\sfrac 14} \to X_0$ restricted to each connected component is $\pi_1$-injective, 
we get the claimed HNN-extension.
\end{proof}

%%%%%%%%%%%%%%%%%%%%%%%%%%
\subsection{The graph $X_{\sfrac{1}{4}}$}\label{sec:X14}
%%%%%%%%%%%%%%%%%%%%%%%%%%
Let us first analyze $X(a,b)_{\sfrac{1}{4}}:=X_{\sfrac{1}{4}}\cap X(a,b)$. 
It has four vertices labelled by $a_+, a_-, b_+, b_-$, and four edges.
If $M_{ab}$ is even, then $X(a,b)_{\sfrac{1}{4}}$ has two edges between $a_+, b_-$ and two edges between $a_-, b_+$. 
If $M_{ab}$ is odd, then $X(a,b)_{\sfrac{1}{4}}$ is a $4$-cycle on vertices $a_+, b_-, a_-, b_+$. 
We will think of the set of edges of $X(a,b)_{\sfrac{1}{4}}$ as a disjoint union $E(a,b)'\sqcup E(a,b)''$ 
where \begin{itemize}
\item The set $E(a,b)'$ is equal $\left\{\{a_+, b_-\}, \{a_-, b_+\}\right\}$. 
Those edges correspond to the segments contained in the $2$-cell with the boundary $abx^{-1}$ in the presentation complex 
(see Figure~\ref{fig:horizontal}).
\item The set $E(a,b)''$ is equal $\left\{\{a_+, b_-\}, \{a_-, b_+\}\right\}$ or $\left\{\{a_+, b_+\}, \{a_-, b_-\}\right\}$, depending on the parity of $M_{ab}$. 
Those edges correspond to the segments contained in the $2$-cell $r_M(a,b,x)$ in the presentation complex 
(see Figure~\ref{fig:horizontal}).
\end{itemize}
This gives us the following description of $X_{\sfrac{1}{4}}$ for general $\Art_{\Gamma}$. 

\begin{desc}\label{lem:structure of X14}
The inclusion $X_{\sfrac{1}{4}}\hookrightarrow N_{\sfrac{1}{2}}$ composed 
with the deformation retraction $N_{\sfrac{1}{2}}\to X_{\sfrac{1}{2}}$ is a covering map $X_{\sfrac{1}{4}}\to X_{\sfrac{1}{2}}$ of degree $2$. Consequently, the graph $X_{\sfrac{1}{4}}$ is a double cover of the graph $X_{\sfrac{1}{2}}$ and can be described in terms of $\Gamma$ as follows:
\begin{itemize}
\item The vertex set $V(X_{\sfrac{1}{4}})$ is the disjoint union $V_+\sqcup V_-$ 
where each $V_+, V_-$ is in 1-to-1 correspondence with $V(\Gamma)$. 
For each $a\in V(\Gamma)$ we denote the corresponding vertices by $a_+, a_-$ respectively.
\item The set of edges $E(X_{\sfrac{1}{4}})$ is the disjoint union $E'\sqcup E''$ 
such that each of the graphs $\Gamma' = (V_+\sqcup V_-, E')$ 
and $\Gamma'' = (V_+\sqcup V_-, E'')$ is a double cover of $\Gamma$ 
with $a_{\pm}\mapsto a$ for every $a\in V(\Gamma)$.
In particular, $\Gamma'$ and $\Gamma''$ are simple graphs.
\item For each $\{a,b\}\in E(\Gamma)$, $E'$ contains an edge $\{a_+, b_-\}$ and $\{a_-, b_+\}$. In particular, $\Gamma'$ is a bipartite double cover of $\Gamma$.
\item For each $\{a,b\}\in E(\Gamma)$ where $M_{ab}$ is even, there is an edge $\{a_+,b_-\}$ and an edge $\{a_-,b_+\}$ in $E''$. In particular, when $M_{ab}$ is even then $E(X_{\sfrac 1 4})$ contains two copies of the edge $\{a_+, b_-\}$ and two copies of the edge $\{a_-, b_+\}$.
\item For each $\{a,b\}\in E(\Gamma)$ where $M_{ab}$ is odd, there is an edge $\{a_+, b_+\}$ and an edge $\{a_-, b_-\}$ in $E''$. 
\end{itemize}
In particular, every path $\gamma = (a_1, a_2,\dots, a_n)$ in $\Gamma$ has two lifts in $\Gamma'$: 
\begin{itemize}
\item $(a_{1+}, a_{2-},\dots, a_{n+})$ and $(a_{1-}, a_{2+},\dots, a_{n-})$, if $\gamma$ has even length, i.e.\ $n$ is odd,
\item $(a_{1+}, a_{2-}\dots, a_{n-})$ and $(a_{1-}, a_{2+},\dots, a_{n+})$, if $\gamma$ has odd length, i.e.\ $n$ is even.
\end{itemize}

Every $n$-cycle $(a_1,a_2,\dots, a_n, a_1)$ in $\Gamma$ has:
\begin{itemize} 
\item one lift $(a_{1+}, a_{2-}, \dots a_{n+}, a_{1-}, a_{2+}, \dots, a_{n-}, a_{1+})$ in $\Gamma'$ of length $2n$, if $n$ is odd,
\item two lifts $(a_{1+}, a_{2-}, \dots a_{n-}, a_{1+})$ and $(a_{1-}, a_{2+}, \dots, a_{n+}, a_{1-})$  in $\Gamma'$, each of length $n$, if $n$ is even.
\end{itemize}

\end{desc}

See Figure~\ref{fig:X1/4} for $X_{\sfrac{1}{4}}$ of a $3$-generators Artin group $\Art_{MNP}$.
\begin{figure}
\includegraphics[scale=0.3]{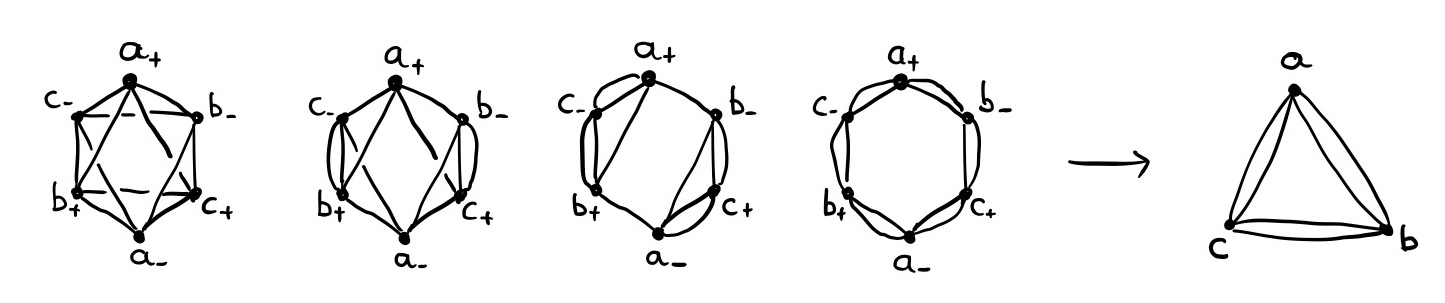}\caption{The graph $X_{\sfrac{1}{4}}$ if (1) all $M,N,P$ are odd, (2) only $N$ is even, (3) only $M$ is odd, (4) all $M,N,P$ are even. In all cases, $X_{\sfrac{1}{4}}\to X_{\sfrac{1}{2}}$ is a double covering map.
}\label{fig:X1/4}
\end{figure}
As discussed above, if $M_{ab}$ is even, then $X(a,b)_{\sfrac{1}{4}}$ has two connected components. The following lemma characterizes the graphs $\Gamma$ for which $X_{\sfrac 14}$ is connected.
\begin{lem}\label{lem:connected components of X14}
 The graph $X_{\sfrac 14}$ has either one or two connected components.  The following are equivalent:
\begin{itemize}
\item $X_{\sfrac 14}$ has two connected components,
\item each connected component of $X_{\sfrac 14}$ is a copy of $X_{\sfrac 12}$,
\item $\Gamma$ is a bipartite graph with all labels even.
\end{itemize}
\end{lem}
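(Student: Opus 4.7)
The plan is to build on Description~\ref{lem:structure of X14}, which already presents the natural map $X_{\sfrac 14}\to X_{\sfrac 12}$ as a degree-$2$ covering. Since $\Gamma$ is connected by the standing assumption and $X_{\sfrac 12}$ is obtained from $\Gamma$ by doubling each edge, $X_{\sfrac 12}$ is also connected. A degree-$2$ cover of a connected graph has either one or two components, and in the disconnected case each component maps homeomorphically onto the base. This immediately yields the dichotomy on the number of components and gives the equivalence of the first two bullet points.

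For the equivalence of the first and third conditions I would classify the cover by its monodromy homomorphism $\mu:\pi_1(X_{\sfrac 12})\to\Z/2$, defined by assigning to each edge $e$ of $X_{\sfrac 12}$ a label $\sigma_e\in\Z/2$ that records whether the two lifts of $e$ in $X_{\sfrac 14}$ interchange the sheets $V_+,V_-$ or not. Reading this off Description~\ref{lem:structure of X14}, for each $\{a,b\}\in E(\Gamma)$ the two parallel edges $e',e''$ in $X_{\sfrac 12}$ satisfy $\sigma_{e'}=1$ always, while $\sigma_{e''}=1$ precisely when $M_{ab}$ is even. The cover is trivial, i.e.\ disconnected, if and only if $\mu\equiv 0$, equivalently $\sum_{e\in\gamma}\sigma_e\equiv 0\pmod 2$ for every cycle $\gamma$ in $X_{\sfrac 12}$.

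Applying this criterion first to the length-two cycle $e'\cdot e''$ gives $1+\sigma_{e''}\equiv 0$, which forces $\sigma_{e''}=1$; hence triviality requires every label $M_{ab}$ to be even. Granted this, every edge of $X_{\sfrac 12}$ has $\sigma_e=1$, so $\mu$ just returns the length of a cycle modulo $2$, and triviality of the cover becomes bipartiteness of $X_{\sfrac 12}$. Finally, every closed walk in $\Gamma$ lifts to a closed walk of the same length in $X_{\sfrac 12}$ (by choosing either parallel copy of each edge), and every closed walk in $X_{\sfrac 12}$ projects to a walk of equal length in $\Gamma$; hence $X_{\sfrac 12}$ is bipartite iff $\Gamma$ is bipartite, closing the equivalence.

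The main obstacle is essentially bookkeeping: tracking which parallel copy of an edge in $X_{\sfrac 12}$ comes from $E'$ versus $E''$ in Description~\ref{lem:structure of X14}, and verifying that the monodromy computed combinatorially really agrees with the deck-transformation action $V_+\leftrightarrow V_-$. No deeper difficulty is anticipated.
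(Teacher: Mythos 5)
Your proof is correct and follows essentially the same route as the paper: both start from the fact (from Description~\ref{lem:structure of X14}) that $X_{\sfrac 14}\to X_{\sfrac 12}$ is a degree-$2$ cover of a connected graph, and both reduce the connectedness question to whether one can pass between the $V_+$ and $V_-$ sheets along edges. Your monodromy formulation $\mu:\pi_1(X_{\sfrac 12})\to\Z/2$ is a cleaner packaging of the paper's more hands-on argument (the paper just exhibits a path joining $a_+$ to $a_-$ from an odd label or an odd cycle, and exhibits the two components $U_+\sqcup W_-$, $U_-\sqcup W_+$ in the bipartite-even case), but the underlying content is the same.
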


\begin{proof}
By Description~\ref{lem:structure of X14}, $X_{\sfrac 14}$ is a double cover of $X_{\sfrac 12}$. 
Since $X_{\sfrac 12}$ is connected, 
$X_{\sfrac 14}$ can have at most two connected components. 
The equivalence of the first two conditions
follows directly from that fact. 
Let us prove that the third condition is equivalent to the first one.

Suppose $\Gamma$ is a bipartite graph with all labels even. 
Since all labels are even, 
$X_{\sfrac 14}$ is isomorphic to the graph $\Gamma'$ 
with all edges doubled, 
so it suffices to show that $\Gamma'$ is not connected.
Let $U\sqcup W$ be the two parts of $V(\Gamma)$, 
i.e.\ $U\sqcup W = V(\Gamma)$, and each edge of $\Gamma$ joins a vertex of $U$ with a vertex of $W$. 
Denote by $U_{\pm}, V_{\pm}$ the preimage in $V_{\pm}$ of $U,W$ respectively.
Then $U_+\sqcup W_-$ and $U_-\sqcup W_+$ are the vertex sets of the two connected components of $X_{\sfrac14}$.
Indeed, this is true by the description of paths in $\Gamma'$ in Description~\ref{lem:structure of X14}.

Now suppose that $\Gamma$ is not a bipartite graph with all labels even. 
That means either $\Gamma$ has an edge with an odd label, or there is an odd length cycle in $\Gamma$.
We show that in both cases there is a path joining vertices $a_+, a_-$ in $X_{\sfrac 14}$ 
for some $a\in V(\Gamma)$ (equivalently any, again by Description~\ref{lem:structure of X14}).
If $\{a,b\}\in E(\Gamma)$ with $M_{ab}$ odd, 
then there is a path with vertices $a_+, b_-, a_-$ in $X_{\sfrac 14}$.
If $(a_1,\dots, a_{2n+1})$ is an odd length cycle in $\Gamma$, 
then by Description~\ref{lem:structure of X14} its lift to $\Gamma'$ contains a path joining $a_{1+}, a_{1-}$ as a subpath.
\end{proof}

%%%%%%%%%%%%%%%%%%%%%%%%%%
\subsection{The map $X_{\sfrac{1}{4}}\to X_{\sfrac{1}{2}}$}\label{sec:CtoB}
%%%%%%%%%%%%%%%%%%%%%%%%%%

As in Description~\ref{lem:structure of X14}, the map $X_{\sfrac{1}{4}}\to X_{\sfrac{1}{2}}$ factors as the composition of the inclusion $X_{\sfrac{1}{4}}\hookrightarrow N_{\sfrac{1}{2}}$ 
with the deformation retraction $N_{\sfrac{1}{2}}\to X_{\sfrac{1}{2}}$.
Under that map, $a_{\pm}\in V(X_{\sfrac 14})$ is mapped to $a\in V(X_{\sfrac{1}{2}})$ and every edge $\{a_{\pm}, b_{\pm}\}\in E(X_{\sfrac{1}{4}})$ is mapped to $\{a,b\}\in E(X_{\sfrac{1}{2}})$. 

If $\Gamma$ is a bipartite graph with all label even, 
then by Lemma~\ref{lem:connected components of X14}, $X_{\sfrac 14}$ is a disjoint union of two copies of $X_{\sfrac 12}$ 
and the map $X_{\sfrac{1}{4}}\to X_{\sfrac{1}{2}}$ is the identity map 
while restricted to each of the connected components. 
Otherwise, by Lemma~\ref{lem:connected components of X14}, $X_{\sfrac 14}$ is a connected double cover of $X_{\sfrac 12}$.
Then $C=\pi_1X_{\sfrac 14}\to B= \pi_1X_{\sfrac 12}$ is an inclusion of an index $2$ subgroup. 
The quotient $B/C = \Z/2\Z$ can be identified with the automorphism group of the covering space $X_{\sfrac{1}{4}}$ over $X_{\sfrac 1 2}$. 
In the case of $3$-generator Artin group this automorphism can be viewed as a $\pi$-rotation of the graph $X_{\sfrac{1}{4}}$ (with respect to the planar representation as in Figure~\ref{fig:X1/4}).

%%%%%%%%%%%%%%%%%%%%%%%%%%
\subsection{The map $X_{\sfrac{1}{4}} \to X_0$}\label{sec:CtoA}
%%%%%%%%%%%%%%%%%%%%%%%%%%
In this section we analyze the map $X_{\sfrac{1}{4}} \to X_0$ 
which is obtained by composing the inclusion $X_{\sfrac{1}{4}} \hookrightarrow N_0$
with the deformation retraction $N_0\to X_0$. 
This map is never combinatorial, and it might identify two distinct edges with the same origin.
This map, unlike $X_{\sfrac{1}{4}} \to X_{\sfrac 12}$, depends on the partial orientation $\iota$ of $\Gamma$.
In this section we give a description of the map  $X_{\sfrac{1}{4}} \to X_{0}$, and in the next section, we characterize when this map is $\pi_1$-injective,  in terms of the combinatorics of $\Gamma$ and $\iota$. 

In order to understand the map $X_{\sfrac{1}{4}} \to X_0$ 
we express it as a composition $X_{\sfrac{1}{4}}\to \overline X_{\sfrac{1}{4}}  \to X_0$ 
where the first map collapses some edges to a point and subdivides some other edges,
 and the second one is a combinatorial map.  
Proposition~\ref{prop:homotopic X14} gives conditions on $\Gamma$ for $X_{\sfrac{1}{4}} \to \overline X_{\sfrac{1}{4}}$ to be a homotopy equivalence, 
and so to be $\pi_1$-injective.
Proposition~\ref{prop:locally injective} gives conditions for $\overline X_{\sfrac{1}{4}}\to X_0$ to be a combinatorial immersion and consequently, $\pi_1$-injective.

The graph  $\overline X_{\sfrac{1}{4}}$ is obtained from $X_{\sfrac{1}{4}}$ in two steps: 
\begin{enumerate}
\item An edge that is sent to a vertex in $X_0$ is collapsed to a vertex, which results in identification of its endpoints. 
The edges that get collapsed are certain edges of $E'$. 
\item An edge that is sent to a single edge of $X_0$ via a degree $m$ map, is subdivided into a path of length $m$. The edges that get subdivided are certain edges of $E''$. 
 \end{enumerate}
We know from Section~\ref{sec:X14}
that $X(a,b)_{\sfrac{1}{4}}$ is a $4$-cycle $(a_-,b_+, a_+, b_-, a_-)$ if $M_{ab}$ is odd, 
and a disjoint union of $2$-cycles $(a_-, b_+, a_-)$ and $(a_+, b_-, a_+)$ if $M_{ab}$ is even. 
In both cases $X(a,b)_{\sfrac{1}{4}}$ is mapped to a single loop in $X_0(a,b)$. 
In all cases the map $X(a,b)_{\sfrac{1}{4}} \to X(a,b)_0\simeq S^1$ is a degree $M_{ab}$ map. 
If $M_{ab} = 2m$, then the map has degree $m$ restricted to each of the connected components. 
Since an incoming edge $a$ and an outgoing edge $b$ are adjacent in $X(a,b)$, the edge $\{a_+, b_-\}$ of the graph $X(a,b)_{\sfrac{1}{4}}$ is collapsed to a point (see Figure~\ref{fig:horizontal}).
Similarly, looking at the degrees of the map $X(a,b)_{\sfrac{1}{4}} \to X(a,b)_0$ restricted to other edges we find how to subdivide these edges to ensure that the map becomes combinatorial.
Let $\overline X(a,b)_{\sfrac{1}{4}}$ be the image of $X(a,b)_{\sfrac{1}{4}}$ in $\overline X_{\sfrac{1}{4}}$.

\begin{desc}\label{lem:collapsing an edge} 
The graph $\overline X(a,b)_{\sfrac{1}{4}}$ is obtained from $X(a,b)_{\sfrac{1}{4}}$ by:
\begin{itemize}
\item ($M_{ab}  = 2m+1$, $\iota(\{a,b\}) = a$): collapsing edge $\{a_+, b_-\}$ from $E(a,b)'$, 
and subdividing each of edges $\{a_+,b_+\}$ and $\{a_-, b_-\}$ from $E(a,b)''$ into a path of length $m$. 
Consequently, $\overline X(a,b)_{\sfrac{1}{4}}$ is a cycle of length $2m+1$.
\item ($M_{ab}  = 2$): collapsing edges $\{a_+, b_-\}$ and $\{a_-, b_+\}$ from $E(a,b)'$. 
Consequently, $\overline X(a,b)_{\sfrac{1}{4}}$ is a disjoint union of two length $1$ loops.
\item ($M_{ab}  = 2m\geq 4$, $\iota(\{a,b\}) = a$): collapsing edge $\{a_+, b_-\}$ from $E(a,b)'$, 
subdividing the edge $\{a_+, b_-\}$ from $E_{\sfrac14}''$ into a path of length $m$, and
subdividing the edge $\{a_-, b_+\}$ from $E_{\sfrac14}''$ into a path of length $m-1$. 
Consequently, $\overline X(a,b)_{\sfrac{1}{4}}$ is a disjoint union of cycles of length $m$ each.
\end{itemize}
\end{desc}

The above description of $\overline X(a,b)_{\sfrac{1}{4}}$ gives us the following description of $\overline X_{\sfrac{1}{4}}$.
See Figure~\ref{fig:horizontal graph maps} for an example of the factorization of the map $X_{\sfrac{1}{4}}\to X_0$ as a homotopy equivalence $X_{\sfrac{1}{4}} \to \overline X_{\sfrac{1}{4}}$ and a combinatorial map $\overline X_{\sfrac{1}{4}}\to X_0$. In that example $M_{ab} = 3, M_{bc} = 7, M_{ca} = 5$.
\begin{figure}
\includegraphics[scale=0.25]{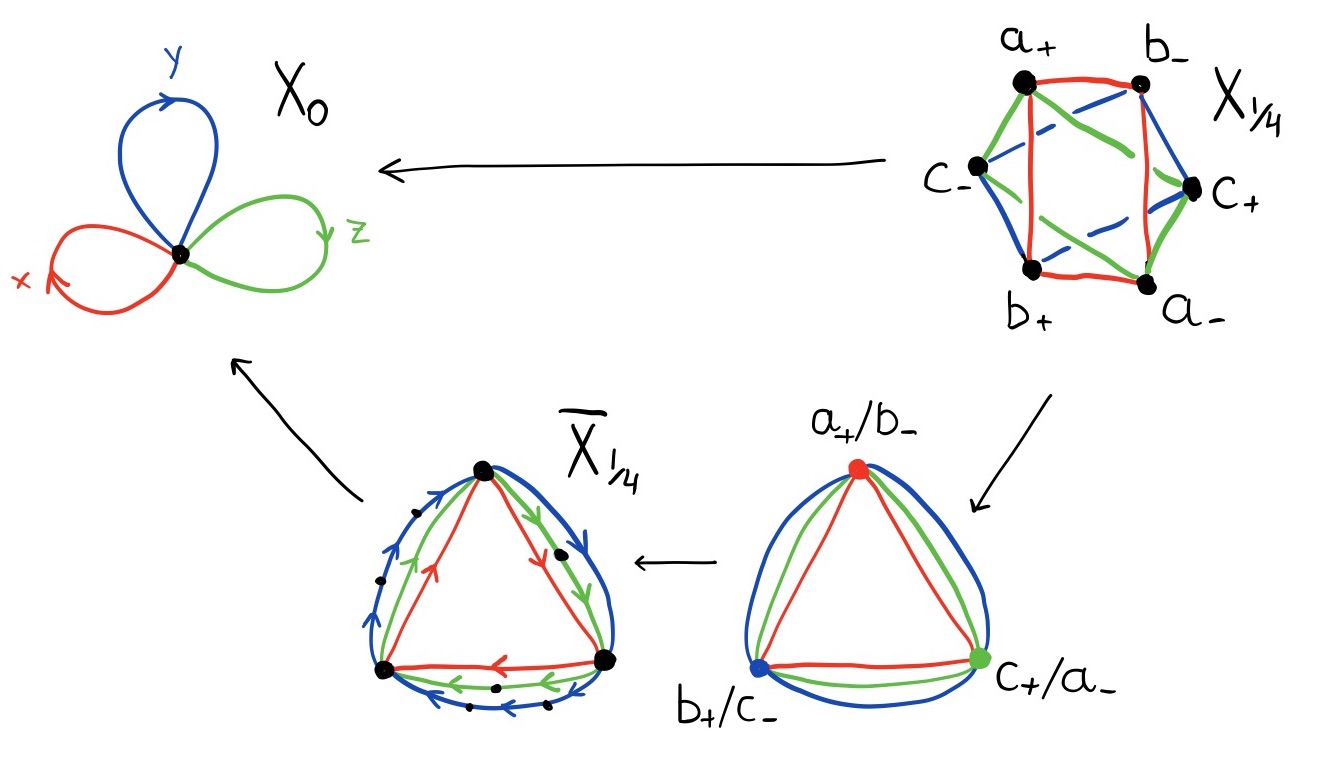}\caption{The graph $X_0, X_{\sfrac{1}{4}}, \overline X_{\sfrac{1}{4}}$ in $\Art_{375}$. The map $\overline X_{\sfrac{1}{4}}\to X_0$ is combinatorial.}\label{fig:horizontal graph maps}
\end{figure}
\begin{desc}\label{prop:structure of X14 bar}
The graph $\overline X_{\sfrac{1}{4}}$ can be described in terms of $\Gamma$ and $\iota$ as follows.
\begin{itemize}
\item There are two kinds of vertices in $\overline X_{\sfrac{1}{4}}$. 
Let $V_{old}$ denote the set of vertices that are the images of vertices in $X_{\sfrac{1}{4}}$, 
and $V_{new}$ consists of all other vertices that are introduced in the subdivision. 
There are two kinds of edges $\overline E',\overline E''$.
\item The vertices in $V_{old}$ correspond to the equivalence classes of $
V(X_{\sfrac{1}{4}}) =  V_+\sqcup V_-$ 
where the equivalence relation is generated by $a_+\sim b_-$ for every $\{a,b\}\in E(\Gamma)$ and every $a = \iota(\{a,b\})$ or $M_{ab} = 2$. 
\item The edges in $\overline E'$ are identified with the set 
$E' - \{\{a_+, b_-\}\in E'\mid \{a,b\}\in E(\Gamma) \text{ with }a = \iota(\{a,b\}) \text{ or } M_{ab}=2\}$, i.e.\
$\overline E'$ is the collection of all edges of $E'$ that do not get collapsed.
\item For each edge $\{a_{\pm},b_{\pm}\}$ in $E''$ there is a path of length $m$ or $m-1$ as in Descrription~\ref{lem:collapsing an edge}, consisting of edges of $\overline E''$ and joining appropriate vertices in $V_{old}$. 
The vertices inside such paths form the set $V_{new}$.
\end{itemize}
Each $\overline X(a,b)_{\sfrac14}$ admits a natural combinatorial immersion onto a corresponding loop of $X_0$. 
The map $\overline X_{\sfrac 14}\to X_0$ is defined piecewise using these maps $\overline X(a,b)_{\sfrac14}\to X_0$. In the next subsection, we characterize when $\overline X_{\sfrac 14}\to X_0$ is $\pi_1$-injective.
\end{desc}

%\begin{proof}
%Follows from Lemma~\ref{lem:collapsing an edge}.
%\end{proof}

%%%%%%%%%%%%%%%%%%%%%%%%%%
\subsection{Conditions for $\pi_1$-injectivity of $X_{\sfrac{1}{4}} \to X_0$}
%%%%%%%%%%%%%%%%%%%%%%%%%%
We are now ready to characterize when the map $X_{\sfrac 14}\to X_0$ is $\pi_1$-injective. 
The next two propositions ensure that the maps $X_{\sfrac 14}\to \overline X_{\sfrac 14}$ 
and $\overline X_{\sfrac 14} \to X_0$ respectively, are $\pi_1$-injective. 
We refer to Definition~\ref{defn:misdirected} for the definition of an (almost) misdirected cycles.
We emphasize that a misdirected even length cycle in the statement of Proposition~\ref{prop:homotopic X14} 
is not assumed to be simple.

\begin{prop}\label{prop:homotopic X14} 
Let $\Gamma$ be a simple graph with edges labelled by numbers $\geq 2$ 
with a partial orientation $\iota$ such that 
$\iota(e)$ of an edge $e$ is defined if and only if the label of $e$ is $\geq 3$. 
Then $X_{\sfrac{1}{4}} \to \overline X_{\sfrac{1}{4}}$ is a homotopy equivalence 
if and only if $\Gamma$ has no misdirected even length cycles 
and no cycles with all edges labelled by $2$.
\end{prop}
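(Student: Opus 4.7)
The plan is to reduce the statement to asking when a certain subgraph $T\subseteq X_{\sfrac{1}{4}}$ is a forest. By Description~\ref{lem:collapsing an edge}, the map $X_{\sfrac{1}{4}}\to \overline X_{\sfrac{1}{4}}$ factors as the subdivision of the edges of $E''$ singled out there (always a homotopy equivalence) followed by the collapse to a point of the edges of $E'$ singled out there. Collapsing a subcomplex of a CW complex is a homotopy equivalence exactly when each component of the collapsed subcomplex is contractible, so the proposition reduces to asking whether the subgraph $T$ of collapsed edges is a forest. Using Description~\ref{lem:structure of X14}, I identify $T$ explicitly as a subgraph of the bipartite double cover $\Gamma'$ of $\Gamma$: it contains both lifts $\{a_+,b_-\}$ and $\{a_-,b_+\}$ of every edge $\{a,b\}$ with $M_{ab}=2$, together with the single lift $\{a_+,b_-\}$ of every edge $\{a,b\}$ with $M_{ab}\geq 3$ and $\iota(\{a,b\})=a$. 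The task is therefore to show that $T$ has no cycle if and only if $\Gamma$ has no misdirected even cycle and no cycle with all edges labelled by $2$.

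For the $(\Leftarrow)$ direction I argue contrapositively, producing a cycle in $T$ from either type of cycle in $\Gamma$. If $\gamma=(a_1,\dots,a_n,a_1)$ is a cycle of $\Gamma$ with all labels $2$, both lifts of every edge lie in $T$, so the lift(s) of $\gamma$ supplied by Description~\ref{lem:structure of X14} (a single length-$2n$ cycle when $n$ is odd, or two length-$n$ cycles when $n$ is even) give closed walks in $T$. If $\gamma=(a_1,\dots,a_{2k},a_1)$ is a misdirected even cycle, I fix an alternating extension of $\iota|_\gamma$ and lift $\gamma$ to $(a_{1,+},a_{2,-},\dots,a_{2k,-},a_{1,+})$ in $\Gamma'$; the alternating orientation is exactly the condition for each edge of the lift to lie in $T$ (label-$\geq 3$ edges because the sign pattern matches $\iota$, label-$2$ edges automatically). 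In either case $T$ contains a nontrivial closed walk and hence a simple cycle, so it fails to be a forest.

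For the $(\Rightarrow)$ direction I take a simple cycle $C\subset T$ of length $2k$ (even because $\Gamma'$ is bipartite) and project it to a closed walk $w=(a_1,\dots,a_{2k},a_1)$ in $\Gamma$. Since $\Gamma$ is simple, consecutive edges of $w$ must be distinct, for otherwise the lift would revisit a vertex of $\Gamma'$ and so contradict the simplicity of $C$; hence $w$ is a combinatorial immersion $C_{2k}\to\Gamma$, i.e.\ a cycle of length $2k$ in the paper's sense. If every edge of $w$ has label $2$ we are done. Otherwise the alternating sign pattern $\epsilon_i\in\{+,-\}$ along $C$ forces, by the very definition of $T$, every label-$\geq 3$ edge of $w$ to be $\iota$-oriented from its $+$-vertex to its $-$-vertex; this is exactly the alternating source/sink condition of Definition~\ref{defn:misdirected}, and orienting the label-$2$ edges of $w$ in the same alternating pattern completes an extension of $\iota|_w$ exhibiting $w$ as a misdirected even cycle. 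The main point requiring care is that the projected walk $w$ is generally not a simple cycle of $\Gamma$: a vertex of $\Gamma$ can appear in $w$ twice, once with each sign in the lift. This is precisely why Definition~\ref{defn:misdirected} is formulated via combinatorial immersions and why the remark preceding the proposition is included; once this subtlety is kept in mind, the entire argument is a direct translation between the sign data on $\Gamma'$ and the orientation data on $\Gamma$.
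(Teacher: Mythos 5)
Your proof is correct and takes essentially the same route as the paper: reduce the claim to whether the collapsed subgraph $T\subseteq\Gamma'$ is a forest (the paper phrases this as ``there is a cycle with all edges collapsed''), then translate between the alternating $\pm$ sign pattern along a cycle of $\Gamma'$ and an alternating source/sink orientation on its projection to $\Gamma$. The one noticeable difference is in the converse direction: the paper splits into three cases according to whether the projected cycle is simple and its parity, whereas you keep the projection as a length-$2k$ combinatorial-immersion cycle and read off ``all labels $2$ or misdirected'' directly from the sign data, which is a mild simplification of the same idea.
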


\begin{proof}
We refer to Description~\ref{lem:structure of X14} for the structure of $X_{\sfrac 14}$ 
and to Description~\ref{prop:structure of X14 bar} for the structure of $\overline X_{\sfrac 14}$. 
By construction, $X_{\sfrac{1}{4}} \to \overline X_{\sfrac{1}{4}}$ is obtained by collapsing certain edges of $X_{\sfrac{1}{4}}$ followed by edge subdivision. 
The edge subdivision never changes the homotopy type of a graph, but edge collapsing might.
The map $X_{\sfrac{1}{4}} \to \overline X_{\sfrac{1}{4}}$ fails to be a homotopy equivalence
if and only if there is a cycle in $X_{\sfrac{1}{4}}$ with all edges collapsed in $\overline X_{\sfrac{1}{4}}$

First let us assume that $\Gamma$ has a misdirected even length cycle $(a_1, a_2, \dots, a_{n}, a_1)$. 
Then each of the edges in one of the cycles 
$(a_{1+}, a_{2-}, \dots, a_{n-}, a_{1+})$ or $(a_{1-}, a_{2+}, \dots, a_{n+}, a_{1-})$ 
of $\Gamma'\subseteq X_{\sfrac{1}{4}} $ gets collapsed. 
Thus, $X_{\sfrac{1}{4}} \to \overline X_{\sfrac{1}{4}}$ is not a homotopy equivalence.
Now suppose that $\Gamma$ has an odd length cycle $(a_1, a_2, \dots, a_{n}, a_1)$ with all edges labelled by $2$.
Then its lift to $\Gamma'$ is the cycle 
$(a_{1+}, a_{2-}, \dots, a_{n+}, a_{1-}, \dots, a_{n-}, a_{1+})$ 
and all of its edges get collapsed in $\Gamma'\subseteq \overline X_{\sfrac 14}$.

Conversely, suppose that there exists a cycle $\gamma'$ in $X_{\sfrac 1 4}$ all of whose edges are collapsed in $\overline X_{\sfrac 1 4}$.
%$\Gamma$ has no misdirected even length cycles and no cycles with all edges labelled by $2$. 
%It suffices to show that every cycle in $X_{\sfrac 1 4}$ contains at least one edge 
%that is not collapsed in $\overline X_{\sfrac 1 4}$. 
Only edges from the set $E'$ might be collapsed by Description~\ref{prop:structure of X14 bar}, 
so $\gamma'\subseteq \Gamma'$. 
Without loss of generality, we can assume that $\gamma'$ is a simple cycle. 
Let $\gamma$ be the image of $\gamma'$ in $\Gamma$.

First suppose $\gamma$ is not simple. 
Then there exists a vertex $a\in \Gamma$ such that $\gamma'$ passes through both $a_-$ and $a_+$, 
i.e.\ $\gamma'$ can be expressed as $(a_-, b_{1+}, b_{2-}, \dots, b_{k-}, a_+, c_{1-}, c_{2+}, \dots, c_{l+}, a_-)$ for some $k,l$. Since the plus and minus signs in labels of $\gamma$ alternate, the numbers $k,l$ must be even.
If each edge of $\gamma'$ is collapsed, that means that the partial orientation on $\gamma$ induced by $\iota$ extends to an orientation on $\gamma= (a, b_1, b_2, \dots, b_{k}, a, c_1, c_2, \dots, c_{l}, a)$ as pictured in Figure~\ref{fig:misdirected cycle}. 
\begin{figure}
\includegraphics[scale=0.25]{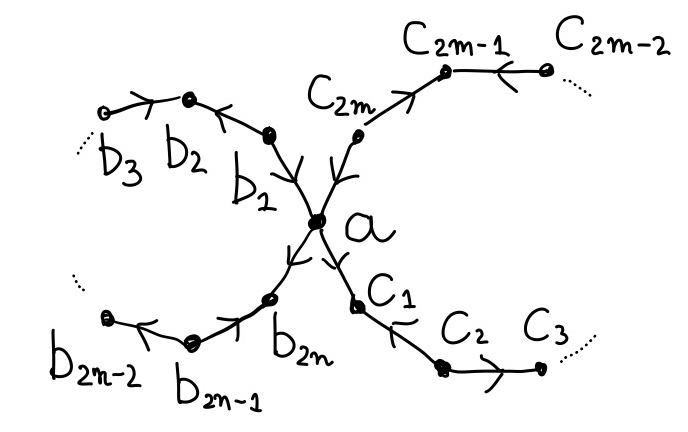}
\caption{The misdirected non-simple cycle $\gamma$.}\label{fig:misdirected cycle}
\end{figure}
In particular, $\gamma$ is a misdirected even length (non-simple) cycle. 
%
%Each of the subpath of $\gamma'$ joining $a_-$ and $a_+$ must have odd length by Description~\ref{lem:structure of X14}, and $\gamma$ is a non-simple even length cycle. By assumption, $\gamma$ is not misdirected, so at least one of the edges of $\gamma'$ does not get collapsed.
%

Now suppose that $\gamma$ is simple.
Either $\gamma'\to\gamma$ is two-to-one or one-to-one, depending on the parity of the length of $\gamma$.
If the length of $\gamma$ is odd, then $\gamma'\to \gamma$ is two-to-one. 
%By assumption $\gamma$ contains an edge $\{a,b\}$ whose label is not $2$. 
For every edge $\{a,b\}$ in $\gamma$, both edges $\{a_-, b_+\}$ and $\{a_+, b_-\}$ are contained in $\gamma'$, by  Description~\ref{lem:structure of X14}. Moreover, if they both get collapsed, that means that $M_{ab}=2$. Thus $\gamma$ is a cycle with all edges labelled by $2$.
%, $\gamma'$ contains both edges $\{a_-, b_+\}$ and $\{a_+, b_-\}$ 
%and, by Description~\ref{prop:structure of X14 bar}, only one of these two edges gets collapsed.
If the length of $\gamma$ is even, then $\gamma'\to \gamma$ is one-to-one.
This necessarily means that $\gamma$ is a misdirected even length cycle.
%then  by Description~\ref{lem:structure of X14}, 
%and again at least one of the edges of $\gamma'$ does not get collapsed. 
%If $\gamma'\to \gamma$ is two-to-one, then $\gamma$ has odd length and 
%by assumption contains an edge $\{a,b\}$ whose label is not $2$. 
%Then, by Description~\ref{lem:structure of X14}, $\gamma'$ contains both edges $\{a_-, b_+\}$ and $\{a_+, b_-\}$ 
%and, by Description~\ref{prop:structure of X14 bar}, only one of these two edges gets collapsed.
\end{proof}

\begin{prop}\label{prop:locally injective} 
Let $\Gamma$ be a simple graph with all labels $\geq 2$ with a partial orientation $\iota$ such that 
$\iota(e)$ of an edge $e$ is defined if and only if the label of $e$ is $\geq 3$.
Suppose $\Gamma$ has no misdirected even length cycles. % and cycles with all labels $2$.
Then $\overline X_{\sfrac{1}{4}}\to X_0$ is a combinatorial immersion 
if and only if $\Gamma$ has no almost misdirected cycles.
\end{prop}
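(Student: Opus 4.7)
The map $\overline X_{\sfrac{1}{4}} \to X_0$ is combinatorial by construction in Description~\ref{prop:structure of X14 bar}, so the plan is to pin down when it is locally injective at each vertex. At any $v \in V_{new}$ (an interior vertex of a subdivided $\overline E''$-arc) local injectivity is automatic, because $v$ has exactly two incident edges, oriented oppositely inside the cycle component of $\overline X(a,b)_{\sfrac{1}{4}}$ containing $v$. At a vertex $v \in V_{old}$, the half-edges of color $x_{ab}$ meeting $v$ come precisely from the old vertices of $\overline X(a,b)_{\sfrac{1}{4}}$ that lie in the equivalence class of $v$, and every such old vertex contributes exactly one incoming and one outgoing $x_{ab}$-half-edge, since each connected component of $\overline X(a,b)_{\sfrac{1}{4}}$ is a cycle immersed onto the loop $x_{ab}$ by Description~\ref{lem:collapsing an edge}. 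Hence I would conclude that $\overline X_{\sfrac{1}{4}} \to X_0$ fails to be a combinatorial immersion if and only if for some $\{a,b\}\in E(\Gamma)$ two distinct old vertices of $\overline X(a,b)_{\sfrac{1}{4}}$ lie in a single equivalence class of $V_{old}$.

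Next I would reinterpret this on the auxiliary graph $\tilde\Gamma$ with vertex set $V_+\sqcup V_-$ and an edge $\{a_+,b_-\}$ whenever $\{a,b\}\in E(\Gamma)$ and either $a=\iota(\{a,b\})$ or $M_{ab}=2$; walks in $\tilde\Gamma$ correspond exactly to walks in $\Gamma$ with alternating orientations. Fix an edge $\{a,b\}$, WLOG $\iota(\{a,b\})=a$ when $M_{ab}\geq 3$. The old vertices of $\overline X(a,b)_{\sfrac{1}{4}}$ are $[a_+]=[b_-]$, $a_-$ and $b_+$ (with obvious collapsing when $M_{ab}=2$), so immersion failure at the edge $\{a,b\}$ amounts to one of three identifications in $\tilde\Gamma$: (I)~$a_+\sim a_-$, (II)~$a_+\sim b_+$, or (III)~$a_-\sim b_+$.

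For the direction ``almost misdirected cycle $\Rightarrow$ failure of immersion'', I would take an almost misdirected cycle $(a_1,\dots,a_n,a_1)$ with $(a_1,\dots,a_n)$ misdirected (WLOG oriented $a_1\to a_2\leftarrow a_3\to\cdots$). The alternating path collects $a_{1+},a_{2-},a_{3+},\dots$ into a single class, and I would then run a short case analysis on the parity of $n$ together with the orientation (or label $2$) of the closing edge $\{a_n,a_1\}$. In each subcase the closing edge either forces one of identifications (I)--(III) for the edge $\{a_n,a_1\}$, giving immersion failure, or else completes the path into a fully misdirected cycle of even length, which is ruled out by the standing hypothesis.

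For the reverse direction I would argue the contrapositive: assuming immersion failure, I would select a walk $\omega$ in $\tilde\Gamma$ realizing one of (I)--(III) of \emph{minimum} total length, where the minimum is taken simultaneously over all three witness types and all candidate edges $\{a,b\}$. Minimality forces $\omega$ to be simple in $\tilde\Gamma$ and to avoid revisiting any intermediate $\Gamma$-vertex $c$ in both signs, since a sub-walk between the two occurrences of $c$ would be a strictly shorter Case~(I) witness at $c$. The underlying walk in $\Gamma$ is therefore simple except possibly at the endpoints; in Cases (II)--(III) the edge $\{a,b\}$ itself cannot lie on $\omega$ by the same minimality, so adjoining it closes a simple cycle whose misdirected part is exactly $\omega$, giving an almost misdirected cycle. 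In Case~(I) the walk in $\Gamma$ is already a simple cycle, with the vertex $a$ adjacent to one incoming and one outgoing edge of $\omega$ and all other vertices being alternating sinks/sources, which again produces an almost misdirected cycle. The main obstacle is this minimality argument: the minimization must be run across all three witness types and all failing edges at once, otherwise a sub-walk can lie outside the class of witnesses one has chosen and the simplification fails.
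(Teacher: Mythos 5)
Your argument has the same skeleton as the paper's: both reduce the immersion property to the combinatorial condition that no two distinct ``old'' vertices of any $\overline X(a,b)_{\sfrac 14}$ fall into a single global equivalence class of $V_{old}$, and then translate this back and forth to almost misdirected cycles by reading alternating orientations off walks of collapsed $E'$-edges in $\Gamma'$. Your forward direction (case analysis on the parity of $n$ and on the label/orientation of the closing edge) matches the paper's argument. The reverse direction is where you genuinely diverge: the paper takes a collapsed path witnessing the failure (possibly extending it by the always-collapsed edges over a label-$2$ edge so both endpoints lie over the same $\Gamma$-vertex) and projects it to $\Gamma$ directly, exploiting the fact that Definition~\ref{defn:misdirected} does not require cycles to be simple; you instead run a global minimization over all witness types and all edges at once and deduce simplicity of the witness, which, as you correctly note, is the delicate point --- minimizing within a single witness type or at a single edge would not rule out a shorter sub-walk lying outside that class. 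This is more than the statement requires, but it is a sound alternative, and it has the side benefit of automatically producing a genuine combinatorial cycle (no backtracking).

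One point needs repair. For a label-$2$ edge $\{a,b\}$, the identification (III) $a_-\sim b_+$ holds trivially: the $E'$-edge $\{a_-,b_+\}$ is itself collapsed, so $a_-$ and $b_+$ already lie in the \emph{same} local class of $\overline X(a,b)_{\sfrac 14}$ (your parenthetical ``obvious collapsing when $M_{ab}=2$'' acknowledges this, but then ``immersion failure ... amounts to one of three identifications'' does not adjust for it). Hence (III) is \emph{not} a failure witness when $M_{ab}=2$. As written, a length-$1$ type-(III) ``witness'' at any label-$2$ edge would be the global minimum and the minimization argument would degenerate. You should restrict the minimization to walks whose two endpoints lie in \emph{distinct} local classes of the relevant $\overline X(a,b)_{\sfrac 14}$ --- equivalently, drop (III) for label-$2$ edges, where (I) and (II) already coincide and are the only genuine failure condition. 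With that correction the plan goes through.
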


\begin{proof} 
By construction $\overline X_{\sfrac{1}{4}}\to X_0$ is always a combinatorial map. It fails to be an immersion precisely when there are more than one oriented edges at some vertex of $\overline X_{\sfrac{1}{4}}$ mapping to the same oriented edge of $X_0$. 

Recall that the edges of $X_0$ are in one-to-one correspondence with edges of $E(\Gamma)$, and 
for each edge $x$ of $X_0$ the edges of $X_{\sfrac{1}{4}}$ mapping to $x$ are exactly those 
coming from a single $X(a,b)_{\sfrac{1}{4}}$ for some $\{a,b\}\in E(\Gamma)$. 
Recall that every graph $X(a,b)_{\sfrac{1}{4}}$ is a single cycle if $M_{ab}$ is odd, and a union of two cycles if $M_{ab}$ is even. 
We claim that the map $\overline X_{\sfrac{1}{4}}\to X_0$ is not an immersion if and only if there exists a path in $\Gamma'$ joining two vertices $v_1, v_2$ of $X(a,b)_{\sfrac{1}{4}}$ such that 
\begin{itemize}
\item $v_1, v_2$ are not identified within $\overline X(a,b)_{\sfrac{1}{4}}$ as described in Description~\ref{lem:collapsing an edge},
\item the path gets entirely collapsed in $\overline X_{\sfrac{1}{4}}$.
\end{itemize}
Indeed, if such path exists then $v_1, v_2$ project to distinct vertices $\bar v_1, \bar v_2\in\overline X(a,b)_{\sfrac{1}{4}}$. 
Each $\bar v_i$ is adjacent to the unique oriented edge $e_i$ that maps onto the oriented edge $x$ in $X_0$. 
By the second condition $\bar v_1, \bar v_2$ become identified in $\overline X_{\sfrac{1}{4}}$. 
However, the edges $e_1, e_2$ remain distinct in $\overline X_{\sfrac{1}{4}}$, and therefore $\overline X_{\sfrac{1}{4}}\to X_0$ is not an immersion. 
Conversely, if there are two oriented edges $\bar e_1, \bar e_2$ in $\overline X_{\sfrac{1}{4}}$ that maps onto the oriented edge $x$ in $X_0$, then $\bar e_1, \bar e_2$ are images of distinct oriented edges $e_1, e_2$ in $\overline X(a,b)_{\sfrac{1}{4}}$. The initial vertices of $e_1, e_2$ must be distinct vertices in $\overline X(a,b)_{\sfrac{1}{4}}$ which become identified in $\overline X_{\sfrac{1}{4}}$. Thus their preimages in $X_{\sfrac{1}{4}}$ must be connected by a path as above.

We first prove that if $\Gamma$ contains an almost misdirected cycle, then $\overline X_{\sfrac{1}{4}}\to X_0$ is not a combinatorial immersion.
 An odd length almost misdirected cycle $\gamma = (a_1, a_2, \dots, a_n, a_1)$ in $\Gamma$ where the path $(a_1, a_2, \dots, a_n)$ is misdirected, yields a path $\gamma'$ in $\Gamma'$ joining either $a_{1-}, a_{1_+}$ or $a_{n-}, a_{n+}$ that gets entirely collapsed in $\overline X_{\sfrac{1}{4}}$. 
 Neither the pair $a_{1-}, a_{1_+}$ or $a_{n-}, a_{n+}$ is identified within any copy of $X(a,b)_{\sfrac{1}{4}}$. 
 Thus the map $\overline X_{\sfrac{1}{4}}\to X_0$ is not an immersion.
  
Now suppose that $\gamma = (a_1,\dots, a_n, a_1)$ is an even length almost misdirected cycle where the path $(a_1, a_2, \dots, a_n)$ is misdirected. By the assumption, $\gamma$ is not misdirected and
%Without loss of generality by possibly replacing it by $(a_n,\dots, a_1)$, 
we can assume (by possibly replacing $\gamma$ with the cycle $(a_1, a_n,\dots, a_1)$) that there exists an orientation $\bar \iota$ extending the partial on $\gamma$ induced by $\iota$ such that $\bar \iota (\{a_1, a_2\}) = a_2$, $\bar \iota (\{a_2, a_3\}) = a_2, \dots, \bar \iota (\{a_{n-1}, a_n\}) = a_n,$ and $\bar \iota (\{a_1, a_n\}) = a_1$. 
Then the path $a_{1-}, a_{2+}, \dots  a_{n_+}$ gets entirely collapsed. 
The vertices $a_{1-}, a_{n_+}$ become identified but the edge $\{a_{1-}, a_{n_+}\}$ of $\Gamma'$ was not collapsed in $\overline  X(a_1,a_n)_{\sfrac{1}{4}}$. That means that the map $\overline X_{\sfrac{1}{4}}\to X_0$ not a combinatorial immersion.

Conversely, suppose that there exists a path $\gamma'$ in $X_{\sfrac{1}{4}}$ that joins one of $a_-,b_+$ with one of $a_+, b_-$ with all edges getting collapsed in $\overline X_{\sfrac{1}{4}}$ such that $\gamma'$ is not a single edge of $X(a,b)_{\sfrac{1}{4}}$.
%two edges of $\overline X_{\sfrac{1}{4}}$ that map to $x$.
%
First consider the case where $M_{ab} = 2$.
%For $ X(a,b)_{\sfrac{1}{4}}$ to be mapped to $X_0$ in a non-locally injective manner, there must be a path $\gamma'$ in $X_{\sfrac{1}{4}}$ that joins one of $a_-,b_+$ with one of $a_+, b_-$ with all edges getting collapsed in $\overline X_{\sfrac{1}{4}}$. 
Since only certain edges from the set $E'$ get collapsed we can assume that $\gamma'\subseteq \Gamma'$. 
Without loss of generality by possibly extending $\gamma'$ by extra edges $\{a_-, b_+\}$ or $\{a_+, b_-\}$, we can assume that $\gamma'$ joins $a_-$ and $a_+$.
Then $\gamma'$ projects to $\gamma \in \Gamma$, which is an odd length almost misdirected cycle. 
See Figure~\ref{fig:almost misdirected combinatorial immersion} (left).
\begin{figure}
\includegraphics[scale=0.25]{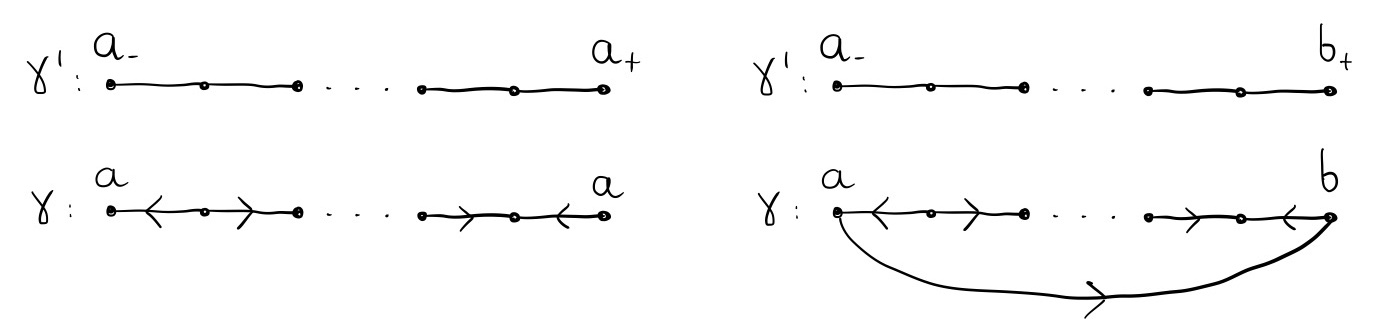}
\caption{The orientation on $\gamma$ extending the partial orientation induced by $\iota$.}\label{fig:almost misdirected combinatorial immersion}
\end{figure}

Now suppose $M_{ab}\neq 2$, and let $\iota(\{a,b\}) = a$, i.e.\ the edge $\{a_+, b_-\}$ of $\Gamma'$ is collapsed but the edge $\{a_-, b_+\}$ remains uncollapsed in $\overline X_{\sfrac{1}{4}}$.
If there is a path $\gamma'$ in $\Gamma'$ joining $a_-$ and $a_+$, or $b_-$ and $b_+$ with all edges getting collapsed,
then the argument above again gives an odd length almost misdirected cycle in $\Gamma$. 
Otherwise, %for $X(a,b)_{\sfrac{1}{4}}$ to be mapped to $X_0$ in a non-locally injective manner, 
there must be a path $\gamma'$ in $X_{\sfrac{1}{4}}$ joining $a_-$ with $b_+$. 
Then $\gamma'$ projects to $\gamma \in \Gamma$, 
such that $\gamma\cup\{a,b\}$ is an even length almost misdirected cycle (which is not misdirected).
See Figure~\ref{fig:almost misdirected combinatorial immersion} (right).

\end{proof}

%%%%%%%%%%%%%%%%%%%%%%%%%%
\subsection{Proof of the Splitting Theorem}\label{sec:decomposition}
%%%%%%%%%%%%%%%%%%%%%%%%%%
We are finally ready to prove Theorem~\ref{thm:decomposition}.

\begin{proof}[Proof of Theorem~\ref{thm:decomposition}]
By Proposition~\ref{prop:homotopic X14} and Proposition~\ref{prop:locally injective}, 
the map $X_{\sfrac 14}\to X_0$ factors as a composition of a homotopy equivalence and a combinatorial immersion,
and thus is $\pi_1$-injective. 
By Lemma~\ref{lem:connected components of X14}, $X_{\sfrac 14}$ is connected 
if and only if $\Gamma$ is not a bipartite graph with all labels even.
In such case, the conclusion follows from Lemma~\ref{lem:pushout}.
If $\Gamma$ is a bipartite graph with all labels even, 
then by Lemma~\ref{lem:connected components of X14} $X_{\sfrac 14}$ has two connected components, 
and each of them is a copy of $X_{\sfrac 12}$.
By Lemma~\ref{lem:hnn} $\Art_{\Gamma}$ splits as an HNN-extension $A*_B$ where $A = \pi_1 X_0$ and $B=\pi_1X_{\sfrac 12}$. 
If $\Gamma$ is not a bipartite graph with all labels even, then by Lemma~\ref{lem:pushout} $\Art_{\Gamma}$ splits as $A*_CB$ where $C=\pi_1 X_{\sfrac 14}$. 
Since $X_0$ is a bouquet of $|E(\Gamma)|$ loops, $\rank A = |E(\Gamma)|$.
The graph $X_{\sfrac 12}$ is a copy of $\Gamma$ with doubled edges, 
so $\euler(X_{\sfrac{1}{2}}) = |V(\Gamma)| - 2|E(\Gamma)|$.
Hence $\rank B=1 - |V(\Gamma)| + 2|E(\Gamma)|$.
In the case of amalgamated product, $\rank C = 2\rank B -1 = 1 -2|V(\Gamma)| +4|E(\Gamma)|$, since  the index of $C$ in $B$ is two.
\end{proof}

%%%%%%%%%%%%%%%%%%%%%%%%%%
%\subsection{Twisted double structure}\label{sec:twisted double}
%%%%%%%%%%%%%%%%%%%%%%%%%%

\begin{remark}[Twisted double of free groups as index two subgroup of $G$]\label{rem:Artin twisted double}
Let $G$ be any amalgamated product $A*_CB$ of groups such that the index of $C$ in $B$ is two. 
Let $g$ be a representative of the nontrivial coset of $B/C$ and 
denote by $\beta:C\to C$ the automorphism given by $\beta(h) = g^{-1}hg$.
Since $g^2\in C$, $\beta^2$ is an inner automorphism of $C$.
The group $G = A*_CB$ has an index two subgroup 
isomorphic to the twisted double $D(A, C,\beta)$, which is the kernel of the homomorphism $G \to B/C$. 

In particular, every $\Art_{\Gamma}$ that splits as an amalgamated product as in Theorem~\ref{thm:decomposition}
has an index two subgroup $D(A,C,\beta)$. 
Geometrically, $\beta$ is a nontrivial deck transformation of the graph $X_{\sfrac 1 4}$ 
as a covering space of $X_{\sfrac 1 2}$.
In the case of the three generator $\Art_{\Gamma}$, $\beta$ can be viewed as a rotation by  $\pi$ 
(with respect to the planar representation in Figure~\ref{fig:X1/4}). 
The choice of the element $g\in B-C$ corresponds to 
the choice of a path joining a basepoint in $X_{\sfrac 1 4}$ with the opposite vertex 
(e.g. $a_+$ with $a_-$).
\end{remark}

\subsection{Explicit splittings for $3$-generator Artin groups}
%%%%%%%%%%%%%%%%%%%%%%%%%%
Let us now explicitly describe the splitting in Theorem~\ref{thm:decomposition} in the case of large type Artin group where $\Gamma$ is a triangle.

\begin{cor}\label{cor:splitting of 3 gen}
Let $\Art_{MNP}$ be an Artin group where $M,N,P\geq 3$.  Then $\Art_{MNP} = A*_CB$ where $A\simeq F_3$, $B\simeq F_4$ and $C\simeq F_7$, and $[B:C] = 2$. The map $C\to A$ is induced by the maps pictured in Figure~\ref{fig:mapCtoA}.
\begin{figure}
\includegraphics[scale=0.25]{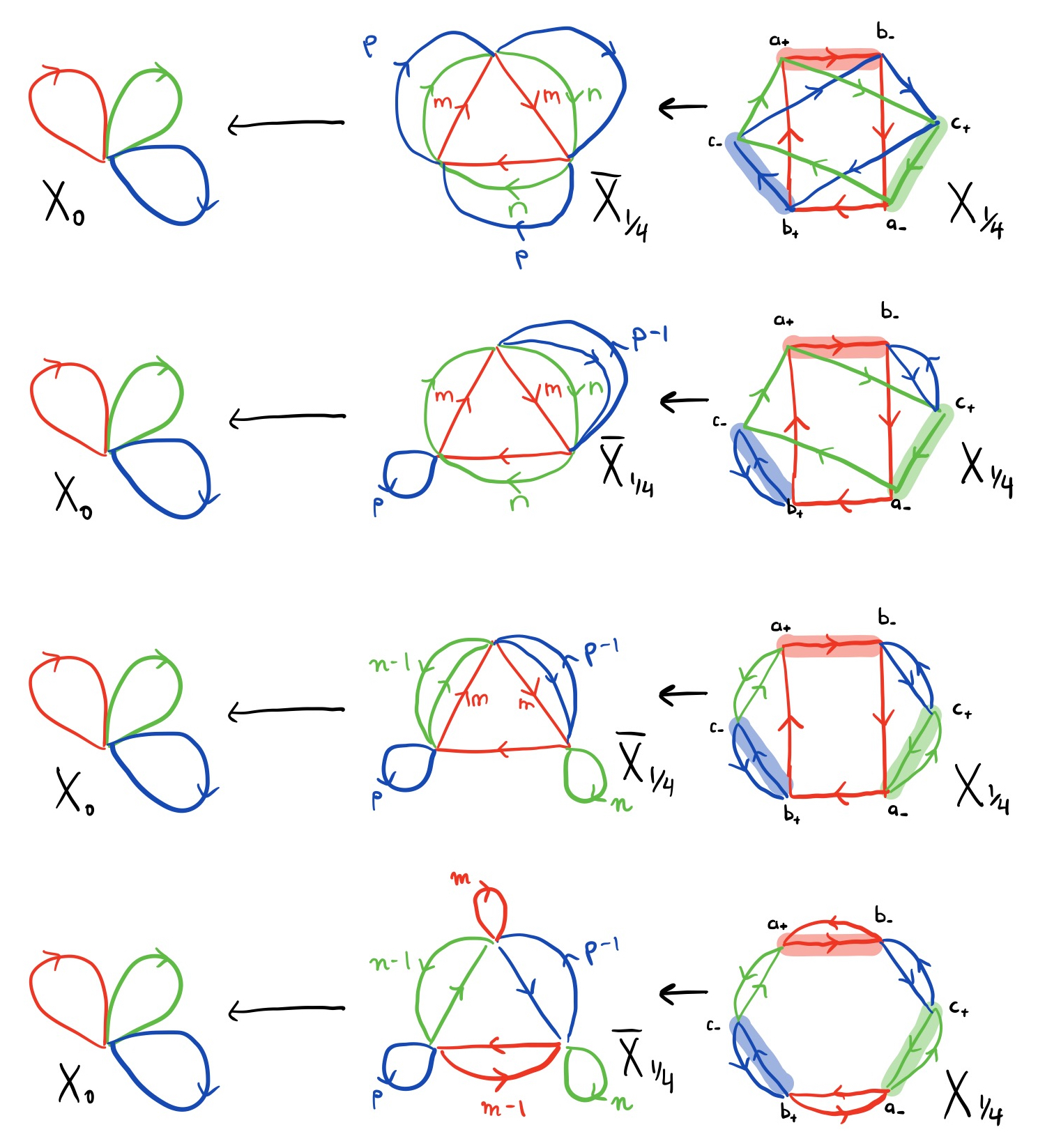}
\caption{The map $X_{\sfrac{1}{4}}\to X_0$ when (1) none, (2) one, (3) two or (4) all of $M_{ab} = M,M_{bc} = N, M_{ca} =P$ are even, respectively.
Specifically, $M=2m$ or $2m+1$, $N=2n$ or $2n+1$, and $P=2p$ or $2p+1$. 
Here we use the convention where the edge labelled by a number $k$ is a concatenation of $k$ edges of the given color. The distinguished edges in $X_{\sfrac{1}{4}}$ are the ones that get collapsed to a vertex in $\overline X_{\sfrac{1}{4}}$.}\label{fig:mapCtoA}
\end{figure}
Moreover, $\Art_{MNP}$ has an index two subgroup that is isomorphic to the twisted double $D(A,C,\beta)$ where $\beta:C\to C$ is given by $\beta(h) = g^{-1}hg$ for some (equivalently any) $g\in B-C$.
\end{cor}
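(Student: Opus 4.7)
The plan is to apply Theorem~\ref{thm:decomposition} directly, after verifying its hypotheses for the triangle graph $\Gamma$. First I would equip $\Gamma$ with the cyclic orientation $\iota$ as in Figure~\ref{fig:orientation on a triangle}; because each label $M,N,P\geq 3$, $\iota$ is defined on every edge. The only cycle in $\Gamma$ is the $3$-cycle itself, which is directed under $\iota$, and in particular it is neither misdirected (since a misdirected cycle must have even length, and the only length-$3$ cycle here is already directed) nor almost misdirected (an almost misdirected $3$-cycle would require a length-$2$ misdirected subpath, but every length-$2$ subpath of the cyclically oriented triangle is directed). Hence $\iota$ is admissible in the sense of Definition~\ref{defn:admissible}.

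Next I would invoke Theorem~\ref{thm:decomposition}. A triangle is never bipartite, so the conclusion yields a free product with amalgamation $\Art_{MNP} = A*_C B$ with $A,B,C$ of finite rank. Substituting $|V(\Gamma)| = |E(\Gamma)| = 3$ into the rank formulas gives $\rank A = 3$, $\rank B = 1 - 3 + 6 = 4$, and $\rank C = 1 - 6 + 12 = 7$, together with $[B:C] = 2$. This establishes the algebraic part of the statement.

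The identification of the map $C\to A$ with the maps in Figure~\ref{fig:mapCtoA} is essentially unpacking the constructions of Section~\ref{sec:CtoA}. The inclusion $C\to A$ is induced by the composition $X_{\sfrac 14}\hookrightarrow N_0\to X_0$, and this factors through $\overline X_{\sfrac 14}$ as a homotopy equivalence followed by a combinatorial immersion (Propositions~\ref{prop:homotopic X14} and~\ref{prop:locally injective}). For each of the four cases (all odd, one even, two even, all even) one reads off the image from Description~\ref{lem:collapsing an edge}: an edge of $E(a,b)'$ of the form $\{a_+,b_-\}$ at each oriented edge $\{a,b\}$ with $\iota(\{a,b\})=a$ gets collapsed, while edges of $E(a,b)''$ get subdivided into paths of length $m$ (odd case) or $m$ and $m-1$ (even case with $M_{ab}=2m$). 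Assembling these pieces around the triangle yields precisely the graphs in Figure~\ref{fig:mapCtoA}. The only verification needed here is routine bookkeeping.

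Finally, for the twisted double statement, I would appeal to Remark~\ref{rem:Artin twisted double}: since $[B:C]=2$, the homomorphism $\Art_{MNP}=A*_CB\to B/C\simeq \Z/2$ has kernel of index two, and this kernel is isomorphic to $D(A,C,\beta)$ with $\beta(h) = g^{-1}hg$ for any chosen $g\in B-C$. Different choices of $g$ differ by an element of $C$, which changes $\beta$ by an inner automorphism of $C$ and yields an isomorphic twisted double, so the phrase \emph{some (equivalently any)} is justified. The only mildly delicate step in this proof is the verification that the cyclic orientation on the triangle is admissible, but even this reduces to observing that the length-$3$ cycle is directed and there are no other cycles.
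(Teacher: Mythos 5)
Your proof is correct and follows essentially the same route as the paper: equip the triangle with the cyclic orientation, verify admissibility, invoke Theorem~\ref{thm:decomposition} with $|V(\Gamma)|=|E(\Gamma)|=3$ to get the rank computations and $[B:C]=2$, read off the maps from Section~\ref{sec:CtoA}, and cite Remark~\ref{rem:Artin twisted double} for the twisted double. One small imprecision: you write that ``the only cycle in $\Gamma$ is the $3$-cycle itself,'' but the paper's notion of cycle is a combinatorial immersion $C_n\to\Gamma$ and explicitly allows non-simple cycles in the hypotheses of Proposition~\ref{prop:homotopic X14}; in the triangle these are the $k$-fold traversals (length $3k$), which under the cyclic orientation are all directed and hence neither misdirected nor almost misdirected, so your conclusion stands.
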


\begin{proof}
Since $\Gamma$ is a triangle, we have $|V(\Gamma)| = |E(\Gamma)| = 3$. 
By ordering $\Gamma$ cyclically, we obtain a graph without misdirected cycles. 
By Theorem~\ref{thm:decomposition}, $\Art_{\Gamma}$ splits as $A*_C B$ 
where $\rank A = 3$, $\rank B = 1-3 +2*3 = 4$ and $\rank C = 2*4-1 = 7$. 
The maps $X_{\sfrac 14} \to X_0$ inducing $A\to C$ in Figure~\ref{fig:mapCtoA} 
come directly from the descriptions in Section~\ref{sec:CtoA}. 
The index two subgroup isomorphic to a twisted double comes from Remark~\ref{rem:Artin twisted double}.
\end{proof}

\begin{exa}[$\Art_{333}$]\label{exa:Art333}
By Corollary ~\ref{cor:splitting of 3 gen}, $\Art_{333}$ splits as $F_3*_{F_7}F_4$ 
and the map $\overline X_{\sfrac1 4}\to X_0$ is a regular cover of degree $3$. 
See the top of Figure~\ref{fig:mapCtoA} with $m=n=p=1$. 
Thus $C\simeq F_7$ is a normal subgroup in each of the factors and $[C:A] = 3$. 
This splitting of $\Art_{333}$ as $F_3*_{F_7}F_4$ was first proved in \cite{Squier87}. 
We have the following short exact sequence
\[
1\to F_7 \to \Art_{333}\to \Z/3*\Z/2\to 1.
\]
We conclude that $\Art_{333}$ is (fin.~rank free)-by-(virtually fin.~rank free), 
and therefore virtually (fin.~rank free)-by-free. 
In particular, $\Art_{333}$ is virtually a split extension of a finite rank free group by a free group. 
Since every split extension of a finitely generated residually finite group 
by residually finite group is residually finite \cite{Malcev83}, 
$\Art_{333}$ is residually finite. 
\end{exa}

%%%%%%%%%%%%%%%%%%%%%%%%%%%%%%%%%%%%%%%%%%%%%%%%%%%%
%%%%%%%%%%%%%%%%%%%%%%%%%%%%%%%%%%%%%%%%%%%%%%%%%%%%
\section{Residual finiteness of 3-generator Artin groups}\label{sec:rf of 3 gen}
%%%%%%%%%%%%%%%%%%%%%%%%%%%%%%%%%%%%%%%%%%%%%%%%%%%%
%%%%%%%%%%%%%%%%%%%%%%%%%%%%%%%%%%%%%%%%%%%%%%%%%%%%
In this section, we prove Theorem~\ref{thm:main}. 
By Corollary~\ref{cor:splitting of 3 gen}, $\Art_{MNP}$ with $M,N,P\geq 3$ splits 
as a free product with amalgamation $A*_C B$ of finite rank free groups, 
and is virtually a twisted double $D(A,C,\beta)$. 
Throughout this section, $A,B,C$ are the groups from the splitting in Corollary~\ref{cor:splitting of 3 gen} and $M,N,P\geq 4$.
We begin with computing how far the subgroup $C$ is from being malnormal in $A$.
Then we prove Theorem~\ref{thm:main} (stated as Corollary~\ref{cor:at least one even} and Corollary~\ref{cor:all odd}) by applying Theorem~\ref{thm:conditions for rf}. 
In Section~\ref{sec:at least one even} we consider the easier case where at least one of $M,N,P$ is even 
and then in Section~\ref{sec:all exponents odd} we proceed with the case where $M,N,P$ are all odd.

%%%%%%%%%%%%%%%%%%%%%%%%%%
\subsection{Failure of malnormality} 
%%%%%%%%%%%%%%%%%%%%%%%%%%
A twisted double  $D(A,C,\beta)$ where $A,C$ are finite rank free groups and $C$ is malnormal in $A$ is hyperbolic by \cite{BestvinaFeighn92}. 
However, $\Art_{\Gamma}$ is never hyperbolic, unless $\Gamma$ is a single point, 
in which case $\Art_{\Gamma} = \Z$. 
Thus the intersection $C^g\cap C$ must be nontrivial for some $g\in A$. 
Understanding how the edge group $C$ intersects its conjugates plays a crucial role in our proof.

The intersection $C^g\cap C$ can be computed using the fiber product $\overline X_{\sfrac 1 4}\otimes_{X_0}\overline X_{\sfrac 1 4}$ (see Section~\ref{sec:fiber}).
The maps $\overline X_{\sfrac 14}\to X_0$ is described in Section~\ref{sec:CtoA} 
and pictured in Figure~\ref{fig:mapCtoA}. 

Let $F$ denote the fiber product $\overline X_{\sfrac 1 4}\otimes_{X_0}\overline X_{\sfrac 1 4}$.
The vertex set $V(F)$ is the product $V(\overline X_{\sfrac 14})\times V(\overline X_{\sfrac 14})$ 
and the edge set $E(F)$ is a subset of $E(\overline X_{\sfrac 1 4})\times E(\overline X_{\sfrac 1 4})$. 
All the nontrivial connected components of $F$ (i.e.\ the ones without vertices of the form $(v,v)$ for some $v\in V(\overline X_{\sfrac 14})$) correspond to some $C\cap C^g$ where $g\notin C$ by \cite{Stallings83}. 

Let $Y$ be either $\overline X_{\sfrac 1 4}$ or $\overline X_{\sfrac 1 4}\otimes_{X_0}\overline X_{\sfrac 1 4}$. 
We continue to represent the map $Y\to X_0 $ by coloring the edges of $Y$ where each color represents one of the edges of $X_0$.
We say a cycle or a path in $Y$ is \emph{monochrome}, if it is mapped onto a single loop in $Y$.

Note that any two simple monochrome cycles in $\overline X_{\sfrac 1 4}$ of the same color, have the same length.
Hence all the simple monochrome cycles lift to their copies in $F$. 
Thus any connected component of $F$ 
is a union of simple monochrome cycles whose lengths are the same as in $\overline X_{\sfrac 1 4}$. 
The branching vertices (i.e.\ of valence $>2)$ of connected components of $F$
are contained in $V_{old}\times V_{old}\subseteq V(\overline X_{\sfrac 14})\times V(\overline X_{\sfrac 14})$, 
since $V_{old}$ are the only branching vertices of $\overline X_{\sfrac 14}$. In particular, all the segments (i.e.\ paths between branching vertices with all internal vertices of valence $2$) in $F$ are monochrome.

\begin{lem}[All odd]\label{lem:fiber all odd} 
Suppose $(M,N,P) = (2m+1,2n+1,2p+1)$ where $m,n,p\geq 2$. 
Then the intersection $C^g\cap C$ for $g\in A-C$ is either trivial, 
or its conjugacy class is represented by a subgraph of the graph in Figure~\ref{fig:fiber product}.
\begin{figure}[h]\includegraphics[scale=0.3]{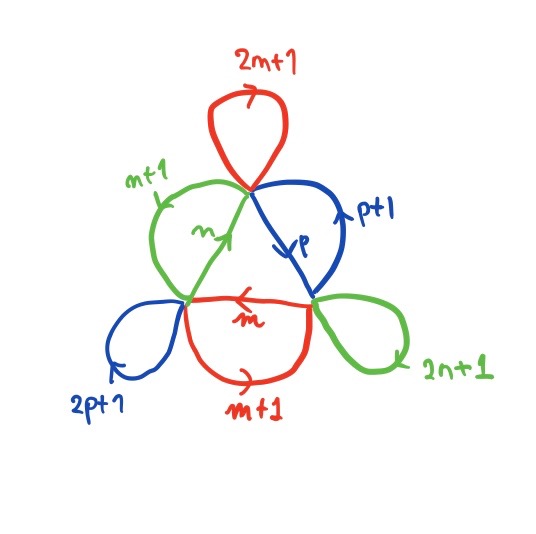}\caption{A non-trivial component of $F$, when $M,N,P$ are all odd. The vertex $(v_r, v_g)$ is the bottom left.}\label{fig:fiber product}\end{figure}
\end{lem}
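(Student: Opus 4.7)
The plan is to identify non-trivial intersections $C^g \cap C$ (up to conjugacy) with non-trivial connected components of the fiber product $F := \overline X_{\sfrac{1}{4}} \otimes_{X_0} \overline X_{\sfrac{1}{4}}$, using the Stallings description recalled in Section~\ref{sec:fiber}. Since $\overline X_{\sfrac{1}{4}} \to X_0$ is a combinatorial immersion (Proposition~\ref{prop:locally injective}) and $\pi_1$-injective (Proposition~\ref{prop:homotopic X14}), this identification applies. With the cyclic orientation of the triangle $\Gamma$ used in Corollary~\ref{cor:splitting of 3 gen}, Descriptions~\ref{lem:collapsing an edge} and~\ref{prop:structure of X14 bar} show that $\overline X_{\sfrac{1}{4}}$ is the union of three simple monochrome cycles---red of length $M$, green of length $N$, blue of length $P$---meeting at exactly three branching vertices $V_{old} = \{v_r, v_g, v_b\}$. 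Each branching vertex has valence $6$ and lies on all three cycles, with exactly one incoming and one outgoing edge of each color by the combinatorial-immersion property; the segment-length patterns on the three cycles are $(1, m, m)$, $(1, n, n)$, and $(1, p, p)$ respectively.

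First I would locate the branching vertices of $F$. Since $X_0$ has a single vertex, $V(F) = V(\overline X_{\sfrac{1}{4}})^2$, and the valence of $(u, v) \in V(F)$ equals twice the number of colors common to the colors at $u$ and at $v$. In particular, any vertex of $F$ with a coordinate in $V_{new}$ has valence at most $2$, so branching vertices of $F$ lie in $V_{old} \times V_{old}$. The three diagonal vertices sit in the trivial component, so the candidates for branching vertices of non-trivial components are the six off-diagonal pairs $(v_i, v_j)$ with $i \neq j$, each of valence $6$.

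Next I would analyze the color-$c$ subgraph of $F$ for each $c \in \{r, g, b\}$, and stitch the results. A length-$\ell$ simple monochrome cycle in $\overline X_{\sfrac{1}{4}}$ pulls back along itself in $F$ as $\ell$ disjoint cycles of length $\ell$, indexed by offsets $k \in \Z/\ell$; offset $0$ is the diagonal. For each non-zero $k$, the corresponding cycle passes through a non-diagonal pair $(v_i, v_j)$ exactly when $k$ equals the offset from the position of $v_i$ to the position of $v_j$ on the color-$c$ cycle. A direct enumeration using the segment-length patterns determines, for each color, which ordered pairs $(v_i, v_j)$ occur on which offset cycle and the interior lengths of the arcs between them. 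Stitching these monochrome arcs at the six non-diagonal branching vertices---where each color contributes two oriented edges, reproducing the local valence-$6$ picture of a vertex in $V_{old}$---produces the non-trivial portion of $F$: some isolated monochrome cycles (for offsets that miss every branching pair), together with a larger component encompassing the six branching vertices. Comparing the resulting arcs and lengths against Figure~\ref{fig:fiber product} then exhibits every non-trivial connected component of $F$ as a subgraph of the depicted graph, yielding the claim.

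The main obstacle is the bookkeeping: keeping offsets, orientations, and segment lengths coherent across the three colors so that the stitched graph matches Figure~\ref{fig:fiber product} without missed or extra arcs. Orientation care is especially important, because the fiber product pairs only edges mapping to the \emph{same} oriented edge of $X_0$, so the offset parameterization on each monochrome cycle must respect a coherent choice of direction on each of the red, green, and blue loops of $X_0$; once that choice is fixed, the enumeration of which offsets hit which branching pairs is a routine case analysis driven by the values $m, n, p \geq 2$.
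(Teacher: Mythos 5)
Your approach is the same as the paper's: a direct computation of the fiber product $F = \overline X_{\sfrac 14}\otimes_{X_0}\overline X_{\sfrac 14}$, using the fact that the three branching vertices $v_r, v_g, v_b$ of $\overline X_{\sfrac 14}$ cut the red, green, blue monochrome cycles into segments of lengths $(1,m,m)$, $(1,n,n)$, $(1,p,p)$, and that the only branching vertices of nontrivial components of $F$ are the six off-diagonal pairs in $V_{old}\times V_{old}$. Your counting of valences in $F$ and your reduction to offset bookkeeping on monochrome cycles are all correct, and this is exactly the computation the paper performs.

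However, your claimed outcome of the stitching is wrong: you assert ``a larger component encompassing the six branching vertices,'' but the six off-diagonal pairs do not all lie in one component. Tracking offsets carefully, the component of $(v_r,v_g)$ is connected via a red cycle to $(v_b,v_r)$ and via a green cycle to $(v_g,v_b)$, and via a blue cycle $(v_g,v_b)$ is connected back to $(v_b,v_r)$; the other three pairs $(v_g,v_r),(v_b,v_g),(v_r,v_b)$ form a second, disjoint component, the image of the first under the coordinate-swap involution of $F$. So the nontrivial branching part of $F$ consists of \emph{two} isomorphic components, each with three branching vertices, and Figure~\ref{fig:fiber product} depicts one of them (indeed the figure has only three branching vertices, not six, so as written your prediction would not match it). This is a bookkeeping slip rather than a flawed method, but it needs to be corrected for the argument to identify the correct target graph.
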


\begin{proof}
This proof is a direct computation of the fiber product of $F$.
Let $\{r_0,\dots, r_{2m}\}$, $\{g_0,\dots, g_{2n}\}$ and $\{b_0,\dots, b_{2p}\}$ 
be the sets of cyclically ordered (consistently with the orientation of the cycle) 
vertices in $\overline X_{\sfrac 1 4}$ of red, green and blue cycle respectively 
such that $v_r:=r_0 = g_0 = b_0$, $v_g:=r_{m} = g_{n} = b_1$ and $v_b:=r_{m+1} = g_{2n} = b_{p+1}$ 
are in $V_{old}$. 
The vertices $v_r, v_g, v_b$ come from collapsing a red, green, blue edge of $X_{\sfrac 14}$ respectively. %(see Figure~\ref{fig:mapCtoA}). 
They are respectively the top, the bottom right and the bottom left vertices in $\overline X_{\sfrac 14}$ in Figure~\ref{fig:mapCtoA}.
The connected component containing vertices $(v_r, v_g), (v_g,v_b), (v_b,v_r)$ 
is illustrated in Figure~\ref{fig:fiber product}. 
Another copy of that graph is the connected component containing $(v_r, v_b), (v_g, v_r), (v_b,v_g)$. 
All other nontrivial connected components do not have any branching vertices, 
and so are single monochrome cycles, or single vertices. 
\end{proof}

%\begin{lem}[All odd, some equal $3$]\label{lem:fiber all odd} 
%Suppose $(M,N,P) = (2m+1,2n+1,2p+1)$ and $M,N,P\geq 5$. 
%Then the intersection $C^g\cap C$ for $g\in A-C$ is either trivial, 
%or its conjugacy class is represented by a subgraph of the graph in Figure~\ref{fig:fiber product all odd 2}.
%\begin{figure}[h]\includegraphics[scale=0.3]{intersectionofconjugates}\caption{A non-trivial component of $F$, when $M,N,P$ are all odd.}\label{fig:fiber product}\end{figure}
%\end{lem}
%
%\begin{remark}
%The assumption in Lemma~\ref{lem:fiber all odd} that at least one of $M,N,P$ is not equal $3$ is necessary. 
%As mentioned in Example~\ref{exa:Art333}, if $M=N=P=3$, then $\overline X_{\sfrac 14}$ is a regular cover. 
%In that case all (three) connected components of $F$ 
%are copies of $\overline X_{\sfrac 14}$.
%\end{remark}

\begin{lem}[One even]\label{lem:fiber one even} 
If $(M,N,P) = (2m+1,2n+1,2p)$ where $m,n,p\geq 2$, then the intersection $C^g\cap C$ for $g\in A$ is either trivial, 
or its conjugacy class is represented by a subgraph of the graph in Figure~\ref{fig:fiber product two odd}.
\begin{figure}[h]\includegraphics[scale=0.3]{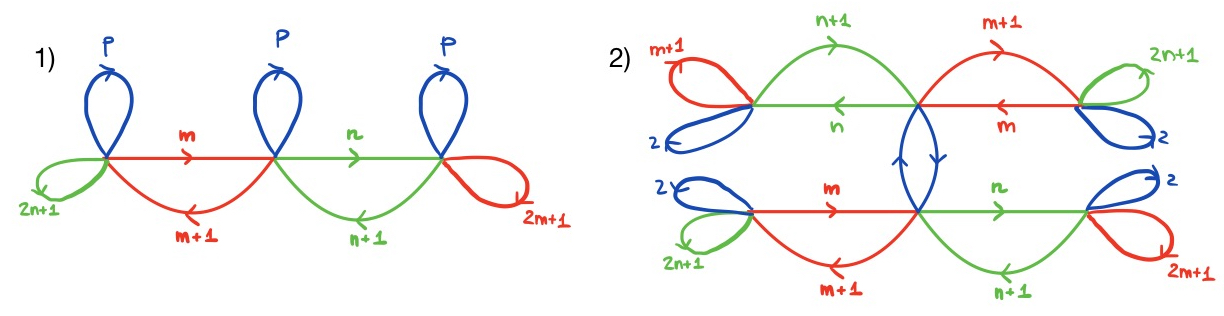}\caption{A non-trivial component of $F$, when $M,N$ are odd and $P$ is even. }\label{fig:fiber product two odd}\end{figure}
\end{lem}

\begin{proof}
We analyze the fiber product $F$ 
as in proof of Lemma~\ref{lem:fiber all odd}. 
Let $\{r_0,\dots, r_{2m}\}$ and $\{g_0,\dots, g_{2n}\}$ 
be the sets of cyclically ordered vertices of red and green cycle respectively, 
and and $\{b_0,\dots, b_{p-1}\}$ and $\{b_{p}\,\dots, b_{2p-1}\}$ 
be the sets of cyclically ordered vertices of the two blue cycles 
such that $v_r:=r_0 = g_0 = b_0$, $v_g:=r_{m} = g_{n} = b_1$ and $v_b:=r_{m+1} = g_{2n} = b_{p}$. 
As before the only branching vertices in $F$ are pairs of branching vertices of $\overline X_{\sfrac 14}$. 

If $p>2$, then $F$ has two connected components, one containing the vertices $(v_r,v_g), (v_g, v_b),(v_b,v_r)$ 
and one containing the vertices $(v_r, v_b), (v_g, v_r) ,(v_b,v_g)$. Each of them is a copy of the graph is illustrated in Figure~\ref{fig:fiber product two odd}(1). In the first case, the vertex $(v_r, v_g)$ is in the center.
All the connected components without branching vertices are simple monochrome cycles, or single vertices.

If $p=2$, then the vertices $(v_r, v_g)$ and $(v_g, v_r)$ are adjacent. In that case $F$ is connected and is illustrated in Figure~\ref{fig:fiber product two odd}(2).
\end{proof}

\begin{lem}[Two even]\label{lem:fiber two even} 
If $(M,N,P) = (2m+1,2n,2p)$ where $m,n,p\geq 2$, then the intersection $C^g\cap C$ for $g\in A$ is either trivial, 
or its conjugacy class is represented by a subgraph of the graph in Figure~\ref{fig:fiber product one odd}.
\begin{figure}[h]\includegraphics[scale=0.3]{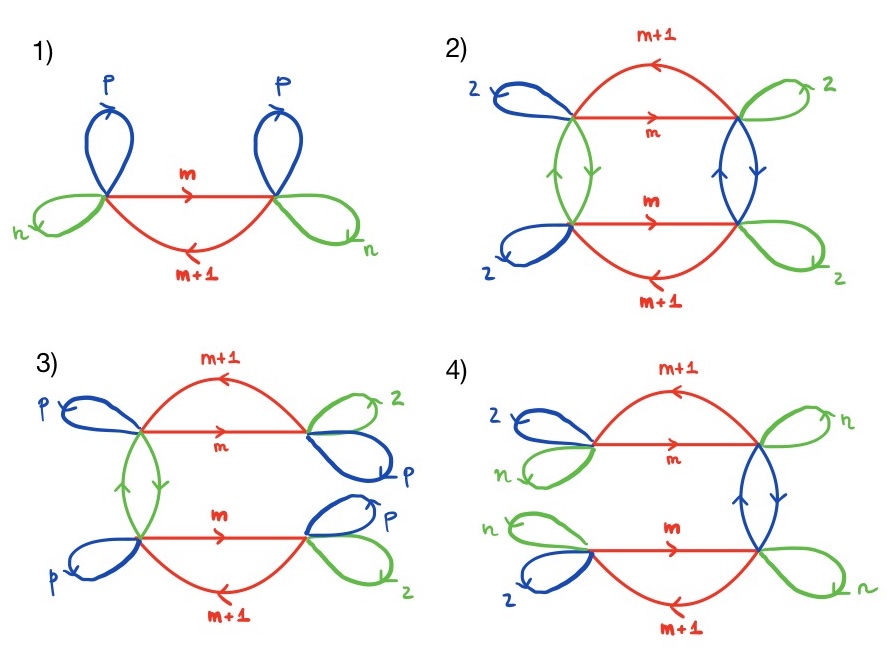}\caption{A non-trivial component of $F$, when $M$ is odd and $N,P$ are even.}\label{fig:fiber product one odd}\end{figure}
\end{lem}

\begin{proof}
As before, let $\{r_0,\dots, r_{2m}\}$, $\{g_0,\dots, g_{n-1}\}$, $\{g_n,\dots, g_{2n-1}\}$, 
$\{b_0,\dots, b_{p-1}\}$ and $\{b_{p}\,\dots, b_{2p-1}\}$ 
be the sets of cyclically ordered vertices of monochrome cycles such that
such that $v_r:=r_0 = g_0 = b_0$, $v_g:=r_{m} = g_{n} = b_1$ and $v_b:=r_{m+1} = g_{n-1} = b_{p}$. 
If $n,p>2$, then there is a connected components of $F$ containing a branching vertices $(v_b, v_r)$ and $(v_r, v_g)$, and distinct connected component containing the vertices $(v_r, v_b)$ and $(v_g, v_r)$. Each is a copy of the graph illustrated in Figure~\ref{fig:fiber product one odd}(1). 

If $n=p=2$, then there is one connected component of $F$ containing all fours branching vertices. It is illustrated in Figure~\ref{fig:fiber product one odd}(2). The cases where exactly one of $n,p$ is equal $2$ are illustrated in  Figure~\ref{fig:fiber product one odd}(3) and \ref{fig:fiber product one odd}(4).

All other components are simple monochrome cycles, or single vertices.
\end{proof}

\begin{lem}[All even]\label{lem:fiber all even} 
If $(M,N,P) = (2m,2n,2p)$ where $m,n,p\geq 3$, then the intersection $C^g\cap C$ for $g\in A$ is either trivial, 
or its conjugacy class is represented by a subgraph of the graph in Figure~\ref{fig:fiber product all even}.
\begin{figure}[h]\includegraphics[scale=0.3]{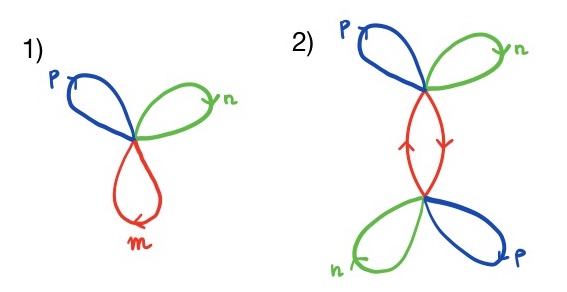}\caption{A non-trivial component of $F$, when $M,N,P$ are all even.}\label{fig:fiber product all even}\end{figure}
\end{lem}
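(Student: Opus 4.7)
The proof adapts the strategy of Lemmas~\ref{lem:fiber all odd}--\ref{lem:fiber two even}: I compute the fiber product $F := \overline X_{\sfrac 1 4}\otimes_{X_0}\overline X_{\sfrac 1 4}$ and classify its connected components, using the general facts that segments of $F$ project to monochrome segments of $\overline X_{\sfrac 1 4}$ of the same length, and that branching vertices of $F$ lie in $V_{old}\times V_{old}$.

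By Description~\ref{lem:collapsing an edge}, for each edge $\{a,b\}$ of $\Gamma$ with $M_{ab}=2m$, the subgraph $\overline X(a,b)_{\sfrac 1 4}$ is a disjoint union of two monochrome cycles of length $m$: a \emph{loop} based at the single identified $V_{old}$-vertex $v_{ab}=\{a_+,b_-\}$, arising from the subdivision of the $E''$-edge whose endpoints collapse together; and a \emph{through-cycle} passing through the other two $V_{old}$-vertices, split by them into arcs of lengths $1$ and $m-1$. With the cyclic orientation on the triangle, $\overline X_{\sfrac 1 4}$ is therefore a union of six monochrome cycles (two of each color) meeting only at the three $V_{old}$-vertices $v_{ab},v_{bc},v_{ca}$, each of valence six with exactly one incoming and one outgoing edge of each color.

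Among the nine pairs in $V_{old}\times V_{old}$, the three diagonal pairs sit in the trivial component; the six off-diagonal pairs decompose into two orbits of size three under the cyclic symmetry of $\Gamma$, interchanged by the swap automorphism $(x,y)\mapsto(y,x)$ of $F$. For one orbit I would fix the representative $(v_{ab},v_{bc})$, trace the outgoing monochrome segments of each color in $F$, and show that each such segment returns to another pair in the same orbit after a number of steps determined by the lengths $m,n,p$ and by the $1$ versus $m-1$ splittings of the through-cycles; assembling the three pairs of the orbit with these arcs yields exactly the graph of Figure~\ref{fig:fiber product all even}, and the other orbit produces a second, isomorphic copy. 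All remaining components of $F$ contain no branching vertex and are therefore simple monochrome cycles or isolated vertices, each of which is a subgraph of the figure.

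The principal obstacle is the arc-length bookkeeping: tracing out of $(v_{ab},v_{bc})$ along red, the first coordinate runs around the red loop at $v_{ab}$ while the second runs along the asymmetrically split red through-cycle at $v_{bc}$, and one must verify for each of the three colors at each of the three pairs in an orbit that the two coordinates synchronize at a branching pair lying in the same orbit rather than in the opposite orbit or on a diagonal. The hypothesis $m,n,p\geq 3$ is what rules out the collisions that appeared in Lemma~\ref{lem:fiber two even} when $n=2$ or $p=2$: once every monochrome cycle has length at least three, the two orbits cannot fuse and Figure~\ref{fig:fiber product all even} captures every nontrivial branching component canonically.
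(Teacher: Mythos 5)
Your structural picture of the nontrivial components is wrong, and the error comes from modeling the all-even case on the all-odd case (Lemma~\ref{lem:fiber all odd}), where it does not carry over. You claim each monochrome segment of $F$ issuing from a pair like $(v_{ab},v_{bc})$ ``returns to another pair in the same orbit,'' so that the three pairs in an orbit assemble into a single component with three branching vertices. In fact, when $m,n,p\geq 3$ every such segment closes up \emph{at the same pair it started from}, and each of the six off-diagonal pairs lies in its own component, which has exactly one branching vertex; the figure is a single vertex with three monochrome cycles (of lengths $m,n,p$) wedged on. Here is why your synchronization claim fails. In the all-even case each colour of $\overline X_{\sfrac 14}$ has \emph{two} cycles with \emph{disjoint} $V_{old}$-vertex sets: the loop meets only $v_{ab}$, and the through-cycle meets only $v_{bc},v_{ca}$. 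Consider the red segment out of $(v_{ab},v_{bc})$: the first coordinate lives in the loop (length $m$, branching only at local position $0$), the second in the through-cycle (length $m$, branching at local positions $0$ and $1$); the two local positions advance in lockstep mod $m$, so the unique common solution with both coordinates branching is position $0$, i.e.\ $(v_{ab},v_{bc})$ again. The same happens for green and blue and for every starting pair. Contrast this with the all-odd case, where each colour is a \emph{single} cycle of length $2m+1$ hitting all three $V_{old}$-vertices at residues $0,m,m+1 \pmod{2m+1}$; there the synchronization equation has a solution at a \emph{different} branching pair, which is what produces the three-branching-vertex component of Lemma~\ref{lem:fiber all odd}. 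Your remark about $m,n,p\geq 3$ preventing fusion is correct as far as it goes (when some $m=2$, the through-cycle's two arcs both have length $1$ and a segment from $(v_{bc},v_{ca})$ can land on $(v_{ca},v_{bc})$), but even granting this, the two orbits do not form two connected components each; they disintegrate into six single-branching-vertex components.
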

\begin{proof}
As before, let $\{r_0,\dots, r_{m-1}\}$, $\{r_m,\dots, r_{2m-1}\}$, $\{g_0,\dots, g_{n-1}\}$, 
$\{g_n,\dots, g_{2n-1}\}$, $\{b_0,\dots, b_{p-1}\}$ and $\{b_{p}\,\dots, b_{2p-1}\}$ 
be the sets of cyclically ordered vertices of monochrome cycles such that
such that $v_r:=r_0 = g_0 = b_0$, $v_g:=r_{m} = g_{n} = b_1$ and $v_b:=r_{m+1} = g_{n-1} = b_{p}$. 
If $m,n,p>2$, then each connected components of $F$ contains at most one branching vertex. Any such connected component is a copy of the graph illustrated in Figure~\ref{fig:fiber product all even}(1). 
Otherwise, a connected component of $F$ contains at most two branching vertices. If $m = 2$, then the connected component containing $(v_g, v_b)$ also contains $(v_b, v_g)$ but no other branching vertices. Such connected component is illustrated in Figure~\ref{fig:fiber product all even}{2}. There are analogous graphs for for $n=2$, $v_r, v_b$ and for $p=2$, $v_r, v_g$. 
All other components are simple monochrome cycles, or single vertices.
\end{proof}

\begin{remark}\label{rem:all monochrome}If at least one of $M,N,P$ is even and $(M,N,P)\neq (2m+1, 4,4)$ (for any permutation), 
then all the simple cycles in the fiber product of $F$ are monochrome. It follows immediately from Lemmas~\ref{lem:fiber one even}, \ref{lem:fiber two even}, \ref{lem:fiber all even}.
\end{remark}

%%%%%%%%%%%%%%%%%%%%%%%%%%
\subsection{At least one even exponent}\label{sec:at least one even}
%%%%%%%%%%%%%%%%%%%%%%%%%%
We now will apply Theorem~\ref{thm:conditions for rf} to the twisted double $D(A,C,\beta)$ 
that is an index two subgroup of $\Art_{MNP}$ in Corollary~\ref{cor:splitting of 3 gen}. 
In this section we consider the case where at least one of $M,N,P$ is even.
Let $\mathcal A_{\rho}$ be the oppressive set of $C$ in $A$ with respect to $\rho:\overline X_{\sfrac 14}\to X_0$.

\begin{prop}\label{prop:triangle group quotient even}
Suppose $M,N,P\geq 4$ and at least one of $M,N,P$ is even. Suppose that $(M,N,P)\neq (2m+1, 4,4)$ (for any permutation). There exists a quotient $\phi:A\to \bar A$ such that 
\begin{enumerate}
\item $\bar A$ is virtually free,
\item $\bar C=\phi(C)$ is free and is malnormal in $\bar A$,
\item $\phi$ separates $C$ from $\mathcal A_{\rho}$,
\item $\beta:C\to C$ projects to an automorphism $\bar\beta:\bar C\to\bar C$. 
\end{enumerate}
\end{prop}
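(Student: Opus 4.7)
The plan is to take $\bar A$ to be a free product of three finite cyclic groups obtained by killing a specific power of each generator of $A$. Let $\alpha$ be the common length of a simple red monochrome cycle of $\overline X_{\sfrac{1}{4}}$, namely $\alpha = M$ when $M$ is odd and $\alpha = M/2$ when $M$ is even, and define $\beta, \gamma$ analogously from $N$ and $P$. Let $X_{\bullet}$ be the $2$-complex obtained from $X_0$ by attaching three $2$-disks along $x^{\alpha}, y^{\beta}$, and $z^{\gamma}$, and let $\phi : A \to \bar A := \pi_1 X_{\bullet} = \Z/\alpha * \Z/\beta * \Z/\gamma$ be the resulting quotient. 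Since $M,N,P \geq 4$ forces $\alpha,\beta,\gamma \geq 2$, the group $\bar A$ is virtually free and therefore virtually special hyperbolic, verifying (1). The exponents are chosen precisely so that every simple monochrome cycle of $\overline X_{\sfrac{1}{4}}$ maps to an attaching curve of one of the three disks of $X_{\bullet}$.

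Let $Y_{\bullet}$ be the $2$-complex with $1$-skeleton $\overline X_{\sfrac{1}{4}}$ obtained by attaching a $2$-disk along every simple monochrome cycle; the map $\rho : \overline X_{\sfrac{1}{4}} \to X_0$ then extends to a cellular map $\rho_{\bullet} : Y_{\bullet} \to X_{\bullet}$ sending each disk of $Y_{\bullet}$ to the corresponding disk of $X_{\bullet}$. Since each attached disk is contractible, $Y_{\bullet}$ deformation retracts onto the graph obtained from $\overline X_{\sfrac{1}{4}}$ by collapsing each simple monochrome cycle to a point, so $\pi_1 Y_{\bullet}$ is free. For (4), recall from Remark~\ref{rem:Artin twisted double} that $\beta$ is induced by the non-trivial deck transformation of the double cover $\overline X_{\sfrac{1}{4}} \to X_{\sfrac{1}{2}}$; this graph automorphism permutes the simple monochrome cycles among themselves (pairing cycles of equal color and length), so it extends to a cellular automorphism of $Y_{\bullet}$, and Observation~\ref{obs:beta bar} projects $\beta$ to an automorphism of $\pi_1 Y_{\bullet}$, which I will shortly identify with $\bar C$.

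The remaining conditions (2) and (3) both reduce to showing that the lift $\widetilde{\rho_{\bullet}} : \widetilde{Y_{\bullet}} \to \widetilde{X_{\bullet}}$ to universal covers is an embedding. The equality $\phi(\pi_1 \overline X_{\sfrac{1}{4}}) = (\rho_{\bullet})_*(\pi_1 Y_{\bullet})$ required by Lemma~\ref{lem:embedded universal covers} is immediate from the construction, so once the embedding is in hand, Lemma~\ref{lem:embedded universal covers} gives (3), and the same embedding identifies $\bar C \cong \pi_1 Y_{\bullet}$, yielding freeness of $\bar C$. Malnormality is extracted similarly: any non-trivial intersection $\bar C \cap \bar C^{\bar g}$ with $\bar g \notin \bar C$ would correspond to a non-trivial component of the fiber product of $\widetilde{Y_{\bullet}}$ with itself over $\widetilde{X_{\bullet}}$, whose image in $X_0$ lies inside a non-trivial component of the combinatorial fiber product $\overline X_{\sfrac{1}{4}} \otimes_{X_0} \overline X_{\sfrac{1}{4}}$. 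By Remark~\ref{rem:all monochrome}, under the hypothesis that at least one exponent is even and $(M,N,P)\neq(2m+1,4,4)$, every simple cycle in such a component is monochrome and hence bounds a disk in $Y_{\bullet}$, forcing the intersection to be trivial. The principal obstacle is the $2$-dimensional embedding itself: $\widetilde{X_{\bullet}}$ is the Cayley complex of $\bar A$, not a tree, so one must carefully verify that every loop traversed in $\widetilde{X_{\bullet}}$ by a lift from $Y_{\bullet}$ is either already closed in $\widetilde{Y_{\bullet}}$ or is bounded by an attached $2$-disk whose preimage lies in $\widetilde{Y_{\bullet}}$, with the monochrome-cycle analysis of Remark~\ref{rem:all monochrome} being the key input guaranteeing this dichotomy.
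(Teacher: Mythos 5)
Your construction of $\bar A$, $X_\bullet$, $Y_\bullet$ and the overall three-step strategy (freeness of $\bar C$ via the homotopy type of $Y_\bullet$, separation via Lemma~\ref{lem:embedded universal covers}, malnormality via Remark~\ref{rem:all monochrome}) is the same route the paper takes. However, there are two concrete gaps.

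First, your argument for condition (4) does not work as written. The deck transformation is an automorphism of $X_{\sfrac{1}{4}}$ (as a double cover of $X_{\sfrac{1}{2}}$), not of $\overline X_{\sfrac{1}{4}}$, and it does not descend to $\overline X_{\sfrac{1}{4}}$: it swaps the two edges $\{a_+,b_-\}$ and $\{a_-,b_+\}$ of $E'$, of which exactly one is collapsed in $\overline X_{\sfrac{1}{4}}$ when $M_{ab}\geq 3$, and it swaps the two $E''$-edges that are subdivided into paths of different lengths ($m$ versus $m-1$) when $M_{ab}=2m$. So there is no induced graph automorphism of $\overline X_{\sfrac{1}{4}}$, and a fortiori no cellular automorphism of your $Y_\bullet$. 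The paper circumvents this by pulling the $2$-cells of $Y_\bullet$ back along the homotopy equivalence $X_{\sfrac{1}{4}}\to\overline X_{\sfrac{1}{4}}$ to a $2$-complex with $1$-skeleton $X_{\sfrac{1}{4}}$; the $\pi$-rotation is defined there and visibly permutes the pulled-back $2$-cells, so Observation~\ref{obs:beta bar} applies. You should do the same.

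Second, your sketch of why $\widetilde{\rho_\bullet}\colon\widetilde{Y_\bullet}\to\widetilde{X_\bullet}$ is an embedding invokes the wrong input: Remark~\ref{rem:all monochrome} concerns the fiber product $\overline X_{\sfrac{1}{4}}\otimes_{X_0}\overline X_{\sfrac{1}{4}}$ and is what drives the malnormality of $\bar C$, not the embedding. The embedding is a statement about $\overline X_{\sfrac{1}{4}}\to X_0$ alone. The actual mechanism (made explicit in the proof of Theorem~\ref{thm:generalization at least one even}) is that $\widetilde{X_\bullet}$ can be homotoped to a tree by replacing each lift of a monochrome $2$-cell by a star graph, under which $\widetilde{Y_\bullet}$ becomes a subgraph mapped in by a local isometric embedding; one then applies \cite[Proposition II.4.14]{BridsonHaefliger}. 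Your ``dichotomy'' framing of loops either closing in $\widetilde{Y_\bullet}$ or bounding a disk whose preimage lies in $\widetilde{Y_\bullet}$ does not by itself rule out two distinct vertices of $\widetilde{Y_\bullet}$ landing on the same vertex of $\widetilde{X_\bullet}$, which is precisely what an embedding must exclude.
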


\begin{proof}
For each number $k$ define 
\[
\bar k = \left\{\begin{array}{cc}
\frac k 2& \text{if }k\text{ is even,}\\
k & \text{if } k\text{ is odd.}\end{array}\right.
\]
Let 
\[\bar A = \langle x,y,z \mid x^{\bar M}, y^{\bar N}, z^{\bar P}\rangle = \Z/\bar M\Z*\Z/\bar N\Z*\Z/\bar P\Z,\]
and let $\phi:A\to\bar A$ be the natural quotient. 
As a free product of finite groups $\bar A$ is virtually free.
Geometrically, we obtain $\bar A$ as the fundamental group of a $2$-complex $X_{\bullet}$ 
obtained from the bouquet of circles $X_0$ by attaching $2$-cells along $x^{\bar M}, y^{\bar N}$ and $z^{\bar P}$. 
Let $Y_{\bullet}$ be a $2$-complex obtained from $\overline X_{\sfrac1 4}$ by attaching a $2$-cell along each of the simple monochrome cycles with labels $x^{\bar M}, y^{\bar N}$ and $z^{\bar P}$. 
%Indeed, every cycle of $Y_{\bullet}$ that corresponds to a trivial element of $\bar C$ is mapped to a concatenation of monochrome cycles in $X_{\bullet}$.
The complex $Y_{\bullet}$ has the homotopy type of a graph (see Figure~\ref{fig:mapCtoA}), so $\pi_1 Y_{\bullet}$ is a free group.
There is an induced map $\rho_{\bullet}:Y_{\bullet}\to X_{\bullet}$ which lifts to an embedding $\widetilde Y_{\bullet}\to \widetilde X_{\bullet}$ of the universal covers. 
We have $\pi_1 Y_{\bullet} = \bar C$.
By Lemma~\ref{lem:embedded universal covers}, $\phi$ separates $C$ from $\mathcal A_{\rho}$.
%Since $$\chi(Y ) = 3-9+6-\#\{\text{odd numbers among }M,N,P\}$$ (see Figure~\ref{fig:mapCtoA}),
%the group $\bar C$ is a free group with $\rank \bar C = 1+\#\{\text{odd numbers among }M,N,P\}$.
%

In Lemma~\ref{lem:fiber one even}, Lemma~\ref{lem:fiber two even}  and Lemma~\ref{lem:fiber all even}, 
we computed the graphs representing the intersections $C\cap C^g$ for $g\in A$. 
The intersections of $\bar C\cap \bar C^{\bar g}$ for $\bar g\in \bar A$ can be represented by the graphs obtained in those lemmas with $2$-cells added along simple monochrome cycles with labels $x^{\bar M}, y^{\bar N}$ and $z^{\bar P}$. 
The graphs become contractible after attaching $2$-cells to the simple monochrome cycles (see Remark~\ref{rem:all monochrome}). 
It follows that $\bar C$ is malnormal in $\bar A$.

The $2$-cells of $Y_{\bullet}$ can be pulled back along the homotopy equivalence $ X_{\sfrac1 4}\to\overline X_{\sfrac1 4}$. 
See Figure~\ref{fig:2cellspulledback}. 
\begin{figure}[h]\includegraphics[scale=0.25]{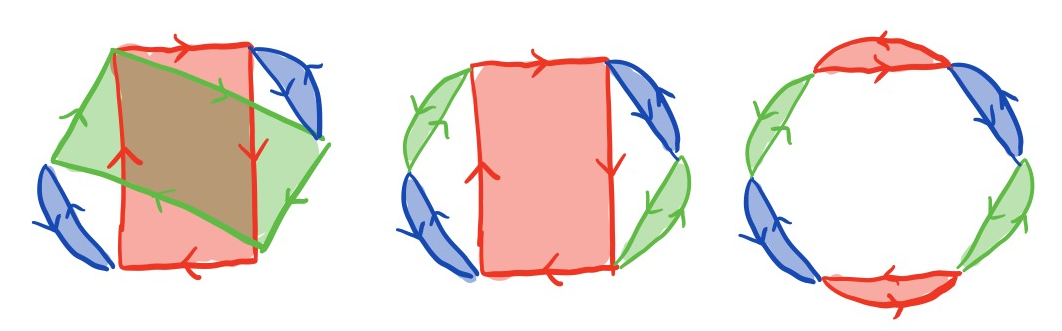}\caption{The $2$-cells in the presentation complex of $\bar A$ can be pulled back to $X_{\sfrac 14}$. These are three cases where at least one of $M,N,P$ is even. These new $2$-complexes admit a rotation by $\pi$ which represents the automorphism $\beta$.}\label{fig:2cellspulledback}\end{figure}
The pulled back $2$-cells in Figure~\ref{fig:2cellspulledback} have boundary cycles 
that are denoted by the same colors as the corresponding boundary cycles 
of the corresponding $2$-cells in $\overline X_{\sfrac 14}$.
By Observation~\ref{obs:beta bar}, $\beta$ projects to an automorphism $\bar\beta:\bar C\to \bar C$. 
\end{proof}

Since free groups are locally quasiconvex, $\bar C$ is quasiconvex in $\bar A$. By combining Proposition~\ref{prop:triangle group quotient even} with Theorem~\ref{thm:conditions for rf} we have the following.
\begin{cor}\label{cor:at least one even}
If at least one $M,N,P$ is even and $(M,N,P)\neq (2m+1, 4,4)$ (for any permutation), then $\Art_{MNP}$ splits as an algebraically clean graph of finite rank free groups. In particular, $\Art_{MNP}$ is residually finite.
\end{cor}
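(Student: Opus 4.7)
The plan is to combine the just-established Proposition~\ref{prop:triangle group quotient even} with Theorem~\ref{thm:conditions for rf}, applied to the twisted double $D(A,C,\beta)$ that sits as an index-two subgroup of $\Art_{MNP}$ by Corollary~\ref{cor:splitting of 3 gen} and Remark~\ref{rem:Artin twisted double}. Here $A,C$ are finite-rank free groups coming from the splitting $\Art_{MNP}=A\ast_C B$, and $\beta\in\Aut(C)$ is induced by the nontrivial deck transformation of the double cover $X_{\sfrac14}\to X_{\sfrac12}$.

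First I would invoke Proposition~\ref{prop:triangle group quotient even} to produce a quotient $\phi:A\to\bar A$ satisfying the malnormality of $\bar C=\phi(C)$ in $\bar A$, the separation of $C$ from the oppressive set $\mathcal A_\rho$, the descent of $\beta$ to an automorphism $\bar\beta$ of $\bar C$, and the condition that $\bar A$ is virtually free. To invoke Theorem~\ref{thm:conditions for rf} I still need the remaining pieces of hypotheses (1) and (2): that $\bar A$ is virtually special and hyperbolic, and that $\bar C$ is quasiconvex in $\bar A$. The first is automatic since finite rank free groups are special and hyperbolic. The second is automatic as well because virtually free groups are locally quasiconvex (finite-rank free groups are, and quasiconvexity is preserved under finite-index overgroups), and $\bar C$ is finitely generated.

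Having verified all four conditions of Theorem~\ref{thm:conditions for rf}, the conclusion of that theorem asserts that $D(A,C,\beta)$ virtually splits as an algebraically clean graph of finite-rank free groups and is residually finite. Since $D(A,C,\beta)$ has index two in $\Art_{MNP}$, both properties pass to $\Art_{MNP}$: virtual algebraic cleanness is preserved under finite-index overgroups, and residual finiteness is inherited from any residually finite finite-index subgroup.

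This argument is essentially mechanical at this stage; the substantive work is already concentrated in Proposition~\ref{prop:triangle group quotient even}, relying on Lemmas~\ref{lem:fiber one even}, \ref{lem:fiber two even}, and \ref{lem:fiber all even} to control the fiber product $\overline X_{\sfrac14}\otimes_{X_0}\overline X_{\sfrac14}$, and on Lemma~\ref{lem:embedded universal covers} to promote the embedding of universal covers into separation from $\mathcal A_\rho$. The hypothesis $(M,N,P)\neq(2m+1,4,4)$ enters only through the observation of Remark~\ref{rem:all monochrome}: it is precisely the condition that forces every simple cycle of the fiber product to be monochrome, so that killing the monochrome relators yields both contractible fiber-product components (hence malnormality of $\bar C$) and an injective lift on universal covers (hence separation). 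The only place a reader might pause is in confirming that ``virtually special hyperbolic'' in hypothesis (1) of Theorem~\ref{thm:conditions for rf} genuinely covers the virtually free case, but this is immediate.
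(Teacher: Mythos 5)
Your proposal is correct and follows essentially the same route as the paper: combine Proposition~\ref{prop:triangle group quotient even} with Theorem~\ref{thm:conditions for rf} via the index-two twisted double $D(A,C,\beta)$, checking the leftover hypotheses (that $\bar A$ is virtually special hyperbolic, which is automatic for virtually free groups, and that $\bar C$ is quasiconvex, which follows from local quasiconvexity of virtually free groups) before passing conclusions back up to $\Art_{MNP}$. The paper's own proof is just a compressed version of exactly these steps.
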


%%%%%%%%%%%%%%%%%%%%%%%%%%
\subsection{All exponents odd}\label{sec:all exponents odd}
%%%%%%%%%%%%%%%%%%%%%%%%%%
We will now apply Theorem~\ref{thm:conditions for rf} in the case where $M,N,P$ are all odd.
Again, let $\mathcal A_{\rho}$ be the oppressive set of $C$ in $A$ with respect to ${\rho}:\overline X_{\sfrac 14}\to X_0$.
The main goal of this section is the following.
\begin{prop}\label{prop:triangle group quotient}
Suppose $(M,N,P) = (2m+1, 2n+1, 2p+1)$ where $m,n,p\geq 2$. There exists a quotient $\phi:A\to \bar A$ such that 
\begin{enumerate}
\item $\bar A$ is a hyperbolic von Dyck group,
\item $\bar C:=\phi(C)$ is a free group of rank $2$ and is malnormal in $\bar A$,
\item $\phi$ separates $C$ from $\mathcal A_{\rho}$,
\item $\beta:C\to C$ projects to an automorphism $\bar\beta:\bar C\to\bar C$.
\end{enumerate}
\end{prop}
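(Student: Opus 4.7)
The plan is to adapt the argument of Proposition~\ref{prop:triangle group quotient even}, replacing the free product of cyclic groups by a hyperbolic von Dyck group. Take
\[
\bar A \;=\; \langle x,y,z \mid x^M,\, y^N,\, z^P,\, xyz\rangle,
\]
the orientation-preserving index-$2$ subgroup of the $(M,N,P)$-triangle reflection group. Since $m,n,p\geq 2$ forces $M,N,P\geq 5$, we have $\frac{1}{M}+\frac{1}{N}+\frac{1}{P}\leq \frac{3}{5}<1$, so $\bar A$ acts properly and cocompactly by orientation-preserving isometries on $\Hyp$ with fundamental domain a triangle of angles $\pi/M,\pi/N,\pi/P$; in particular $\bar A$ is word-hyperbolic and virtually a closed surface group, which gives condition~(1). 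Geometrically, I realize $\bar A=\pi_1 X_\bullet$ where $X_\bullet$ is obtained from $X_0$ by attaching four $2$-cells along the loops $x^M,\, y^N,\, z^P,\, xyz$, and take $\phi:A\to \bar A$ to be the induced quotient.

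Next, I would construct a $2$-complex $Y_\bullet'$ by attaching $2$-cells to $X_{\sfrac{1}{4}}$ in a way that is invariant under the $\pi$-rotation inducing $\beta$. Using Lemma~\ref{lem:fiber all odd}, the three simple monochrome cycles of $\overline X_{\sfrac{1}{4}}$ (lifts of $x^M, y^N, z^P$) pass through the three branching vertices $v_r, v_g, v_b$, and the three length-$1$ arcs between these vertices form a triangle $T$ whose boundary reads $xyz$ up to cyclic shift. Pulled back along the homotopy equivalence $X_{\sfrac{1}{4}}\to \overline X_{\sfrac{1}{4}}$, these cycles lift to loops in $X_{\sfrac{1}{4}}$; the $\pi$-rotation permutes them (in particular, it pairs the pullback of $T$ with a second $xyz$-loop). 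I attach $2$-cells to $X_{\sfrac{1}{4}}$ along each of these rotation-invariant cycles, obtaining $Y_\bullet'$, and the corresponding complex $Y_\bullet$ over $\overline X_{\sfrac{1}{4}}$ is homotopy-equivalent to $Y_\bullet'$ (in the spirit of Figure~\ref{fig:2cellspulledback}). The natural map $\rho_\bullet:Y_\bullet\to X_\bullet$ sends each added $2$-cell of $Y_\bullet$ to the corresponding $2$-cell of $X_\bullet$, so $(\rho_\bullet)_*(\pi_1 Y_\bullet)=\phi(\pi_1 \overline X_{\sfrac{1}{4}})=\bar C$. An Euler characteristic count, together with a verification that $Y_\bullet$ has the homotopy type of a pair of pants, will show $\pi_1 Y_\bullet \cong F_2$, giving the rank-$2$ half of~(2).

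For condition~(3), I would apply Lemma~\ref{lem:embedded universal covers}: it suffices to verify that the lift $\widetilde\rho_\bullet:\widetilde Y_\bullet\to \widetilde X_\bullet$ is an embedding. The universal cover $\widetilde X_\bullet$ is the standard tessellation of $\Hyp$ by $(M,N,P)$-triangles, and $\widetilde Y_\bullet$ embeds as an explicit convex subregion; the embedding is checked by a local link argument at each vertex of $Y_\bullet$ combined with the CAT($0$) geometry of $\widetilde X_\bullet$. Malnormality in~(2) uses the fiber product $F = \overline X_{\sfrac{1}{4}}\otimes_{X_0}\overline X_{\sfrac{1}{4}}$ described in Lemma~\ref{lem:fiber all odd} and Figure~\ref{fig:fiber product}: attaching the pulled-back cells on both factors yields $Y_\bullet\otimes_{X_\bullet}Y_\bullet$, and one shows that every nontrivial component becomes simply connected, so $\bar C\cap \bar C^{\bar g} = \{1\}$ for all $\bar g\in \bar A - \bar C$. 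Quasiconvexity of $\bar C$ then follows because every finitely generated subgroup of a closed hyperbolic surface group is quasiconvex, hence the same holds in the virtually surface group $\bar A$. Finally, condition~(4) follows from Observation~\ref{obs:beta bar}: by construction the $\pi$-rotation on $X_{\sfrac{1}{4}}$ permutes the $2$-cells of $Y_\bullet'$, so it extends to a self-map of $Y_\bullet'$, and hence $\beta$ projects to $\bar\beta\in\Aut(\bar C)$.

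The main obstacle will be verifying condition~(3), the embedding $\widetilde\rho_\bullet: \widetilde Y_\bullet \hookrightarrow \widetilde X_\bullet$. This requires an explicit identification of $\widetilde Y_\bullet$ inside the hyperbolic triangle tessellation and a careful argument ruling out unintended identifications of vertex lifts. Closely intertwined is the malnormality check in~(2): showing that the hexagonal components of $F$ in Figure~\ref{fig:fiber product} become simply connected after attaching all pulled-back cells relies on the same geometric understanding. A secondary challenge is pinning down the precise collection of auxiliary $xyz$-cells to attach so that $\pi_1 Y_\bullet$ equals exactly $F_2$ rather than some larger or smaller free group.
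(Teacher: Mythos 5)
Your choice of relator is the critical divergence, and it introduces a genuine gap. The paper takes
\[
\bar A = \langle x,y,z \mid x^M, y^N, z^P,\, x^m y^n z^p\rangle,
\]
not $\langle x,y,z \mid x^M, y^N, z^P, xyz\rangle$. Both presentations define groups abstractly isomorphic to the von Dyck group $D(M,N,P)$ (the paper shows this via the change of variables $a=x^m$, $b=y^n$, $c=z^p$, using the arithmetic identity $m(M-2)=M(m-1)+1$), but as \emph{quotients of $A=F(x,y,z)$} they have different kernels, and in particular $xyz\neq 1$ in the paper's $\bar A$. So you are working with a genuinely different map $\phi$, and every downstream claim about $\bar C=\phi(C)$ must be re-established rather than transported.

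Concretely, two things break. First, your assertion that the $\pi$-rotation ``pairs the pullback of $T$ with a second $xyz$-loop'' is false. The triangle $T$ (the three length-$1$ segments of $\overline X_{\sfrac14}$) is the unique simple cycle in $\overline X_{\sfrac14}$ whose image in $X_0$ reads $xyz$, and its preimage in $X_{\sfrac14}$ is a single hexagon
$a_+\to c_-\to b_+\to a_-\to c_+\to b_-\to a_+$
(alternating $E'$-edges and collapsed edges). The rotation sends this hexagon to itself, acting as the antipodal map; it does not produce a second loop. Second, this means your $Y_\bullet$ carries only four $2$-cells (three monochrome plus one hexagon/triangle), so $\chi(Y_\bullet)=-6+4=-2$ and $\pi_1 Y_\bullet$ has rank $3$, not $2$. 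Your Euler-characteristic argument for $\bar C\cong F_2$ therefore cannot work, and since Lemma~\ref{lem:embedded universal covers} requires the lift $\widetilde Y_\bullet\to\widetilde X_\bullet$ to be an embedding (hence $\pi_1Y_\bullet\hookrightarrow\bar A$), you cannot even fall back on a proper quotient of $\pi_1Y_\bullet$ to get rank $2$.

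By contrast, the relator $x^m y^n z^p$ is tailored to the combinatorics: there are exactly \emph{two} simple cycles in $\overline X_{\sfrac14}$ reading $x^m y^n z^p$ (the one through the long red segment $v_{ab}\to v_{bc}$ and the one through the other long red segment $v_{ca}\to v_{ab}$), and these pull back to the two triangles $a_+b_+c_+$ and $a_-b_-c_-$ in $X_{\sfrac14}$, which are exchanged by the $\pi$-rotation (as in Figure~\ref{fig:2cellspulledbackodd}). This gives five $2$-cells, $\chi(Y_\bullet)=-1$, and $\pi_1 Y_\bullet\cong F_2$. Moreover, the change of variables $a=x^m$, $b=y^n$, $c=z^p$ converts each length-$m$ (resp.\ $n$, $p$) segment of $\overline X_{\sfrac14}$ into a single edge labelled $a$ (resp.\ $b$, $c$), producing the auxiliary graph $Y'$ over the standard von Dyck presentation complex $X'_\bullet$; the local-isometric-embedding argument in CAT($0$) (via \cite[Prop.~II.4.14]{BridsonHaefliger}) and the ping-pong argument showing $\bar C=\langle x^m y^{-n}, z^{-p}x^m\rangle\cong F_2$ both rely on this matching. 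If you insist on the relator $xyz$, you would need to independently verify conditions (2) and (3) for the resulting (different) $\bar C$, and the rank claim as stated is incorrect.
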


\begin{proof}
Let $\phi:A\to\bar A$ be the natural quotient 
where $\bar A$ is given by the presentation
\begin{equation}\label{eq:presentation}
\tag{$*$}
\bar A = \langle x,y,z\mid x^M, y^N, z^P, x^{m}y^{n}z^{p}\rangle.
\end{equation}
The group $\bar A$ is the \emph{von Dyck group} $D(M,N,P)$. 
Remind, $D(M,N,P)$ is the index two subgroup of the group of reflection of a triangle in $\mathbb H^2$ with angles $\frac{\pi}{M},\frac{\pi}{N},\frac{\pi}{P}$, 
and can be given by the presentation
\begin{equation}\label{eq:presentation2}
\tag{$**$}
D(M,N,P) = \langle a,b,c\mid a^M, b^N, c^P, abc\rangle.
\end{equation}
In order to see that $\bar A$ is isomorphic to $D(M,N,P)$, 
note that $x^m, y^n, z^p$ are generators of $\bar A$. 
Indeed, since $m(M-2) = m(2m-1) = M(m-1)+1$, we have 
\[(x^m)^{M-2} = x^{M(m-1)+1} = x \] 
and similarly $(y^n)^{N-2}=y$ and $(z^p)^{P-2}=z$. 
By setting $a = x^m$, $b=y^n$ and $c= z^p$, and rewriting the presentation in generators $a,b,c$,
we get the presentation (\ref{eq:presentation2}).

Let $X_{\bullet}$ be the presentation complex of \eqref{eq:presentation}. 
The $1$-skeleton of $X_{\bullet}$ can be identified with $X_0$.
Let $Y_{\bullet}$ be a $2$-complex obtained from $\overline X_{\sfrac 14}$ by attaching the following $2$-cells
\begin{itemize}
\item one simple monochrome cycle with label $x^M,y^N, z^P$ respectively for each color,
\item two copies of a $2$-cell with the boundary word $x^{m}y^{n}z^{p}$. 
\end{itemize}
By Lemma~\ref{lem:phi(C) free} %or Lemma~\ref{lem:phi(C) free 3} 
(stated after this proof), $\pi_1Y_{\bullet} = \bar C$. 
In Lemma~\ref{lem:fiber all odd}, we computed the graph representing an intersection $C\cap C^g$ for $g\in A$. 
The intersection $\bar C\cap \bar C^{\bar g}$ for $\bar g\in \bar A$ can be represented by a $2$-complex obtained from that graph by attaching the $2$-cells as along all cycles with labels $x^M, y^N, z^P, x^{m}y^{n}z^{p}$. 
After attaching the $2$-cells the complex becomes contractible. 
Thus $\bar C$ is malnormal in $\bar A$.

We now show that $\beta$ projects to $\bar C$.
As in proof of Proposition~\ref{prop:triangle group quotient even}, all the $2$-cells of $Y_{\bullet}$ can be pulled back along the homotopy equivalence $X_{\sfrac1 4}\to\overline X_{\sfrac1 4}$.
See Figure~\ref{fig:2cellspulledbackodd} for the five $2$-cells that we attach to $X_{\sfrac1 4}$ and that correspond to the five $2$-cells of $Y_{\bullet}$. 
\begin{figure}[h]\includegraphics[scale=0.25]{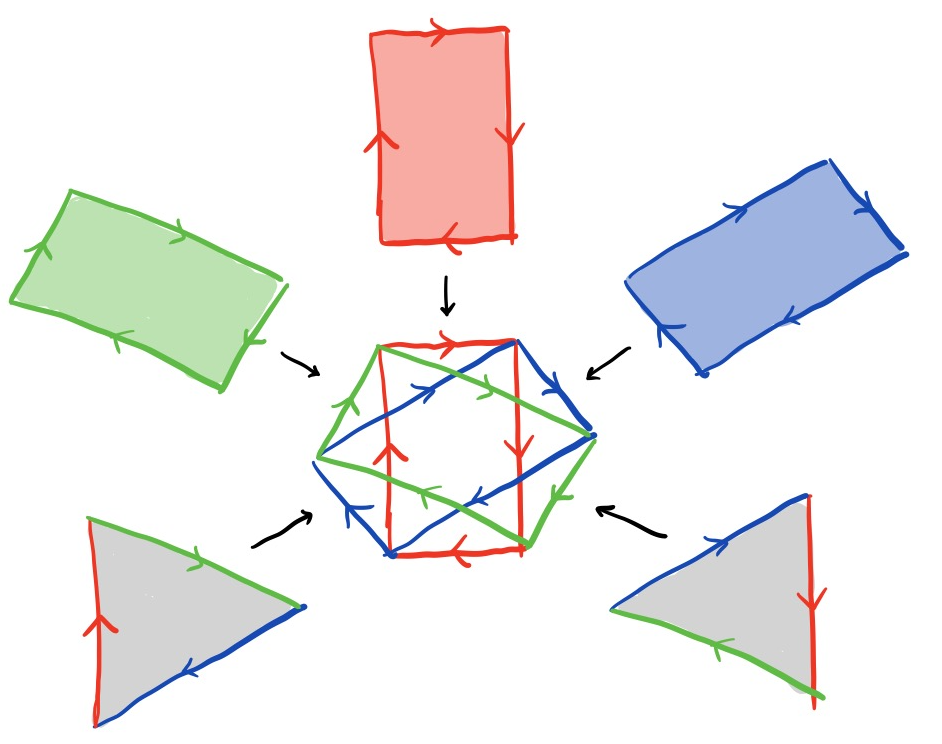}\caption{The $2$-cells in the presentation complex of $\bar A$ can be pulled back to $X_{\sfrac 14}$ in the case where $M,N,P$ are all odd. 
The new $2$-complex admits the $\pi$-rotation which represents the automorphism $\beta$. The rotation exchanges the two triangular $2$-cells and leaves other $2$-cells invariant.}\label{fig:2cellspulledbackodd}\end{figure}
Three of the $2$-cells pulled back to $X_{\sfrac 14}$ in the figure have boundary cycles 
that are denoted by the same colors as the corresponding boundary cycles 
of the corresponding $2$-cells in $\overline X_{\sfrac 14}$.
The remaining two have boundary cycles of length three and correspond to the two copies of a $2$-cell with the boundary $x^{m}y^{n}z^{p}$ in $\overline X_{\sfrac 14}$.
By Observation~\ref{obs:beta bar}, $\beta$ projects to an automorphism $\bar\beta:\bar C\to \bar C$. 

Finally, it remains to prove that $\phi$ separates $C$ from $\mathcal A$. Let $X'_{\bullet}$ be the presentation complex of (\ref{eq:presentation2}), and let $Y'_{\bullet}$ be a $2$-complex with the $1$-skeleton as in Figure~\ref{fig:von Dyck subgroup}, three monochrome $2$-cells and two with boundary word $abc$. 
\begin{figure}[h]\includegraphics[scale=0.25]{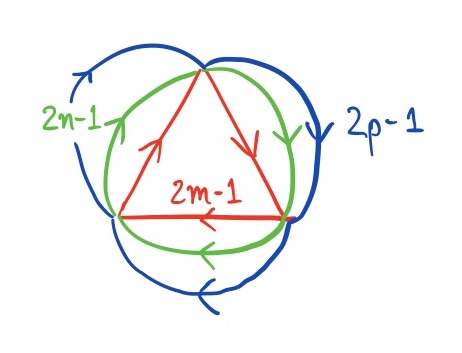}\caption{The graph $Y'$. Red arrows correspond to generator $a$, green to $b$, and blue to $c$.}\label{fig:von Dyck subgroup}\end{figure}
There is a natural immersion $\rho'_{\bullet}:Y'_{\bullet}\to X'_{\bullet}$ inducing the inclusion $\bar C\to \bar A$. 
Let $Y'$ and $X'$ be the $1$-skeleta of $Y'_{\bullet}$ and $X'_{\bullet}$ respectively, and let $\rho':Y'\to {X'}$ be the map $\rho'_{\bullet}$ restricted to the $1$-skeleta.
%The map of the $1$-skeleta $\rho':Y'\to {X'}$ where $Y'$ and $X'$ are the $1$-skeleta of $Y'_{\bullet}$ and $X'_{\bullet}$ respectively. induces 
The map $\rho'$ is an inclusion of $\pi_1 Y' \simeq F_7$ in $\pi_1 {X'}\simeq F_3$.
In terms of the original generators of $A$, we have $\pi_1 {X'} = \langle x^m, y^n, z^p\rangle$, so  this is a different inclusion $F_7\to F_3$ than $C\to A$. 
However, the image $\phi'(\mathcal A_{\rho}')\subseteq \bar A$ of the oppressive set $\mathcal A_{\rho'}$ 
with respect to ${\rho'}$ is equal to $\phi(\mathcal A_{\rho})\subseteq \bar A$. 
Indeed, all the pairs of paths $\mu_1,\mu_2$ in $Y'$ that $\rho'(\mu_1)\cdot\rho'(\mu_2)$ is a closed path are in one-to-one correspondence with such pairs of paths in $\overline X_{\sfrac 14}$ (see Figure~\ref{fig:von Dyck subgroup} for $Y'$ and Figure~\ref{fig:mapCtoA} for $\overline X_{\sfrac 14}$). 
Thus to show that $\phi$ separates $C$ from $\mathcal A_{\rho}$, 
it suffices to show $\phi'(\mathcal A_{\rho'})$ is disjoint from $\bar C$ in $\bar A$.

Let $\widetilde X'_{\bullet}$ denote the universal cover of $X'_{\bullet}$ with the $2$-cells with the same boundary identified
(i.e.\ $M$ copies of the $2$-cell whose boundary word is $a^M$ are collapsed to a single $2$-cell, and similarly with $b^N, c^P$). 
The complex $\widetilde X'_{\bullet}$ admits a metric so that that makes it isometric to $\mathbb H^2$. 
In particular, $\widetilde X'_{\bullet}$ is CAT(0). Consider the induced metric on $Y'_{\bullet}$. 
Since $\rho'_{\bullet}$ is an immersion, a lift $\widetilde Y'_\bullet \to\widetilde X'_{\bullet}$ is a local isometric embedding 
(i.e.\ every point in $\widetilde Y'_{\bullet}$ has a neighborhood 
such that the restriction of $\widetilde Y'_{\bullet}\to\widetilde X_{\bullet}'$ to that neighborhood is an isometry onto its image), 
and by \cite[Proposition II.4.14]{BridsonHaefliger}, it is an embedding. 
By Lemma~\ref{lem:embedded universal covers}, $\phi'$ separates $\pi_1Y'$ from $\mathcal A_{\rho'}$. 
This means that $\bar C$ is disjoint from $\phi'(\mathcal A_{\rho'}) = \phi(\mathcal A_{\rho})$, 
and so $\phi$ separates $C$ from $\mathcal A_{\rho}$.
\end{proof}

We will prove the last missing bit in Lemma~\ref{lem:phi(C) free}. First, we recall a version of the ping-pong lemma and its application in the hyperbolic plane, which allows us to show that certain convex subsets of $\mathbb H^2$ are disjoint.
\begin{lem}[Ping-pong Lemma]\label{lem:ping-pong}
Let a group generated by $u,v$ act on a set $\Omega$ 
and let $U_+, U_-,V_+, V_-$ be disjoint subsets of $\Omega$ 
such that
\[u(\Omega - U_-)= U_+,\]
\[v(\Omega - V_-) = V_+,\]
Then $u,v$ freely generate a free group.
\end{lem}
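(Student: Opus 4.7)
The plan is to run the standard ping-pong argument. First I would assign a ``target'' set to each of the four letters by setting $T(u) := U_+$, $T(u^{-1}) := U_-$, $T(v) := V_+$, $T(v^{-1}) := V_-$; by hypothesis these four sets are pairwise disjoint. Since $u$ is a bijection of $\Omega$ and $u(\Omega - U_-) = U_+$, applying $u^{-1}$ to both sides gives $u^{-1}(\Omega - U_+) = U_-$, and analogously for $v$. So for each letter $s \in \{u, u^{-1}, v, v^{-1}\}$ the map $s$ carries $\Omega - T(s^{-1})$ into $T(s)$.

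Next, let $w = s_1 s_2 \cdots s_n$ be an arbitrary nonempty reduced word, so $s_k \ne s_{k+1}^{-1}$ for every $k$. To prove freeness it suffices to show that $w$ is not the identity element. I would pick a letter $t \in \{u, u^{-1}, v, v^{-1}\}$ with $t \ne s_1$ and $t \ne s_n^{-1}$ (at most two letters are excluded, so such a $t$ exists) and choose a base point $p \in T(t)$. The main step is a downward induction on $k$ proving
\[
s_k s_{k+1} \cdots s_n(p) \in T(s_k) \qquad (1 \le k \le n).
\]
The base case $k=n$ uses that $p \in T(t)$ is disjoint from $T(s_n^{-1})$, so $p \notin T(s_n^{-1})$ and hence $s_n(p) \in T(s_n)$. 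For the inductive step, the word is reduced, so $s_k \ne s_{k+1}^{-1}$, meaning $T(s_k^{-1})$ and $T(s_{k+1})$ are distinct and therefore disjoint; thus the point $s_{k+1}\cdots s_n(p) \in T(s_{k+1})$ lies outside $T(s_k^{-1})$, so applying $s_k$ sends it into $T(s_k)$. Taking $k=1$ gives $w(p) \in T(s_1)$, which is disjoint from $T(t) \ni p$, so $w(p) \ne p$ and $w \ne e$.

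The argument is essentially mechanical once the targets are arranged, so I do not expect a serious obstacle; the only point that demands a little care is producing a valid base point $p$. This requires all four target sets to be nonempty, which is actually forced by the hypotheses: if for instance $U_+ = \emptyset$, then $u(\Omega - U_-) = \emptyset$ would imply $\Omega = U_-$, contradicting disjointness of $U_-$ from $V_{\pm}$ (assuming $\Omega$ is nonempty, in which case there is nothing to prove anyway). Given this, with four disjoint nonempty sets and at most two excluded choices $s_1, s_n^{-1}$, a suitable $t$ always exists, and the rest is bookkeeping.
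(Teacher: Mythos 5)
The paper states this Ping-pong Lemma as a known tool (it is used in Lemma~\ref{lem:phi(C) free}) and does not supply its own proof, so there is nothing internal to compare against. Your argument is the standard Klein/Fricke ping-pong proof and it is correct: the reformulation via target sets $T(s)$, the verification that each letter maps $\Omega - T(s^{-1})$ into $T(s)$, the choice of a base point $p \in T(t)$ with $t\notin\{s_1, s_n^{-1}\}$, and the downward induction showing $s_k\cdots s_n(p)\in T(s_k)$ all check out. One small wording nit: in the inductive step the sets $T(s_k^{-1})$ and $T(s_{k+1})$ are disjoint directly by hypothesis (they are two of the four pairwise-disjoint sets labeled by different letters); you do not need to route this through ``distinct, therefore disjoint.'' Also, the nonemptiness remark at the end is the right thing to address, though the conclusion there should be that if $\Omega=\emptyset$ the hypotheses force a vacuous situation rather than that ``there is nothing to prove''; this is a harmless edge case and does not affect the argument in the paper, where $\Omega=\Hyp$.
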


%In Section~\ref{sec:all exponents odd} we apply the ping-pong lemma 
%to certain subspaces of the hyperbolic plane $\mathbb H^2$, 
%which are obtained as intersection and union of halfplanes in $\mathbb H^2$. 
%The following lemma allows us to show that these subspaces are disjoint.

\begin{lem}\label{lem:hyperbolic lines}
Let $ABCD$ be a convex quadrangle in $\mathbb H^2$ with all internal angles $\leq \frac{\pi}2$. 
Then the lines $\overline{AB}$ and $\overline{CD}$ do not intersect 
in $\mathbb H^2\cup \partial \mathbb H^2$. \end{lem}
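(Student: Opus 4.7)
The plan is to argue by contradiction. Suppose $P \in \overline{AB} \cap \overline{CD}$ with $P \in \mathbb H^2 \cup \partial \mathbb H^2$. First I would use convexity of $ABCD$ to rule out $P$ lying on the open segments $AB$ or $CD$: since $ABCD$ is convex, the line $\overline{CD}$ has both $A$ and $B$ in the same closed half-plane, so it avoids the relative interior of segment $AB$, and symmetrically for $\overline{AB}$ and segment $CD$. Therefore $P$ must lie on one of the four open rays obtained by extending each of $\overline{AB}, \overline{CD}$ past one of its endpoints.

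Next I would use the line $\overline{BC}$ as a separator: it partitions $\mathbb H^2$ into two half-planes, and by convexity both $A$ and $D$ lie on one side, say $H$. Along $\overline{AB}$, only the ray from $B$ through $A$ (together with its extension past $A$) lies in $H$, while the opposite ray past $B$ lies in the complementary half-plane $H'$; an analogous statement holds for $\overline{CD}$ at the vertex $C$. Since $P \notin \overline{BC}$ (as $\overline{AB} \cap \overline{BC} = \{B\}$ and $B \neq C$), $P$ lies in exactly one of $H, H'$, leaving only two surviving cases: $(a)$ $P$ is past $A$ on $\overline{AB}$ and past $D$ on $\overline{CD}$, or $(b)$ $P$ is past $B$ on $\overline{AB}$ and past $C$ on $\overline{CD}$.

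In case $(a)$ I would consider the (possibly ideal) hyperbolic triangle $APD$. Since $A$ lies strictly between $B$ and $P$ on $\overline{AB}$, the ray $AP$ is opposite to the ray $AB$, so the interior angle of $APD$ at $A$ equals $\pi - \angle BAD \geq \pi/2$; similarly the angle at $D$ equals $\pi - \angle CDA \geq \pi/2$. The angle at $P$ is nonnegative (it is $0$ precisely when $P \in \partial \mathbb H^2$), so the angle sum of $APD$ is at least $\pi$, contradicting the fact that every (possibly partially ideal) hyperbolic triangle has angle sum strictly less than $\pi$. Case $(b)$ is symmetric, using the triangle $PBC$ and the corresponding inequalities at $B$ and $C$.

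The main obstacle is not technical depth but careful bookkeeping: verifying that the case analysis of the location of $P$ is exhaustive and that the triangles $APD$, $PBC$ are genuinely non-degenerate (which follows from non-degeneracy of $ABCD$, since otherwise two vertices would be forced to be collinear on one of the two lines), and checking that the strict angle-sum bound remains valid when $P$ is an ideal vertex.
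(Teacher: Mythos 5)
Your proof is correct, and it takes a genuinely different route from the paper. The paper's argument is constructive: it takes the shortest geodesic segment $p$ between the sides $AB$ and $CD$, uses the angle hypothesis to show $p$ lies inside the quadrangle and meets both sides at right angles, and then concludes that $\overline{AB}$, $\overline{CD}$ admit a common perpendicular, hence are ultraparallel. Your proof is instead a contradiction argument: you suppose an intersection point $P$ exists (possibly ideal), use convexity together with the separating line $\overline{BC}$ to pin down which two rays $P$ lies on, and then observe that the resulting triangle $APD$ (or $PBC$) would have two angles that are supplements of quadrangle angles, hence each $\geq \pi/2$, violating the Gauss--Bonnet bound $\alpha+\beta+\gamma<\pi$ for (possibly ideal) hyperbolic triangles. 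The paper's route requires the standard fact that lines in $\mathbb H^2$ with a common perpendicular do not meet even at infinity; yours requires only the angle-sum inequality, so it is slightly more elementary. The paper's proof is shorter once that fact is granted; yours spells out a more self-contained case analysis. One small slip: in arguing that $P\notin\overline{BC}$, the relevant reason is that $P\in\overline{AB}\cap\overline{BC}=\{B\}$ would force $P=B$, which you already excluded because $P$ lies on an \emph{open} ray beyond an endpoint (the aside ``$B\neq C$'' is not what is needed there). That is cosmetic; the rest of the argument is sound, including the non-degeneracy checks (collinearity of $A,P,D$ would force $D\in\overline{AB}$, contradicting that $ABCD$ is a genuine quadrangle).
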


\begin{proof}
Two lines in $\mathbb H^2$ do not intersect in $\mathbb H^2\cup \partial \mathbb H^2$ 
if and only if there exists a common perpendicular line, 
i.e.\ a line that intersects each of the two lines at angle $\frac \pi 2$.
Consider the shortest geodesic segment $p$ between segments $AB$ and $CD$. 
The segment $p$ is contained inside the closed quadrangle $ABCD$, by the assumption on the angles of $ABCD$. 
Moreover, the angles between $p$ and each of the segments $AB$, $CD$ are equal $\frac{\pi}{2}$.
This proves that the line containing $p$ is perpendicular to the lines $\overline{AB}$ and $\overline{CD}$.
%
%Let $A^{\infty}, B^{\infty}$ be the endpoints of $\overline{AB}$ in $\partial \mathbb H^2$ 
%such that $A^{\infty}, A,B, B^{\infty}$ lie on $\overline{AB}$ in the given order. 
%Similarly let $C^{\infty}, D^{\infty}$ be the endpoints of $\overline{CD}$ in $\partial \mathbb H^2$ 
%such that $C^{\infty}, C, D,D^{\infty}$ lie on $\overline{CD}$ in the given order. 
%By the assumption, $\angle DAB^\infty\leq \frac{\pi}2\leq \angle CBB^{\infty}\leq \angle DBB^\infty$. 
%By continuity, there exists a point $A'$ in the segment $AB$ 
%such that $\angle DA'B^{\infty} = \angle DA'A^{\infty} = \frac \pi 2$. 
%Similarly, there exists a point $B'$ in the segment $AB$ 
%such that $\angle CB'B^{\infty} = \angle CB'A^{\infty} = \frac \pi 2$. 
%Note that the point $B'$ must lie in the segment $A'B$ and not in $AA'$, 
%because otherwise $A', B'$ and the point of the intersection of $A'D$ and $B'C$ 
%would be a triangle with sum of the angles $>\pi$, which is impossible. 
%Consider all the lines perpendicular to $\overline{AB}$. 
%This includes the line containing segments $A'D$ and the line containing the segment $B'C$.
%By the assumption, $\angle A'DC^{\infty}\leq \frac \pi 2\leq B'CC^{\infty}$, 
%by continuity there exists a line $p$ in that collection of lines perpendicular to $\overline{AB}$
% that is also perpendicular to $\overline{CD}$.
\end{proof}

We are now ready to complete the proof of Proposition~\ref{prop:triangle group quotient}. The group $\bar C$ and the complexes $Y_{\bullet}, X_\bullet$ are as in the proof of Proposition~\ref{prop:triangle group quotient}.

%We first assume that $M,N,P\geq 5$.
\begin{lem}\label{lem:phi(C) free} Let $M,N,P\geq 5$. The group $\bar C$ in the proof of Proposition~\ref{prop:triangle group quotient} is the fundamental group of the $2$-complex $Y_{\bullet}$ %obtained from $\overline X_{\sfrac 14}$ by attaching the following $2$-cells
%\begin{itemize}
%\item one monochrome cycle of length $M,N$ or $P$ respectively for each color,
%\item two copies of a $2$-cell with the boundary $x^{m}y^{n}z^{p}$,
%\end{itemize}
and the map $Y_{\bullet}\to X_{\bullet}$ induces the inclusion of group $\bar C\to \bar A$.
In particular, $\bar C$ is a free group of rank $2$.
\end{lem}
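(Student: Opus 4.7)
The claim has three parts: the image of $\pi_1(Y_\bullet) \to \pi_1(X_\bullet) = \bar A$ equals $\bar C$, this map is injective, and the resulting group is free of rank $2$. The image is $\bar C$ immediately because the $1$-skeleton of $Y_\bullet$ is $\overline{X}_{\sfrac{1}{4}}$, which maps into $X_\bullet^{(1)} = X_0$ via $\rho$ and realizes the surjection $C \twoheadrightarrow \bar C = \phi(C)$. As preparation for the rank computation, Corollary~\ref{cor:splitting of 3 gen} gives $C \cong F_7$, so $\chi(\overline{X}_{\sfrac{1}{4}}) = -6$; attaching the three monochrome $2$-cells together with the two copies of the triangular cell $x^m y^n z^p$ yields $\chi(Y_\bullet) = -1$.

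For injectivity I would metrize $X_\bullet$ hyperbolically, realizing each $2$-cell as the appropriate hyperbolic polygon so that after identifying $2$-cells with the same boundary the universal cover is isometric to the $(M,N,P)$-triangle tessellation of $\mathbb{H}^2$; note $1/M+1/N+1/P \leq 3/5 < 1$ since $M,N,P \geq 5$, so the tessellation is hyperbolic. Pulling back gives $Y_\bullet$ a path metric and a natural map $\rho_\bullet \colon Y_\bullet \to X_\bullet$. I then verify that $\rho_\bullet$ is a locally isometric immersion by checking link conditions at each vertex of $Y_\bullet$; the delicate case is at the branching vertices in $V_{old}$ where multiple monochrome and triangular cells meet, and one must confirm the total angle there is at most $2\pi$ and that the local link embeds into the corresponding link of the ambient tessellation. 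Granting this, the lift to universal covers is a local isometric embedding of the simply-connected $\widetilde{Y_\bullet}$ into the CAT$(-1)$ space $\mathbb{H}^2$, hence a global embedding by \cite[Prop.~II.4.14]{BridsonHaefliger}. This gives $\pi_1(Y_\bullet) \hookrightarrow \bar A$ and, as a bonus, shows that $\bar C$ acts freely on $\widetilde{Y_\bullet}$ and so is torsion-free.

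For freeness and the rank I apply the ping-pong lemma inside $\mathbb{H}^2$. The plan is to pick two hyperbolic translations $u,v \in \bar C$ realized by loops generating a deformation retract of $Y_\bullet$, and to take half-planes $U_\pm, V_\pm$ bounded by hyperbolic lines through the sides of a convex quadrangle embedded in $\widetilde{Y_\bullet}$. The hypothesis $M,N,P \geq 5$ forces every interior angle of this quadrangle to be at most $\pi/2$, so Lemma~\ref{lem:hyperbolic lines} ensures the four bounding lines are pairwise disjoint in $\overline{\mathbb{H}^2}$ and the hypotheses of Lemma~\ref{lem:ping-pong} hold, producing a rank-$2$ free subgroup of $\bar C$. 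Since $\bar C$ is a torsion-free subgroup of the Fuchsian group $\bar A$ with $\chi(\bar C) = -1$, it is the fundamental group of a noncompact hyperbolic surface of Euler characteristic $-1$, necessarily a thrice-punctured sphere or once-punctured torus, both with $\pi_1 \cong F_2$; hence $\bar C = \langle u,v \rangle \cong F_2$. The main obstacle I anticipate is the link-condition verification for injectivity—describing how the monochrome and triangular $2$-cells of $Y_\bullet$ fit together around each branching vertex of $V_{old}$—together with the explicit identification of the quadrangle and of generators $u,v$ needed to apply Lemma~\ref{lem:hyperbolic lines}.
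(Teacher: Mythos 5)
Your approach genuinely diverges from the paper's, and it has a gap that you yourself flag but do not fill.

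The paper's proof is more elementary and avoids the CAT$(-1)$ machinery entirely. It first observes that, by collapsing free edges into the five $2$-cells, the complex $Y_\bullet$ deformation retracts onto a wedge of two circles based at the top vertex, with boundary words $u=x^m y^{-n}$ and $v=z^{-p} x^{m}$; hence $\pi_1 Y_\bullet \cong F_2$ directly. It then runs ping-pong in $\mathbb H^2$ on the \emph{images} $u,v \in \bar A$ (written as $ab^{-1}$ and $c^{-1}a$ via presentation $(**)$), with an explicit choice of the five lines $h_1,h_2,h_3,k_1,k_2$ in the triangle tessellation and the disjointness check via Lemma~\ref{lem:hyperbolic lines}. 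This shows $\bar C = \langle u,v\rangle$ is free of rank~$2$. Since $\pi_1 Y_\bullet\cong F_2$ already, the surjection $\pi_1 Y_\bullet\twoheadrightarrow\bar C$ is an epimorphism between rank-$2$ free groups and hence an isomorphism, giving injectivity for free. No link conditions are ever checked for $Y_\bullet$.

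Your route replaces the retraction observation with a claim that $\rho_\bullet\colon Y_\bullet\to X_\bullet$ is a local isometric embedding after hyperbolic metrization, so that $\widetilde{Y_\bullet}\hookrightarrow\mathbb H^2$ by \cite[Prop.~II.4.14]{BridsonHaefliger}. This is exactly the step you describe as the ``main obstacle,'' and rightly so: one must verify the Gromov link condition at the three branching vertices $v_r,v_g,v_b$ of $\overline X_{\sfrac14}$ (and that the links inject as full subgraphs into the corresponding links in $\widetilde{X_\bullet}$), which is nontrivial because several monochrome cells and both copies of the triangular cell meet there. The paper carefully sidesteps this: when it \emph{does} use the universal-cover embedding argument later in the proof of Proposition~\ref{prop:triangle group quotient}, it does so only after Lemma~\ref{lem:phi(C) free} is known, and only for the \emph{simpler} auxiliary complex $Y'_\bullet$ built from presentation $(**)$, not for $Y_\bullet$ itself. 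Your secondary argument -- torsion-free Fuchsian subgroup with $\chi=-1$ is $F_2$ -- is a nice alternative to ping-pong for the rank, but it too quietly assumes $Y_\bullet$ is aspherical so that $\chi(\bar C)=\chi(Y_\bullet)$; that asphericity comes back to the same unverified embedding of $\widetilde{Y_\bullet}$ into $\mathbb H^2$. In short: your plan would likely work if the link condition were carried out, but as written it leaves the crux open, whereas the missing insight (the retraction of $Y_\bullet$ onto a wedge of two circles, which makes Hopficity of $F_2$ do the work of injectivity) is what the paper uses to bypass the geometry entirely.
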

\begin{proof} 
It is clear that the $2$-cells in $X_{\bullet}$ pull back to the five $2$-cells of $Y_{\bullet}$, 
so $\bar C$ is necessarily the image of $\pi_1Y_{\bullet} $ in $\bar A$. 
By pushing free edges into the $2$-cells, we can show that the wedge based at the $a_+/b_-$ 
(the top vertex in $\overline X_{\sfrac 14}$ in Figure~\ref{fig:mapCtoA}) of two loops with boundary words $x^m y^{-n}$ and $z^{-p} x^{m}$ is a retract of $Y$. %The complex $Y$ is hom
%eomorphic to a surface with boundary, and since $\chi(Y) = 3-9+5 = -1$, we get 
In particular, $\pi_1 Y= F_2$.
In order to show that $\pi_1Y_{\bullet} = \bar C$,
we will show that $\pi_1Y_{\bullet}$ maps to a free group of rank two in $\bar A = \pi_1X_{\bullet}$. 
%
%Let the vertex $a_+/b_-$ 
%(the top vertex in $\overline X_{\sfrac 14}$ in Figure~\ref{fig:mapCtoA}) 
%be the basepoint. 
We will show that the elements $u=x^m y^{-n}$ and $v=z^{-p} x^{m}$ generate $F_2$ in $\bar A$.
In the generators $a,b,c$ of $\bar A$ as in presentation~\eqref{eq:presentation2} given above, 
we have $u = ab^{-1}$ and $v=c^{-1}a$. 
The group $\bar A$ is an index two subgroup of a reflection group 
generated by the reflection in the sides of triangle 
with angles $\frac{\pi}{2m+1}, \frac{\pi}{2n+1},\frac{\pi}{2p+1}\leq \frac {\pi}5$ in $\mathbb H^2$. 
Therefore $\bar A$ preserves the tilling of $\mathbb H^2$ with triangles with those angles. 
See Figure~\ref{fig:tiling}. 
\begin{figure}\includegraphics[scale=0.2]{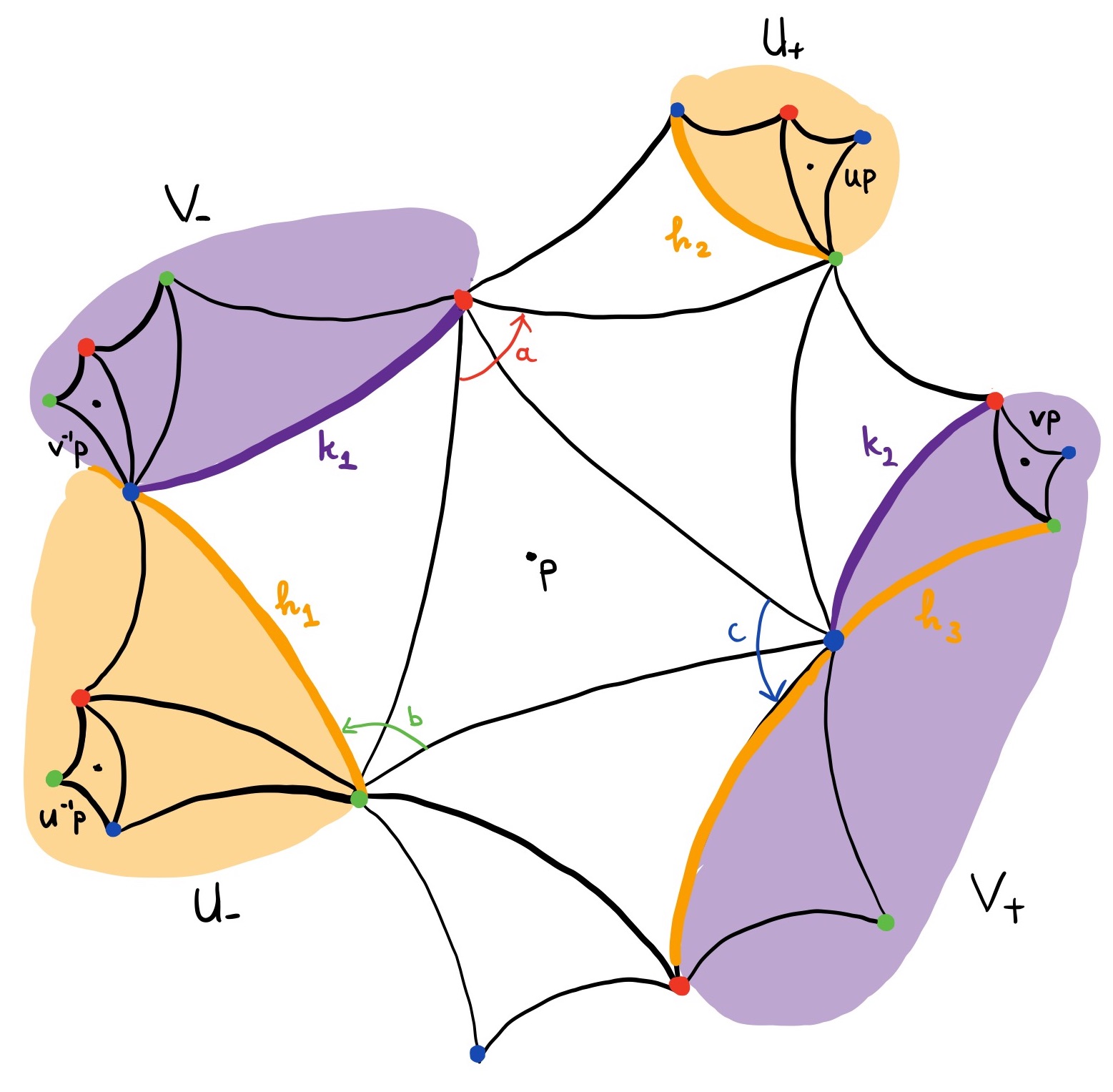}
\caption{A portion of the hyperbolic plane tilling with a triangle whose all three angles are $\frac {\pi}5$.}\label{fig:tiling}\end{figure}
We use the hyperplanes from this tiling to define subsets $U_+, U_-,V_+, V_-$ 
and apply Lemma~\ref{lem:ping-pong}. 
Let $P_a, P_b, P_c$ be three vertices of a triangle in the tiling
such that the isometry $a$ fixes $P_a$, $b$ fixes $P_b$ and $c$ fixes $P_c$.
Let $k_1$ be the line $a^{-1}(\overline{P_aP_b})$, and let $h_1$ be the line $b(\overline{P_bP_c})$. 
The lines $k_1$ and $h_1$ intersect, see Figure~\ref{fig:tiling}.
Let 
\[h_2 := uh_1 ,\]
\[k_2 := vk_1,\]
\[ h_3 := vh_1.\]
Clearly $k_2$ and $h_3$ intersect. 
We claim that no other pairs of lines among $h_1, h_2, h_3, k_1, k_2$ intersect.
Since $M,N,P\geq 5$, all angles in all triangles are $\leq \frac \pi 5$. 
For each pair of hyperplanes that we claim are disjoint, 
there exists a geodesic quadrangle with two opposite sides lying in those hyperplanes, 
and with all angles $\leq \frac {\pi}2$. 
By Lemma~\ref{lem:hyperbolic lines} such hyperplanes are disjoint. 

Let $U_+$ be the closed outward halfplane of $h_2$, 
i.e.\ the halfplane that does not contain any of $h_1, h_3, k_1, k_2$. 
Let $U_-$ be the open outward halfplane of $h_1$. 
We clearly have $u(\mathbb H^2-U_-) = U_+$. 
Now, let $V_+$ be the union of the closed outward halfplanes of $k_2$ and $h_3$ 
(i.e.\ the halfplanes not containing $h_1, k_1$ or $h_2$), 
and let $V_-$ be the intersection of the open outward halfplanes of $k_1$ 
and the open inward halfplane of $h_1$. 
We have $v(\mathbb H^2-V_-) = V_+$. 
The subspaces $U_+, U_-,V_+, V_-$ are pairwise disjoint. 
By Lemma~\ref{lem:ping-pong}, $u$ and $v$ freely generate a free group.
\end{proof}

%\begin{lem}\label{lem:phi(C) free 3} Lemma~\ref{lem:phi(C) free} also holds when $M=3$ and $N,P\geq 5$.
%\end{lem}
%\begin{proof}
%Consider the hyperplanes $h_1,h_2, h_3, k_1,k_2$ as in the proof of Lemma~\ref{lem:phi(C) free}. 
%In this case $k_1$ and $h_2$ also intersect. 
%There are no other new intersections between $h_1,h_2, h_3, k_1,k_2$. 
%Let $h_4 := vh_2$.
%See Figure~\ref{fig:tiling355}. 
%\begin{figure}\includegraphics[scale=0.2]{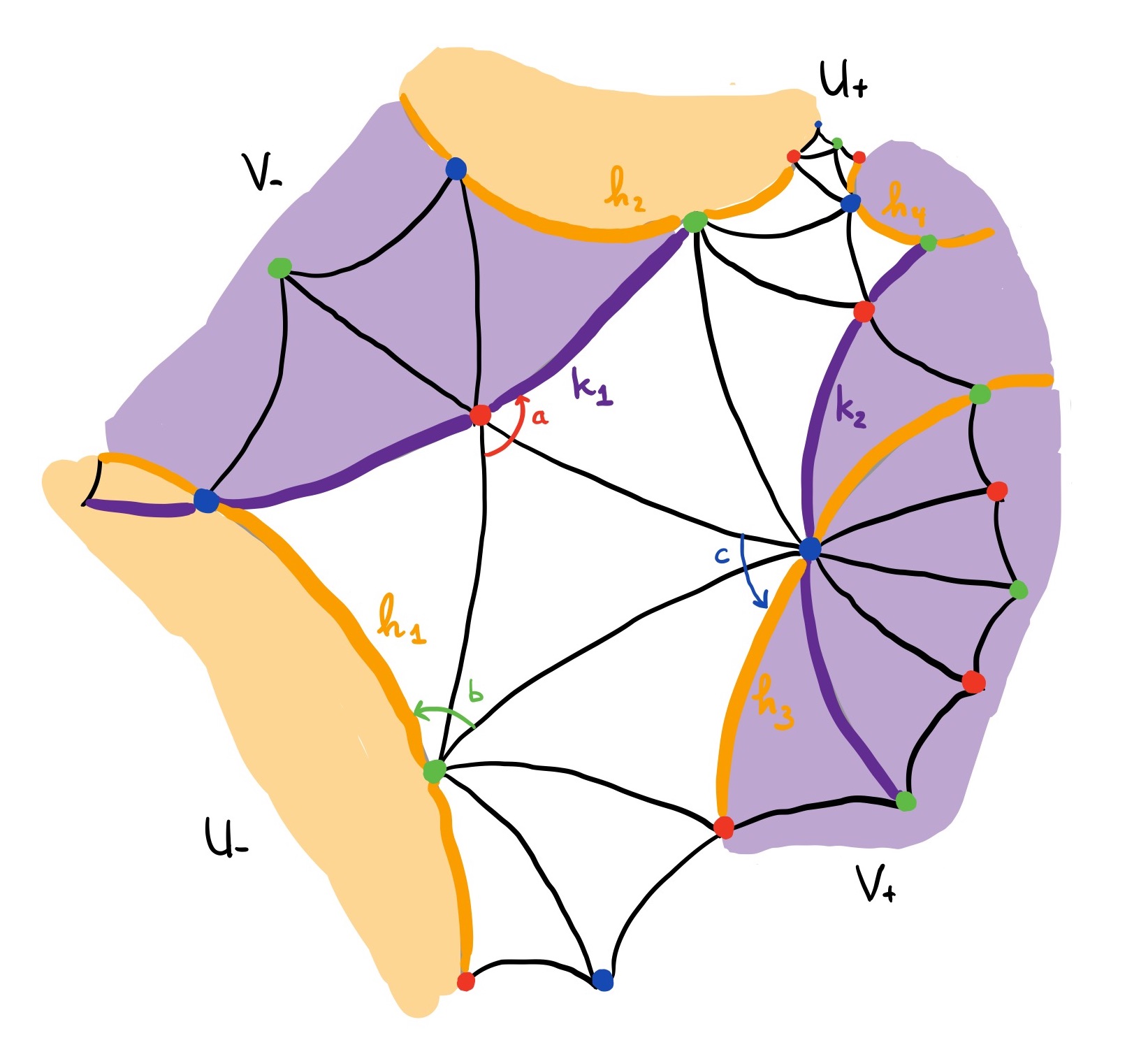}
%\caption{Tiling by triangles with angles $\frac\pi 3, \frac \pi 5, \frac \pi 5$.}\label{fig:tiling355}\end{figure}
%Similarly as before, we argue that $h_4$ only intersects $k_2$ among $h_1,h_2, h_3, k_1,k_2$.
%We set $U_+$ and $U_-$ as before. 
%We define $V_+$ as the union of the closed outward halfplanes of $k_2$, $h_3$ and $h_4$,
%and $V_-$ is the intersection of the open halfplanes: outward of $k_1$ and inward of $h_1$ and $h_2$. 
%As before we have $u(\mathbb H^2-U_-) = U_+$ and $v(\mathbb H^2-V_-) = V_+$, 
%and $U_+, U_-, V_+, V_-$ are pairwise disjoint.
%By Lemma~\ref{lem:ping-pong}, $u$ and $v$ freely generate a free group.
%\end{proof}

By Proposition~\ref{prop:triangle group quotient} and Theorem~\ref{thm:conditions for rf} we get the following.
\begin{cor}\label{cor:all odd}
If $M,N,P\geq 5$ are all odd, then the group $\Art_{MNP}$ splits as an algebraically clean graph of finite rank free groups. In particular, $\Art_{MNP}$ is residually finite.
\end{cor}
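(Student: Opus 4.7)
The plan is to deduce Corollary~\ref{cor:all odd} as a direct consequence of Proposition~\ref{prop:triangle group quotient} together with Theorem~\ref{thm:conditions for rf}, exactly as the authors did for the even case in Corollary~\ref{cor:at least one even}. By Corollary~\ref{cor:splitting of 3 gen} and Remark~\ref{rem:Artin twisted double}, $\Art_{MNP}$ contains an index two subgroup isomorphic to the twisted double $D(A,C,\beta)$, where $A,C$ are finite rank free groups and $\beta:C\to C$ is the automorphism induced by the nontrivial deck transformation of $X_{\sfrac{1}{4}}\to X_{\sfrac{1}{2}}$. Hence it suffices to prove $D(A,C,\beta)$ is residually finite, and for this I would apply Theorem~\ref{thm:conditions for rf} using the quotient $\phi:A\to \bar A$ produced by Proposition~\ref{prop:triangle group quotient}.

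The proposition already supplies items (3) and (4) of Theorem~\ref{thm:conditions for rf} verbatim, and it gives malnormality of $\bar C$ in $\bar A$. Two things need filling in. First, I would check that the von Dyck group $\bar A = D(M,N,P)$ is virtually cocompactly special. Since $M,N,P\geq 5$, we have $\tfrac{1}{M}+\tfrac{1}{N}+\tfrac{1}{P}<1$, so $\bar A$ is a cocompact Fuchsian group, hence hyperbolic and virtually a closed hyperbolic surface group; closed hyperbolic surface groups are virtually compact special by Haglund--Wise, giving condition (1). Second, I would verify quasiconvexity of $\bar C$ in $\bar A$: the proof of Proposition~\ref{prop:triangle group quotient} shows that the universal cover $\widetilde{Y'_\bullet}$ embeds in $\widetilde{X'_\bullet}\cong \mathbb{H}^2$ as a locally isometric (hence isometric, since $\mathbb{H}^2$ is CAT(0)) subspace, on which $\bar C$ acts cocompactly; so $\bar C$ stabilizes a convex subset and is therefore quasiconvex in $\bar A$.

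Once all four hypotheses of Theorem~\ref{thm:conditions for rf} are in place, the theorem concludes that $D(A,C,\beta)$ virtually splits as an algebraically clean graph of finite rank free groups, and is residually finite. Since residual finiteness is inherited by finite index overgroups, $\Art_{MNP}$ is residually finite as well, and it also virtually splits as an algebraically clean graph of finite rank free groups (passing to the intersection of the given finite index splitting subgroup of $D(A,C,\beta)$ with the index two subgroup of $\Art_{MNP}$).

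The only nonroutine obstacle in this plan is verifying that $\bar A$ is virtually special, but this reduces to the classical fact that hyperbolic triangle groups are virtually surface groups. Everything else is a matter of quoting Proposition~\ref{prop:triangle group quotient} and recalling from its proof that $\bar C$ acts cocompactly on an isometrically embedded convex subspace of $\mathbb{H}^2$; thus the corollary is essentially a packaging step, and the real work has already been carried out in Lemma~\ref{lem:fiber all odd}, Lemma~\ref{lem:phi(C) free}, and Proposition~\ref{prop:triangle group quotient}.
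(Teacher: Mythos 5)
Your proposal is correct and follows the same route as the paper: Corollary~\ref{cor:all odd} in the paper is a one-line deduction from Proposition~\ref{prop:triangle group quotient} and Theorem~\ref{thm:conditions for rf}, and what you have done is make explicit the two implicit verifications, namely that the hyperbolic von Dyck group $\bar A$ is virtually special (it is a cocompact Fuchsian group, hence virtually a closed hyperbolic surface group), and that $\bar C$ is quasiconvex in $\bar A$ (it stabilizes a convex subspace of $\mathbb{H}^2$ cocompactly, by the embedding $\widetilde{Y'_\bullet}\hookrightarrow\widetilde{X'_\bullet}\cong\mathbb{H}^2$ via~\cite[II.4.14]{BridsonHaefliger}). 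These are worth spelling out, since Proposition~\ref{prop:triangle group quotient} asserts hyperbolicity and malnormality but states neither virtual specialness nor quasiconvexity verbatim; your filling-in is accurate and in the spirit of the corresponding discussion preceding Corollary~\ref{cor:at least one even}.
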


%The above lemma does not hold for $(M,N,P)=(3,3,2p+1)$.
%Indeed, in that cases $\bar C$ is a proper quotient of $\pi_1 Y$ described above.

We finish this section with the analogous construction as in the proof of Proposition~\ref{prop:triangle group quotient} but in the case of Artin group $\Art_{333}$.
\begin{exa} 
If $(M,N,P) = (3,3,3)$ then group $C$ has index $3$ in $A$. 
%also $\bar C$ has finite index in $\bar A$. 
Let $\bar A$ is the Euclidean von Dyck group $D(3,3,3)$ obtained in the same way as in proof of Proposition~\ref{prop:triangle group quotient}.
Then the subgroup $\bar C = \mathbb Z^2$. 
Indeed, the complex $Y_{\bullet}$ has one additional $2$-cell 
whose boundary reads the third copy of the word $xyz$. 
This complex is homeomorphic to a closed surface with $\euler (Y_{\bullet}) = 3-9+6 =0$, 
so $Y_{\bullet}$ is homeomorphic to a torus.
Note that the $2$-cells of $Y_{\bullet}$ still can be pulled back to $X_{\sfrac 14}$. 
The third triangle pulls back to a hexagon, which is invariant under the graph automorphism $b$. 
Thus it is still true that $\beta$ projects to $\bar C$. 
\end{exa}

%\begin{exa} Let $M=N=3$, and $P=2p+1\geq 5$. 
%The subgroup of $\bar A = \langle a,b,c\mid a^3, b^3, c^P, abc\rangle$ 
%generated by $u = ab^{-1}$ and $v = c^{-1}a$ is not free. 
%Indeed, 
%\[
%[v,u^{-1}] = c^{-1}aba^{-1}a^{-1}cab^{-1} = c^{P-1}abacab^2 =  c^{P-2}acab^2 = c^{P-2}ab = c^{P-3}, 
%\]
%so in particular $[u,v]^P = 1$. This also shows that the quotient $\phi:A\to \bar A$ does not separate $C$ from $\mathcal A$, because $c^{P-3}\in \mathcal A$.
%\end{exa}

%%%%%%%%%%%%%%%%%%%%%%%%%%%%%%%%%%%%%%%%%%%%%%%%%%%%
\section{Residual finiteness of more general Artin groups}\label{sec:rf of other}
%%%%%%%%%%%%%%%%%%%%%%%%%%%%%%%%%%%%%%%%%%%%%%%%%%%%
The proof of residual finiteness of a three generator Artin group where at least one exponent is even, generalizes to other Artin groups.
Throughout this section $\Gamma$ is a graph admitting an admissible partial orientation,
so by Theorem~\ref{thm:decomposition} $\Art_\Gamma$ splits as 
a free product with amalgamation or an HNN extension of finite rank free groups.

\begin{thm}\label{thm:generalization at least one even}
If all the simple cycles in nontrivial connected components of $F$ are monochrome, then $\Art_{\Gamma}$ is residually finite.
\end{thm}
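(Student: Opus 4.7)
The strategy is to extend the argument of Corollary~\ref{cor:at least one even} by applying Theorem~\ref{thm:conditions for rf} (in the amalgamated case) or Theorem~\ref{thm:conditions for rf hnn} (in the HNN case) to the splitting of $\Art_\Gamma$ provided by Theorem~\ref{thm:decomposition}. The quotient $\phi:A\to\bar A$ will be the natural one onto a free product of finite cyclic groups. Specifically, for each edge $e=\{a,b\}\in E(\Gamma)$ with label $M_e$, set $\bar M_e = M_e/2$ if $M_e$ is even and $\bar M_e = M_e$ otherwise; then let
\[
\bar A = \Ast_{e\in E(\Gamma)} \Z/\bar M_e\Z,
\]
and let $\phi:A\to\bar A$ be the natural quotient. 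Realize $\bar A$ as $\pi_1 X_\bullet$, where $X_\bullet$ is obtained from $X_0$ by attaching a $2$-cell along $x_e^{\bar M_e}$ for each $e$. Let $Y_\bullet$ be obtained from $\overline X_{\sfrac{1}{4}}$ by attaching a $2$-cell along every simple monochrome cycle (each of which has length $\bar M_e$ for some $e$); the natural immersion $\rho:\overline X_{\sfrac{1}{4}}\to X_0$ extends to $\rho_\bullet:Y_\bullet\to X_\bullet$. As a free product of finite groups, $\bar A$ is virtually free, hence virtually special hyperbolic and locally quasiconvex, which takes care of condition (1) of Theorem~\ref{thm:conditions for rf} and the quasiconvexity part of condition (2).

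For \emph{malnormality} of $\bar C:=\phi(C)$, I would generalize the fiber-product calculations of Lemmas~\ref{lem:fiber all odd}--\ref{lem:fiber all even}: the intersections $\bar C\cap \bar C^{\bar g}$ for $\bar g\in\bar A$ are represented by nontrivial components of $F$ with $2$-cells attached along their simple monochrome cycles. Since by hypothesis every simple cycle in any nontrivial component of $F$ is monochrome, filling these cycles produces simply connected $2$-complexes, so every such intersection is trivial. For the \emph{separation} of $C$ from the oppressive set $\mathcal A_\rho$, I would apply Lemma~\ref{lem:embedded universal covers}: the map $\rho_\bullet$ is an immersion of $2$-complexes (any failure would yield a non-monochrome simple cycle in $F$), and the fiber-product argument above shows that the lift $\widetilde\rho_\bullet:\widetilde Y_\bullet\to\widetilde X_\bullet$ has no self-identifications, so it is an embedding. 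For condition (4), I would pull back the $2$-cells of $Y_\bullet$ along the homotopy equivalence $X_{\sfrac{1}{4}}\to\overline X_{\sfrac{1}{4}}$ to a $2$-complex on $X_{\sfrac{1}{4}}$, as in Figure~\ref{fig:2cellspulledback}; the nontrivial deck transformation of the double cover $X_{\sfrac 1 4}\to X_{\sfrac 1 2}$ permutes monochrome cycles of equal color and length and hence extends over these pulled-back cells, so Observation~\ref{obs:beta bar} produces the required $\bar\beta:\bar C\to \bar C$.

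In the bipartite-with-even-labels case, $\Art_\Gamma$ splits as an HNN extension $A*_B$ rather than as an amalgamated product, and one applies Theorem~\ref{thm:conditions for rf hnn} in place of Theorem~\ref{thm:conditions for rf}. The two edge subgroups $\bar B_1,\bar B_2$ are the images of $\pi_1$ of the two components of $X_{\sfrac 1 4}$ under $\phi$, and malnormality of the pair $\{\bar B_1,\bar B_2\}$ reduces to the same fiber-product calculation applied to all four pairings of components of $X_{\sfrac 1 4}$; separation from the relevant oppressive sets and extension of $\bar\beta$ are verified by the same $2$-complex construction. The remaining conditions go through verbatim.

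The main obstacle is the fiber-product step: without the explicit three-generator structure of Lemmas~\ref{lem:fiber all odd}--\ref{lem:fiber all even}, one must argue abstractly that the branching vertices of $F$ lie in $V_{old}\times V_{old}$ and that every segment between branching vertices is monochrome, so that every simple cycle in a nontrivial component of $F$ is either a monochrome cycle coming from $\overline X_{\sfrac 1 4}$ or — precisely what the hypothesis excludes — a more complicated polychromatic cycle. One also has to be careful around degenerate configurations of short monochrome cycles analogous to those in Figures~\ref{fig:fiber product two odd}(2) and \ref{fig:fiber product one odd}(2)--(4), to confirm that attaching $2$-cells indeed kills the fundamental group of each nontrivial component of $F$. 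Once this combinatorial point is in hand, the remaining verification of the hypotheses of Theorem~\ref{thm:conditions for rf} or Theorem~\ref{thm:conditions for rf hnn} is parallel to the three-generator case.
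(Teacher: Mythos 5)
Your proposal follows essentially the same route as the paper: the same quotient $\phi\colon A\to\bar A$ onto a free product of finite cyclic groups (halving the even labels), the same complexes $X_\bullet$ and $Y_\bullet$ obtained by attaching $2$-cells along monochrome cycles, malnormality of $\bar C$ by filling in the monochrome cycles of $F$, separation via Lemma~\ref{lem:embedded universal covers}, projection of $\beta$ via pulled-back $2$-cells and Observation~\ref{obs:beta bar}, and finally Theorem~\ref{thm:conditions for rf} or Theorem~\ref{thm:conditions for rf hnn} according to the connectivity of $X_{\sfrac 14}$.

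There is, however, one genuine gap: you assert that ``the fiber-product argument above shows that the lift $\widetilde\rho_\bullet\colon\widetilde Y_\bullet\to\widetilde X_\bullet$ has no self-identifications, so it is an embedding,'' but this does not follow. The fiber product $F=\overline X_{\sfrac 14}\otimes_{X_0}\overline X_{\sfrac 14}$ is taken over $X_0$ and, after attaching $2$-cells, controls the intersections $\bar C\cap\bar C^{\bar g}$, i.e.\ malnormality; malnormality of $\bar C$ in $\bar A$ does not by itself force $\widetilde\rho_\bullet$ to be injective, which is what Lemma~\ref{lem:embedded universal covers} actually requires. The paper supplies a separate concrete argument: replace each monochrome $2$-cell of $\widetilde X_\bullet$ with boundary word $x^{\bar M}$ by an $\bar M$-pointed star whose center corresponds to the $2$-cell; this homotopes $\widetilde X_\bullet$ to a tree, and the same replacement applies to $Y_\bullet$. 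Because each vertex of $\overline X_{\sfrac 14}$ lies in exactly one monochrome cycle of each adjacent color, the resulting map is a combinatorial immersion into a tree, hence a local isometric embedding, hence an embedding by~\cite[Proposition~II.4.14]{BridsonHaefliger}; only then does Lemma~\ref{lem:embedded universal covers} yield the separation. You would need to supply an argument of this kind (or another proof that $\widetilde\rho_\bullet$ embeds) for the separation step; the remaining hypothesis checks in your proposal are in order. A minor aside: the immersion of $2$-complexes $\rho_\bullet$ is automatic from the fact that each edge of $\overline X_{\sfrac 14}$ lies in a unique simple monochrome cycle, and does not rely on the monochromaticity hypothesis about $F$.
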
 
\begin{proof}
This proof is analogous to the proof of Proposition~\ref{prop:triangle group quotient even}. 
The quotient $\bar A$ of $A$ is obtained by adding a relation $x^{\bar M}$ for each generator $x$ of $A$ corresponding to an edge in $\Gamma$ with label $M$ and where $\bar M$ is either $\frac M2$ or $M$, depending on parity of $M$. 
Then $\bar A$ is virtually free, and $\bar C$ is free. 
The assumption that simple cycles in nontrivial connected components of $F$ are monochrome, ensures that $\bar C$ is malnormal. 
The universal cover $\widetilde X_{\bullet}$ of the Cayley $2$-complex of $\bar A$ can be homotoped to a tree by replacing each monochrome $2$-cycle corresponding to a $x^{\bar M}$ with an $\bar M$-star graph whose middle vertex corresponds to the $2$-cell and other vertices correspond to the original vertices. 
We note that the presentation $2$-complex $Y_{\bullet}$ of $\bar C$ can also be homotoped to a graph in that way. 
It follows that the map $\widetilde Y_{\bullet}\to \widetilde X_{\bullet}$ is a local isometric embedding, and consequently an embedding, by~\cite[Proposition II.4.14]{BridsonHaefliger}.
By Lemma~\ref{lem:embedded universal covers} $\phi$ separates $C$ from the oppressive set $\mathcal A$ of $C$ in $A$.
\begin{com}referees comment\end{com}
All the attached $2$-cells of $\overline X_{\sfrac 14}$ can be pulled back to $X_{\sfrac 14}$ in a way that $\beta$ projects to $\bar \beta$.
Depending on whether $\overline X_{\sfrac 14}$ is connected or not, the conclusion follows from Theorem~\ref{thm:conditions for rf} or Theorem~\ref{thm:conditions for rf hnn}.
\end{proof}

\begin{cor}
Let $\Gamma$ be a graph admitting an admissible partial orientation. 
If all labels are even and $\geq 6$, then $\Art_{\Gamma}$ is residually finite. 
\end{cor}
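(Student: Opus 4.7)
The strategy is to invoke Theorem~\ref{thm:generalization at least one even}: residual finiteness will follow once we verify that every simple cycle in every nontrivial connected component of the fiber product $F = \overline X_{\sfrac{1}{4}} \otimes_{X_0} \overline X_{\sfrac{1}{4}}$ is monochrome. Since $\Gamma$ admits an admissible partial orientation, Theorem~\ref{thm:decomposition} already gives the requisite splitting of $\Art_\Gamma$; what remains is the monochromaticity condition on $F$.

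First I would record the structural consequences of the hypothesis. Since every label has the form $M_{ab}=2m_{ab}$ with $m_{ab}\geq 3$, Description~\ref{lem:collapsing an edge} tells us that each $\overline X(a,b)_{\sfrac{1}{4}}$ is a disjoint union of two monochrome cycles of length $m_{ab}\geq 3$, and the only branching vertices of $\overline X_{\sfrac{1}{4}}$ lie in $V_{old}$. Therefore every segment of $\overline X_{\sfrac 14}$ between branching vertices is monochrome, and the branching vertices of $F$ are exactly the pairs $(v,w)\in V_{old}\times V_{old}$ whose two incident color sets in $\overline X_{\sfrac 14}$ share at least two colors. In particular, a non-monochrome simple cycle in $F$ must pass through at least two distinct branching vertices of $F$, and in a nontrivial component every vertex $(v,w)$ satisfies $v\neq w$.

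The core step, which I expect to be the main obstacle, is to show that each nontrivial component of $F$ contains at most one branching vertex, generalizing the $m,n,p>2$ case of Lemma~\ref{lem:fiber all even}. Suppose, for contradiction, that two branching vertices $(v_1,w_1)\neq (v_2,w_2)$ with $v_i\neq w_i$ lie in the same nontrivial component, joined by a path $\tilde p$ in $F$. Each monochrome segment of $\tilde p$ projects in the two coordinates to monochrome paths of equal length and color in $\overline X_{\sfrac{1}{4}}$, so consecutive pairs of branching vertices $v_i,v_{i+1}$ and $w_i,w_{i+1}$ must occupy matched positions along a common monochrome cycle. Each such matching constrains the cyclic offset between the two coordinates on that cycle. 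The hypothesis $m_{ab}\geq 3$ (i.e.\ every monochrome cycle has length at least three and admits strictly more than one internal subdivision vertex) forces the only consistent offsets to be zero, which gives $v_i=w_i$ for some $i$ and contradicts nontriviality. Admissibility of $\iota$ is used here to exclude the exceptional short cycles in $\Gamma$ that could otherwise produce the degenerate matchings seen in the excluded small-label cases.

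Granted the core step, any simple cycle in a nontrivial component of $F$ stays inside a single monochrome cycle of $\overline X_{\sfrac{1}{4}}$ and is therefore monochrome, so the hypothesis of Theorem~\ref{thm:generalization at least one even} is satisfied and $\Art_\Gamma$ is residually finite. The combinatorial verification of the core step is the delicate ingredient: it generalizes the triangle analysis of Section~\ref{sec:at least one even} and would in practice be organized as a case-by-case inspection of how monochrome cycles are glued along $V_{old}$, keyed on the local structure of $\Gamma$ at each branching vertex.
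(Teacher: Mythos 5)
Your proposal follows the paper's proof closely: both reduce to verifying the monochromaticity hypothesis of Theorem~\ref{thm:generalization at least one even} by analyzing the fiber product $F$, and both rest on the same structural observation — that each monochrome cycle in $\overline X_{\sfrac 14}$ either has a single $V_{old}$-vertex or has its two $V_{old}$-vertices at cyclic distance $1$ on a cycle of length $m\geq 3$, so a nontrivial lift to $F$ meets at most one branching vertex. You phrase the target as ``at most one branching vertex per nontrivial component'' while the paper argues per monochrome-cycle lift, but since every segment of $F$ is monochrome the two formulations are equivalent, and your offset-counting sketch is the same mechanism the paper uses.
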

\begin{proof} 
For every color with corresponding label $2m$, there are three segments of that color in $\overline X_{\sfrac 14}$, which have lengths $1,m-1,m$ respectively. 
The segments of the length $1$ and $m-1$ form one cycle and the other segment forms its own cycle.
Since the branching vertices in the fiber product $F$ are pairs of branching vertices, a lift of every monochrome cycle has exactly one branching vertex. 
It follows that all simple cycles in nontrivial connected components of $F$ are monochrome.
By Theorem~\ref{thm:generalization at least one even}, we are done.
\end{proof}

There are many more examples of graphs satisfying the assumption of Theorem~\ref{thm:generalization at least one even}.
However, in the following example, Theorem~\ref{thm:generalization at least one even} cannot be applied to any admissible partial orientation of $\Gamma$.

\begin{exa}Let $\Gamma$ be the graph on the left in Figure~\ref{exa:badfiberproduct}.
Note that every admissible partial orientation of $\Gamma$ is the same up to a permutation of the vertex labels.
The second picture in Figure~\ref{exa:badfiberproduct} is a part of the graph $X_{\sfrac 14}$.  Edges that are thickened get collapsed in $\overline X_{\sfrac 14}$, see the next graph.
Finally, on the right we have a cycle that admits two distinct combinatorial immersion to $\overline X_{\sfrac 14}$.
This yields a non monochrome simple cycle in $F$.
\begin{figure}\includegraphics[scale=0.25]{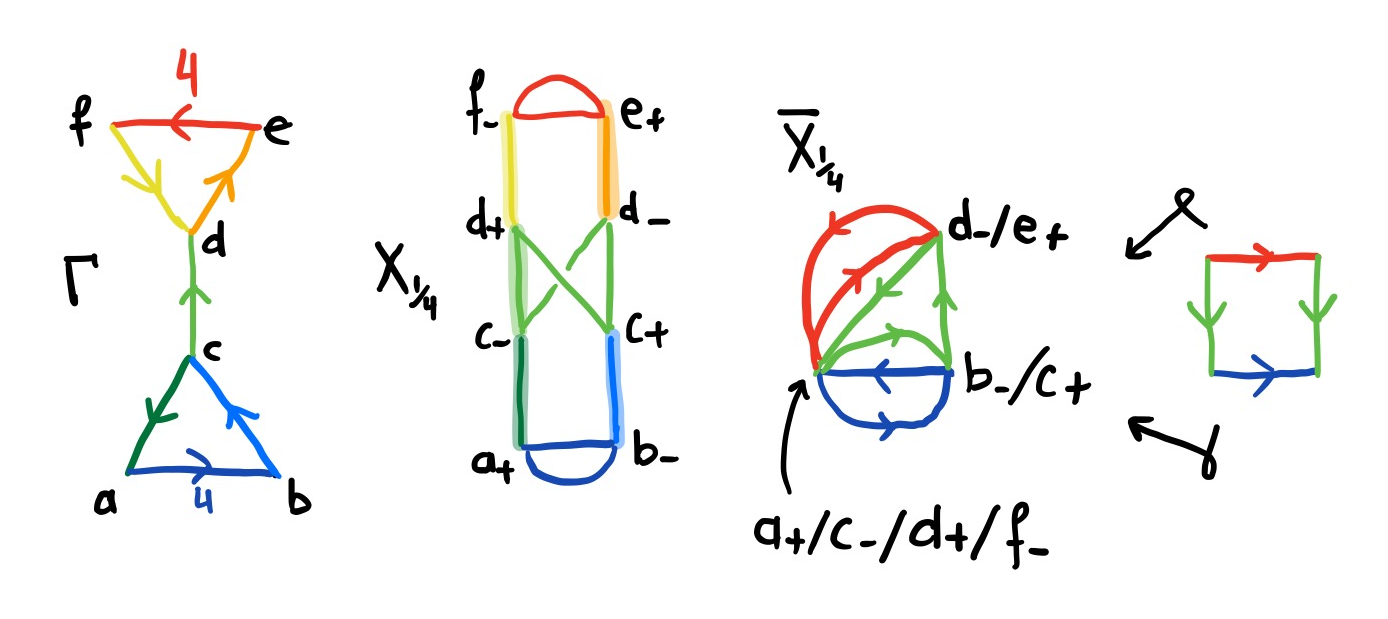}
\caption{In the above example $M_{ab} =M_{ef} = 4$ and $M_{cd}$ is odd. For each edge $e$ of $\Gamma$. The second graph is a part of $X_{\frac 14}$ and the third graph is the image of that part of $X_{\sfrac 14}$ in $\overline X_{\sfrac 14}$, which admits two different combinatorial immersions of the cycle on the right.}\label{exa:badfiberproduct}\end{figure}
\end{exa}

\bibliographystyle{alpha}
\bibliography{../../../kasia}

\end{document}